\title[Optimal transport]
{A saddle-point approach to the {M}onge-{K}antorovich optimal
transport problem}
\author{Christian L\'eonard}
\date{May 09}
\newtheorem{theorem}[equation]{Theorem}
\newtheorem{lemma}[equation]{Lemma}
\newtheorem{proposition}[equation]{Proposition}
\newtheorem{counterexamples}[equation]{Counterexamples}
\newtheorem{definition}[equation]{Definition}
\newtheorem{definitions}[equation]{Definitions}
\theoremstyle{remark}
\newtheorem{remark}[equation]{Remark}
\newtheorem{remarks}[equation]{Remarks}
\numberwithin{equation}{section}
\newcommand{\R}{\mathbb{R}}
\newcommand{\1}{\textbf{1}}
\newcommand{\dom}{\mathrm{dom\,}}
\newcommand{\icordom}{\mathrm{icordom\,}}
\newcommand{\cl}{\mathrm{cl\,}}
\newcommand{\cv}{\mathrm{cv\,}}
\newcommand{\lsc}{lower semicontinuous}
\newcommand{\supp}{\mathrm{supp}\,}
\newcommand{\boulette}[1]{$\bullet$\ Proof of #1.}
\newcommand{\Boulette}[1]{\par\medskip\noindent $\bullet$\ Proof of #1.}
\newcommand\seq[2]{(#1_#2)_{#2\ge1}}
\newcommand\Lim[1]{\lim_{#1\rightarrow\infty}}
\newcommand\Liminf[1]{\liminf_{#1\rightarrow\infty}}
\newcommand\Glim[1]{\Gamma\textrm{-}\lim_{#1\rightarrow\infty}}
\newcommand{\ttimes}{\!\times\!}
\newcommand{\diffdom}{\mathrm{dom}\partial}
\newcommand{\Uo}{\mathcal{U}_o}
\newcommand{\Yo}{\mathcal{Y}_o}
\newcommand{\Lo}{\mathcal{L}_o}
\newcommand{\Xo}{\mathcal{X}_o}
\newcommand{\UU}{\mathcal{U}}
\newcommand{\LL}{\mathcal{L}}
\newcommand{\YY}{\mathcal{Y}}
\newcommand{\XX}{\mathcal{X}}
\newcommand{\Ls}{\mathcal{L}^*}
\newcommand{\Xs}{\mathcal{X}^*}
\newcommand{\AB}{{A\!\times\! B}}
\newcommand{\PA}{\mathcal{P}_A}
\newcommand{\PB}{\mathcal{P}_B}
\newcommand{\PAB}{\mathcal{P}_{AB}}
\newcommand{\CAB}{C_{AB}}
\newcommand{\CAs}{C_A^*}
\newcommand{\CBs}{C_B^*}
\newcommand{\CABs}{\CAB^*}
\newcommand{\Pc}{\mathcal{P}_c}
\newcommand{\s}{\mathcal{S}}
\newcommand{\PS}{\mathcal{P}_{\s}}
\newcommand{\Uc}{U_{c}}
\newcommand{\Cs}{C_{|\s}}
\newcommand{\F}{\Phi}
\newcommand{\Fo}{\Phi_o}
\newcommand{\Fs}{\Phi^*}
\newcommand{\Fos}{\Phi_o^*}
\newcommand{\Fb}{\overline{\Phi}}
\newcommand{\La}{\Lambda}
\newcommand{\Las}{\Lambda^*}
\newcommand{\Lab}{\overline{\Lambda}}
\newcommand{\fog}{f\oplus g}
\newcommand{\NF}[1]{|#1|_\F}
\newcommand{\NL}[1]{|#1|_\La}
\newcommand{\Nc}[1]{\|#1\|_c}
\newcommand{\Ncs}[1]{\|#1\|_c^*}
\newcommand{\HF}{$(H_\F)$}
\newcommand{\HFi}{$(H_{\F1})$}
\newcommand{\HFii}{$(H_{\F2})$}
\newcommand{\HFiii}{$(H_{\F3})$}
\newcommand{\HT}{$(H_T)$}
\newcommand{\yx}{\langle y,x\rangle}
\newcommand{\ul}{\langle u,\ell\rangle}
\newcommand{\lt}{\tilde{\ell}}
\newcommand{\qb}{\bar{q}}
\newcommand{\ob}{\bar\omega}
\newcommand{\IAB}{\int_{\AB}}
\newcommand{\IS}{\int_{\s}}
\newcommand{\uS}{u_{|\s}}
\newcommand{\etat}{\widetilde{\eta}}
\newcommand{\etab}{\bar{\eta}}
\newcommand{\etah}{\widehat{\eta}}
\newcommand{\MK}{\textrm{M\!K}}
\newcommand{\Kb}{\overline{\textrm{K}}}
\newcommand{\Kt}{\widetilde{\textrm{K}}}
\newcommand{\mn}{(\mu,\nu)}
\newcommand{\mnc}{(\mu,\nu,c)}
\newcommand{\Pmn}{P(\mu,\nu)}
\newcommand{\Pcmn}{P(\mu,\nu,c)}
\newcommand{\Qcmn}{Q(\mu,\nu,c)}
\begin{document}


 \address{Modal-X, Universit\'e Paris Ouest. B\^at. G, 200 av. de la R\'epublique. 92001 Nanterre, France}
 \email{christian.leonard@u-paris10.fr}
 \keywords{Convex optimization, saddle-point, conjugate duality, optimal transport}
 \subjclass[2000]{46N10, 49J45, 28A35}

\begin{abstract}
The Monge-Kantorovich problem is revisited by means of a variant
of the saddle-point method without appealing to $c$-conjugates. A
new abstract characterization of the optimal plans is obtained in
the case where the cost function takes infinite values. It leads
us to new explicit sufficient and necessary optimality conditions.
As by-products, we obtain a new proof of the well-known
Kantorovich dual equality and an improvement of the convergence of
the minimizing sequences.
\end{abstract}

\maketitle

\tableofcontents


\section{Introduction}\label{sec:introduction}

The Monge-Kantorovich problem is revisited by means of a variant
of the saddle-point method derived in \cite{Leo07a}, without
appealing to $c$-conjugates. A new abstract characterization of
the optimal plans is obtained (Theorem \ref{res-20}) in the case
where the cost function takes infinite values. It leads us to new
explicit sufficient and necessary conditions of optimality which
are stated in Theorems \ref{res-05} and \ref{res-73}. As
by-products, we obtain a new proof of the well-known Kantorovich
dual equality and an improvement of the convergence of the
minimizing sequences.

\subsection*{The Monge-Kantorovich transport problem}

Let us take $A$ and $B$ two Polish  (separable complete metric)
spaces furnished with their respective Borel $\sigma$-fields, a
\lsc\ (cost) function $c:\AB \to [0,\infty]$ which may take
infinite values and two probability measures $\mu\in\PA$ and
$\nu\in\PB$ on $A$ and $B.$ We denote $\PA, \PB$ and $\PAB$ the
sets of all Borel probability measures on $A,$ $B$ and $\AB.$ The
Monge-Kantorovich problem is
\begin{equation}\label{MK}
    \textsl{minimize } \pi\in\PAB\mapsto \IAB c(a,b)\,\pi(dadb)
    \textsl{ subject to }\pi\in \Pmn \tag{\MK}
\end{equation}
where $\Pmn$ is the set of all $\pi\in\PAB$ with prescribed
marginals $\pi_A=\mu$ on $A$ and $\pi_B=\nu$ on $B.$ Note that $c$
is measurable since it is \lsc\ and the integral $\IAB
c\,d\pi\in[0,\infty]$ is well-defined since $c\geq 0.$
\\
For a general account on this active field of research, see the
books of S.~Rachev and L.~R\"uschendorf \cite{RacRus} and
C.~Villani \cite{Vill03,Vill09}.
\\
The subset $\{c=\infty\}$ of $\AB$ is a set of forbidden
transitions. Optimal transport on the Wiener space
\cite{FeyUstu04a} and on configuration spaces \cite{p-Dec06,
DJS08} provide natural infinite dimensional settings where $c$
takes infinite values.

Let us denote $C_A,$ $C_B$ and $\CAB$ the spaces of all continuous
bounded functions on $A,$ $B$ and $\AB.$ The Kantorovich
maximization problem:
\begin{equation}\label{K}
\left\{
\begin{split}
   &\textsl{maximize }\int_A f\,d\mu+\int_B g\,d\nu \textsl{ for all
    } f, g \textsl{ such that }\\
    & f\in C_A, g\in C_B\textsl{ and } f\oplus g\leq
    c\\
\end{split}
\right. \tag{K}
\end{equation}
is the basic dual problem of \eqref{MK}. Here and below, we denote
$ f\oplus g(a,b)= f(a)+ g(b).$ Under our assumptions, we have
\begin{equation}
    \inf\eqref{MK}=\sup\eqref{K}\in [0,\infty]
    \label{eq-04}
    \end{equation}
which is called the Kantorovich dual equality. For a proof of this
well known result, see \cite[Thm 5.10]{Vill09} for instance.

\begin{definitions}[Plans]\label{def-02}\
\begin{enumerate}
    \item Any probability measure in $\Pmn$ is called a transport plan, or
    shorter: a \emph{plan}.
    \item One says that  $\pi\in \Pmn$ is a \emph{finite plan} if
$\IAB c\,d\pi<\infty.$ The set of all finite plans is denoted by
$\Pcmn.$
    \item One says that  $\pi$ is an \emph{optimal plan} if it is
    a finite plan and it
minimizes $\gamma\mapsto\IAB c\,d\gamma$ on $\Pmn.$
\end{enumerate}
\end{definitions}

It is well-known that there exists at least an optimal plan if and
only if $\Pcmn$ is nonempty; this will be found again in Theorem
\ref{res-MK}. Definition \ref{def-02}-3 throws away the
uninteresting case where $\IAB c\,d\pi=\infty$ for all $\pi\in
\Pmn.$
\\
Since it is a convex but not a \emph{strictly} convex problem,
infinitely many optimal plans may exist.

Recently, M.~Beiglb\"ock, M.~Goldstern, G.~Maresh,
W.~Schachermayer and J.~Teichman \cite{ST09,BGMS08,BS09} have
improved previous optimality criteria in several directions.
Before stating some of their results, let us introduce the notion
of strongly $c$-cyclically monotone plan.

Clearly, there is no reason for the dual equality \eqref{eq-04} to
be attained at \emph{continuous} functions $f$ and $g.$ Suppose
instead that there exist two $[-\infty,\infty)$-valued integrable
functions $f\in \mathcal{L}^1(\mu)$ and $g\in\mathcal{L}^1(\nu)$
such that $\fog\le c$ everywhere and $\sup\{\int_A u\,d\mu+\int_B
v\,d\nu; u\in\mathcal{L}^1(\mu),v\in\mathcal{L}^1(\nu),u\oplus
v\le c\}=\int_A f\,d\mu+\int_B g\,d\nu.$ Let $\pi$ be an optimal
plan. We have
$$\sup\eqref{K} \le
\sup_{u\in\mathcal{L}^1(\mu),v\in\mathcal{L}^1(\nu),u\oplus v\le
c}\int_A u\,d\mu+\int_B v\,d\nu=\int \fog\,d\pi\le\int
c\,d\pi=\inf\eqref{MK}$$ and by \eqref{eq-04}, this is a series of
equalities. In particular, the couple $(f,g)$ is an
\emph{integrable dual optimizer}, $\int (c-\fog)\,d\pi=0$ and
since $c-\fog\ge0,$ we see that $c=\fog,$ $\pi$-almost everywhere.
This leads naturally to the following notion.

\begin{definition}[Strongly $c$-cyclically monotone plan,  \cite{ST09}]\label{def-04}
A transport plan $\pi\in \Pmn$ is called \emph{strongly
$c$-cyclically monotone} if there exist $[-\infty,\infty)$-valued
measurable  functions $ f$ on $A$ and
    $ g$ on  $B$  such that
    \begin{equation}\label{eq-75}
    \left\{\begin{array}{l}
       f\oplus g\leq c\quad \textrm{everywhere} \\
        f\oplus g= c\quad \pi\textrm{-almost everywhere.} \\
    \end{array}\right.
    \end{equation}
\end{definition}

In the whole paper, measurable functions and sets are intended to
be Borel measurable.

Note that it is not required in this definition that $f$ and $g$
are integrable. In fact, one can find examples where there is an
optimal plan but no integrable dual optimizer, see \cite[Example
4.5]{BS09}. Without integrability, one cannot write $\int_A
f\,d\mu+\int_B g\,d\nu$ anymore. Nevertheless, the following
result allows us to extend the notion of dual optimizer to
measurable functions $(f,g)$ as in Definition \ref{def-04}.

\begin{lemma}\cite[Lemma 1.1]{BS09}\label{res-03}
Let $c:\AB\to[0,\infty]$ be measurable, $f$ and $g$ be
$[-\infty,\infty)$-valued measurable functions on $A$ and  $B$
respectively such that $f\oplus g\le c$ everywhere. Then, for any
$\pi,\tilde\pi\in\Pcmn,$
\begin{equation*}
    \IAB f\oplus g\,d\pi=\IAB f\oplus g\,d\tilde\pi\in[-\infty,\infty).
\end{equation*}
\end{lemma}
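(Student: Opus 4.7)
The plan is to reduce the equality to a marginal computation via a below-truncation of $f$ and $g$, and then pass to the limit.

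Since $f\oplus g\le c$ and $\int c\,d\pi<\infty$, the positive part $(f\oplus g)^+\le c$ is $\pi$-integrable, so $\int(f\oplus g)\,d\pi\in[-\infty,\infty)$ is well-defined; the same holds for $\tilde\pi$. First, the case $\mu(\{f=-\infty\})>0$ is trivial: by the marginal condition both $\pi$ and $\tilde\pi$ assign the same mass $\mu(\{f=-\infty\})>0$ to $\{f=-\infty\}\times B$, on which $f\oplus g\equiv-\infty$; combined with the integrability of $(f\oplus g)^+$, this forces both integrals to $-\infty$. The symmetric case $\nu(\{g=-\infty\})>0$ is similar, so one may assume $f,g$ are finite $\mu$- and $\nu$-a.e.

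Next, set $f_n:=f\vee(-n)$ and $g_n:=g\vee(-n)$, which are bounded below by $-n$. By Fubini--Tonelli,
\[
 \int_{\AB}(f_n\oplus g_n)\,d\pi=\int_A f_n\,d\mu+\int_B g_n\,d\nu\in[-2n,+\infty],
\]
and the right-hand side depends only on the marginals. On the full-measure set where $f,g$ are finite, $f_n\oplus g_n-(f\oplus g)=(f+n)^-+(g+n)^-\ge 0$, and integrating this identity yields
\[
 \int_{\AB}(f\oplus g)\,d\pi=\int_A f_n\,d\mu+\int_B g_n\,d\nu-K_n,\qquad K_n:=\int_A(f+n)^-\,d\mu+\int_B(g+n)^-\,d\nu,
\]
\emph{provided} the subtraction is not of the form $\infty-\infty$. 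When $K_n<\infty$, i.e.\ as soon as $\int f^-\,d\mu+\int g^-\,d\nu<\infty$, the right-hand side is unambiguous and manifestly marginal-dependent, which already proves the lemma.

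The main obstacle is the degenerate case $K_n=\infty$ for every $n$, equivalently $\int f^-\,d\mu=\infty$ or $\int g^-\,d\nu=\infty$: here the displayed identity is $\infty-\infty$ and a more delicate argument is required. The approach is to combine the pointwise inequalities $(f\oplus g)^-\ge f^--g^+$ and $(f\oplus g)^-\ge g^--f^+$ (both verified by a short case-by-case check) with the integral bounds $\int f^+\,d\mu\le\int c\,d\pi+\int g^-\,d\nu$ and $\int g^+\,d\nu\le\int c\,d\pi+\int f^-\,d\mu$ obtained by integrating the hypothesis $f\oplus g\le c$, and then carry out a case analysis over the finiteness of $\int f^\pm\,d\mu,\int g^\pm\,d\nu$. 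In most sub-cases this forces $\int(f\oplus g)^-\,d\pi=\infty$ uniformly over $\Pcmn$, so that $\int(f\oplus g)\,d\pi=-\infty$ regardless of $\pi$; the remaining ``balanced'' configurations either collapse $\Pcmn$ to an essentially unique plan (making the identity trivial) or can be handled by asymmetric truncations that preserve the integrability needed for the limit passage. This residual case analysis is the technically subtle portion of the argument.
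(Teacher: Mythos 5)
The paper itself does not prove this lemma; it imports it verbatim from \cite[Lemma 1.1]{BS09}, so your attempt has to stand on its own. The first two stages of your argument are correct: disposing of $\mu(\{f=-\infty\})>0$, reducing to $f,g$ finite a.e., the marginal identity $\int f_n\oplus g_n\,d\pi=\int_A f_n\,d\mu+\int_B g_n\,d\nu$ for the below-truncations, the pointwise bound $(f\oplus g)^-+g^+\ge f^-$, the integral bound $\int_A f^+\,d\mu\le\int c\,d\pi+\int_B g^-\,d\nu$, and the conclusion that when $K_n<\infty$ the integral is finite and marginal-dependent, while if exactly one of $\int f^-\,d\mu$, $\int g^-\,d\nu$ is infinite and the complementary positive part is integrable, every finite plan gives $-\infty$. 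All of that is sound.

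The gap is the remaining ``balanced'' configuration, and it is a genuine gap rather than a residual case you can wave away. Concretely, take $\int_A f^-\,d\mu=\infty$ and $\int_B g^+\,d\nu=\infty$ simultaneously. Here $\int f_n\,d\mu+\int g_n\,d\nu-K_n$ is $\infty-\infty$, and neither of your two proposed remedies is substantiated. The claim that $\Pcmn$ ``collapses to an essentially unique plan'' is simply false: with $A=B=\{1,2,\dots\}$, $\mu=\nu$, $\mu(\{n\})=2^{-n}$, $f(n)=-4^n$, $g(n)=4^n$ and $c(m,n)=|4^n-4^m|$, the configuration is balanced ($\int f^-\,d\mu=\int g^+\,d\nu=\infty$, $\int f^+\,d\mu=\int g^-\,d\nu=0$), both the diagonal coupling and the coupling swapping the atoms at $1$ and $2$ lie in $\Pcmn$, and the integrals are finite (both equal $0$). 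The ``asymmetric truncation'' $\varphi_{m,n}=(f\vee(-m))\oplus(g\wedge n)$ does not rescue the limit passage either: one computes $\varphi_{m,n}-f\oplus g=(f+m)^-\oplus 0-0\oplus(g-n)^+$, and in the balanced case $\int_A(f+m)^-\,d\mu=\infty$ and $\int_B(g-n)^+\,d\nu=\infty$ for \emph{every} $m,n$, so the correction term is again $\infty-\infty$ and there is no control on the error. This case needs a different idea (this is in fact where the real content of \cite[Lemma 1.1]{BS09} lies), and until it is supplied the proof is incomplete.
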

With this lemma in hand, we are allowed to denote
\begin{equation}\label{eq-05}
    \IAB f\oplus g\,d\mnc:=\IAB f\oplus g\,d\pi,\quad\pi\in\Pcmn
\end{equation}
this common value.  A natural extension of the dual problem
\eqref{K} is

\begin{equation}\label{Kt}
     \left\{
   \begin{split}
     &\textsl{maximize } \IAB \fog\,d\mnc
     \textsl{ for all } f\in [-\infty,\infty)^{A},
     g\in[-\infty,\infty)^{B}\\
     &f,g  \textsl{ measurable such that } \fog\leq c \textsl{ everywhere.}
    \end{split}
    \right.\tag{$\Kt$}
\end{equation}
Of course, $\sup\eqref{K}\le\sup\eqref{Kt}\le\inf\eqref{MK}$ so
that \eqref{eq-04} implies
\begin{equation}
    \sup\eqref{Kt}=\inf\eqref{MK}.
    \label{eq-06}
\end{equation}
A couple of functions $(f,g)$ as in Definition  \ref{def-04} is
clearly an optimizer of \eqref{Kt}. We call it a \emph{measurable
dual optimizer}.

Some usual optimality criteria are expressed in terms of cyclical
$c$-monotonicity.
\begin{definition}[$c$-cyclically monotone plan]
A subset $\Gamma\subset\AB$ is said to be \emph{$c$-cyclically
monotone} if for any integer $n\geq 1$ and any family
$(a_1,b_1),\dots,(a_n,b_n)$ of points in $\Gamma,$
 $
    \sum_{i=1}^n c(a_i,b_i)\leq \sum_{i=1}^n c(a_i,b_{i+1})
 $
with the convention $b_{n+1}=b_1.$
\\
A probability measure $\pi\in\PAB$ is said to be $c$-cyclically
monotone if it is concentrated on a measurable $c$-cyclically
monotone set $\Gamma,$ i.e. $\pi(\Gamma)=1.$
\end{definition}
This notion goes back to the seminal paper \cite{Rus96} by
L.~R\"uschendorf where the standard cyclical monotonicity of
convex functions introduced by Rockafellar has been extended in
view of solving the Monge-Kantorovich problem.
\\
One easily shows that a strongly $c$-cyclically monotone plan is
$c$-cyclically monotone.

The main results of \cite{ST09,BGMS08,BS09} are collected in the
next two theorems.

\begin{theorem}[\cite{BGMS08,BS09}]\label{res-15}
Let $c$ be a measurable nonnegative function such that
\begin{equation}\label{eq-76}
    \mu\otimes\nu(\{c< \infty\})=1.
    \end{equation}
If there exists some $\pi^o\in \Pmn$ such that $\IAB
c\,d\pi^o<\infty,$ then \eqref{eq-06} holds true and for any
$\pi\in \Pmn,$ the following three statements are equivalent:
\begin{enumerate}[(i)]
    \item $\pi$ is an optimal plan;
    \item $\pi$ is $c$-cyclically monotone;
    \item $\pi$ is strongly $c$-cyclically monotone.
\end{enumerate}
\end{theorem}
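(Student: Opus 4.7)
The plan is to establish the cyclic chain of implications (iii)$\Rightarrow$(i)$\Rightarrow$(ii)$\Rightarrow$(iii), from which the duality (\ref{eq-06}) then follows as a by-product.

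The implication (iii)$\Rightarrow$(i) is essentially contained, read backwards, in the paragraph preceding Definition~\ref{def-04}. Given witnesses $(f,g)$ of strong $c$-cyclical monotonicity for $\pi$, one first verifies $\pi\in \Pcmn$ using the equality $c=\fog$ $\pi$-a.e.\ together with the finite plan $\pi^o$; then for every competitor $\tilde\pi\in \Pcmn$, Lemma~\ref{res-03} combined with $\fog\le c$ yields
\begin{equation*}
\IAB c\,d\pi = \IAB \fog\,d\pi = \IAB \fog\,d\tilde\pi \le \IAB c\,d\tilde\pi,
\end{equation*}
which proves optimality.

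For (i)$\Rightarrow$(ii) I would use the classical R\"uschendorf perturbation argument. Assuming $\pi$ is optimal but not $c$-cyclically monotone, for some $n\ge 2$ the measurable set
\begin{equation*}
D_n = \Bigl\{\bigl((a_i,b_i)\bigr)_{i=1}^n \in (\AB)^n : \sum_{i=1}^n c(a_i,b_i) > \sum_{i=1}^n c(a_i,b_{i+1})\Bigr\}
\end{equation*}
has positive $\pi^{\otimes n}$-measure. A measurable rerouting of a small amount of mass along the cycles $(a_i,b_i)\mapsto (a_i,b_{i+1})$ produced by disintegrating $\pi^{\otimes n}\!\!\restriction_{D_n}$, while carefully preserving the marginals $\mu$ and $\nu$, then yields a competing plan with strictly smaller total cost, contradicting optimality.

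The delicate step is (ii)$\Rightarrow$(iii). Given a measurable $c$-cyclically monotone set $\Gamma$ with $\pi(\Gamma)=1$, fix a base point $(a_0,b_0)\in\Gamma$ with $c(a_0,b_0)<\infty$ and define the Rockafellar-type potential $f:A\to[-\infty,\infty]$ as the supremum, over finite chains $(a_1,b_1),\dots,(a_n,b_n)$ in $\Gamma$, of the telescoping expression
\begin{equation*}
\sum_{i=0}^{n-1}\bigl[c(a_{i+1},b_i)-c(a_i,b_i)\bigr] + c(a,b_n)-c(a_n,b_n),
\end{equation*}
together with its $c$-dual $g(b)=\inf_{a\in A}\{c(a,b)-f(a)\}$. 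The $c$-cyclical monotonicity of $\Gamma$ forces $f(a_0)\le 0$ and $\fog=c$ on $\Gamma$, while the inequality $\fog\le c$ everywhere is built into the definition of $g$. The hard part will be ensuring $f>-\infty$ $\mu$-a.e.\ and $g>-\infty$ $\nu$-a.e., for otherwise $(f,g)$ would not qualify as witnesses: this is precisely where the hypothesis $\mu\otimes\nu(\{c<\infty\})=1$ enters, since it guarantees that $\mu$-typical $a$'s can be connected to $\nu$-typical $b$'s through finite-cost transitions, preventing the chain supremum from diverging. A separate, more routine concern is the measurability of $f$: the sup is taken over an uncountable collection, and one passes through analytic sets (Jankov-von Neumann selection) before modifying $f$ on a $\mu$-null set to get a Borel representative.

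Finally, the Kantorovich equality (\ref{eq-06}) comes as a by-product. An optimal plan $\pi$ exists by lower semicontinuity of $c$, compactness of $\Pmn$ in the weak topology (Prokhorov), and the assumed nonemptiness of $\Pcmn$; applying (i)$\Rightarrow$(iii) to this $\pi$ furnishes a measurable dual optimizer $(f,g)$ for (\ref{Kt}) satisfying $\IAB \fog\,d\mnc = \IAB c\,d\pi = \inf\eqref{MK}$, whence $\sup\eqref{Kt}=\inf\eqref{MK}$.
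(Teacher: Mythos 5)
The statement you are asked to prove is quoted from \cite{BGMS08,BS09}; the paper itself offers no proof of Theorem~\ref{res-15}, only of the related implication (iii)$\Rightarrow$(i) for \emph{finite} plans via the saddle-point machinery (Proposition~\ref{res-62}: an $\omega_o$ is built on the line spanned by $(\mu,\nu)$, extended by a Hahn--Banach argument for extended-valued sublinear functions, and fed into Theorem~\ref{res-20}). Your (iii)$\Rightarrow$(i) is instead the direct integration argument of \cite{ST09}, and your (i)$\Rightarrow$(ii), (ii)$\Rightarrow$(iii) follow the R\"uschendorf perturbation and potential constructions; so where your proposal overlaps with something the paper actually proves, it takes a genuinely different route, and where it does not overlap, there is nothing in the paper to compare against.

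Within your sketch there is one step that would fail as written and one that is glossed over. The failing step is the closing existence argument: you invoke lower semicontinuity of $c$ and Prokhorov compactness to produce an optimal plan, but in Theorem~\ref{res-15} $c$ is only \emph{Borel measurable}, not \lsc\ --- indeed, Counterexample~\ref{res-06}-1(a) is precisely a non-\lsc\ $c$ with no optimal plan. Under \eqref{eq-76} the order in \cite{BGMS08} is reversed: one first manufactures a $c$-cyclically monotone plan in $\Pcmn$ (weak cluster points of near-optimal plans are $c$-cyclically monotone even when they are not obviously optimal, and finiteness is checked separately), and only then applies (ii)$\Rightarrow$(iii)$\Rightarrow$(i) to conclude optimality and the dual equality \eqref{eq-06}. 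The glossed-over step is ``one first verifies $\pi\in\Pcmn$'' in (iii)$\Rightarrow$(i): Lemma~\ref{res-03} as stated presupposes that \emph{both} plans are already in $\Pcmn$, so it cannot be applied directly to deduce $\int c\,d\pi=\int\fog\,d\pi<\infty$ from the finiteness of $\pi^o$; one needs the truncation argument of \cite{ST09} (or a one-sided variant of Lemma~\ref{res-03}) to first control $\int(\fog)^+\,d\pi$ by $\int c\,d\pi^o$. Finally, a cosmetic point: the potential should be defined as an infimum of the telescoping chain sums (or with reversed signs) so that $c$-cyclical monotonicity gives $f(a')-f(a)\le c(a',b)-c(a,b)$ on $\Gamma$ and hence $g(b)=\inf_{a'}\{c(a',b)-f(a')\}=c(a,b)-f(a)$ there; with the supremum as written the inequality runs the wrong way.
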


This result is valid under a very weak regularity condition on $c$
which is only assumed to be Borel measurable. It was first proved
in \cite{ST09} under the assumption that $c$ is a \lsc\
nonnegative finitely-valued function. However, condition
(\ref{eq-76}) is close to the assumption that $c$ is finitely
valued.

The next result is concerned with cost functions $c$ which may
take infinite values.
\begin{theorem}[\cite{AP02,ST09,BGMS08}]\label{res-75}
Let $c$ be a \lsc\ $[0,\infty]$-valued function.
\begin{enumerate}[(a)]
    \item Any optimal plan is  $c$-cyclically monotone.
    \item If there exists some $\pi^o\in \Pmn$ such that $\IAB
    c\,d\pi^o<\infty,$ then any strongly $c$-cyclically monotone plan in $\Pmn$
    is an optimal plan.
\end{enumerate}
Let $c$ be a measurable $[0,\infty]$-valued function.
\begin{enumerate}[(c)]
    \item Every finite $c$-cyclically monotone transport plan is optimal if
there exist a closed set F and a $\mu\otimes\nu$-null set N such
that $\{c=\infty\}=F\cup N.$
\end{enumerate}
\end{theorem}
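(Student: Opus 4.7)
I would handle the three parts in the order (b), (a), (c): part (b) is a direct verification, part (a) is the classical cyclical-rearrangement argument adapted to allow $c=\infty$, and part (c) reduces to an explicit construction of dual potentials, after which (b) supplies optimality. For (b), let $\pi$ be strongly $c$-cyclically monotone with witnesses $f,g$ as in \eqref{eq-75}, and let $\pi^o\in\Pcmn$. Since $(\fog)^+\le c$ everywhere and $\pi^o\in\Pcmn$, the common value $\IAB\fog\,d\mnc$ produced by Lemma~\ref{res-03} lies in $[-\infty,\infty)$. For any $\tilde\pi\in\Pmn$, one has $\IAB c\,d\tilde\pi\ge\IAB\fog\,d\mnc$: trivially when $\tilde\pi\notin\Pcmn$ (as $\IAB c\,d\tilde\pi=\infty$), and by Lemma~\ref{res-03} applied to $\tilde\pi$ otherwise. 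The identity $c=\fog$ $\pi$-a.e.\ combined with $f,g<\infty$ gives $\pi(\{c=\infty\})=0$ and $\IAB c\,d\pi=\IAB(\fog)^+\,d\pi\le\int_A f^+\,d\mu+\int_B g^+\,d\nu$; using the pointwise bounds $f(a)+g(b)\le c(a,b)$ integrated against sections of $\pi^o$, the right-hand side is finite, so $\pi\in\Pcmn$ and Lemma~\ref{res-03} gives $\IAB c\,d\pi=\IAB\fog\,d\mnc$, proving optimality.

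\textbf{Proof of (a).} By contradiction: suppose $\pi$ is optimal but is not concentrated on any $c$-cyclically monotone Borel set. Then for some $n\ge 2$ the Borel set
\[
  \Gamma_n := \Big\{\big((a_i,b_i)\big)_{i=1}^n\in(\AB)^n : \sum_{i=1}^n c(a_i,b_{i+1}) < \sum_{i=1}^n c(a_i,b_i) < \infty\Big\}
\]
(indices mod $n$) of strictly improving $n$-cycles satisfies $\pi^{\otimes n}(\Gamma_n)>0$. Using lower semicontinuity of $c$ one selects a point of $\Gamma_n$ and product neighborhoods on which the strict cycle inequality persists with a uniform gap, then performs a measurable swap of mass along the cycle to produce $\tilde\pi\in\Pmn$ with $\IAB c\,d\tilde\pi<\IAB c\,d\pi$, contradicting optimality. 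The finiteness condition in $\Gamma_n$ lets the rearrangement be carried out inside $\{c<\infty\}$, which has full $\pi$-measure because $\pi$, being optimal, is in particular a finite plan.

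\textbf{Proof of (c) and main obstacle.} Let $\pi$ be a finite $c$-cyclically monotone plan; since $\pi\in\Pcmn$, $\pi(\{c=\infty\})=0$, so (discarding also the $\mu\otimes\nu$-null set $N$, which is $\pi$-null as $N\subset\{c=\infty\}$) one can find a Borel $c$-cyclically monotone set $\Gamma\subset\{c<\infty\}\setminus N$ with $\pi(\Gamma)=1$. The goal is to exhibit $f,g$ as in Definition~\ref{def-04}, after which part (b) supplies optimality. Fix $(a_0,b_0)\in\Gamma$ and define, following R\"uschendorf,
\[
  f(a) := \inf\Big\{\sum_{i=0}^{n-1}\big(c(a_{i+1},b_i) - c(a_i,b_i)\big) : n\ge 1,\ (a_i,b_i)\in\Gamma\text{ for }i\ge 1,\ a_n = a\Big\}
\]
with $f(a_0)=0$, and $g(b):=\inf_{a\in A}\{c(a,b)-f(a)\}$. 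The $c$-cyclical monotonicity of $\Gamma$ ensures $f>-\infty$ on the $A$-projection of $\Gamma$, and by construction $\fog\le c$ pointwise while $\fog=c$ on $\Gamma$. The main obstacle, and the only place the hypothesis $\{c=\infty\}=F\cup N$ enters, is the combined requirement that $\fog\le c$ hold \emph{everywhere} (not merely off a $\mu\otimes\nu$-null set) and that $f,g$ be Borel: on the open complement $F^c$, $c$ is real-valued and \lsc, so the infimum defining $g$ can be realized through a countable exhaustion producing a Borel function; on $F$, $\fog\le c=\infty$ is automatic. With $f,g$ so constructed, part (b) yields optimality of $\pi$.
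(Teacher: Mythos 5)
Note first that the paper does not prove Theorem~\ref{res-75}: the three statements are quoted from \cite{AP02}, \cite{ST09} and \cite{BGMS08}. The paper only re-derives the finite-plan form of (b), namely Proposition~\ref{res-62}, and does so via the abstract saddle-point result Theorem~\ref{res-20}, a route quite different from yours. Your sketch of (a) is a reasonable outline of the Ambrosio--Pratelli variational argument but elides its actual technical content (the measurable redistribution of mass along a positive-measure family of improving cycles).

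Your argument for (b) does not establish the needed finiteness of $\pi$. From $\fog\le c$ and $c\ge0$ one only gets $(\fog)^+\le c$, \emph{not} $f^+\oplus g^+\le c$, so integrating against $\pi^o$ does not give $\int_A f^+\,d\mu+\int_B g^+\,d\nu<\infty$. Concretely, take $A=B=\mathbb{N}$, $c(m,n)=(m-n)^+$, $\mu=\nu$ with $\sum_n n\,\mu(n)=\infty$, $f(n)=n$, $g(n)=-n$: then $\fog\le c$, the diagonal plan lies in $\Pcmn$, and the diagonal $\pi$ is strongly $c$-cyclically monotone, yet $\int_A f^+\,d\mu=\infty$. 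The finiteness question you wave at is precisely what Lemma~\ref{res-03} is designed to sidestep, and it is the reason the paper's own Proposition~\ref{res-62} explicitly assumes $\pi\in\Pcmn$ from the outset.

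The route you propose for (c) is irreparable, not merely incomplete. You try to construct global potentials $f,g$ so that $\pi$ becomes strongly $c$-cyclically monotone, then invoke (b). But Counterexample~\ref{res-06}-2(e), discussed in Subsection~\ref{sec-ctex}, is set on a \emph{discrete} space $A=B=\mathbb{N}\cup\{\omega\}$ --- so $\{c=\infty\}$ is closed, and the hypothesis of (c) holds with $F=\{c=\infty\}$, $N=\emptyset$ --- and its unique finite plan $\pi$ is optimal (hence $c$-cyclically monotone by part (a)) yet proved in \cite{BS09} \emph{not} to be strongly $c$-cyclically monotone. Therefore no R\"uschendorf-type construction of $f,g$ with $\fog\le c$ everywhere and $\fog=c$ $\pi$-a.e.\ can succeed, and (c) cannot be deduced from (b): the proof in \cite{BGMS08} must, and does, compare costs directly using the closed set $F$ and the finite-valued theory off $F$, rather than pass through strong $c$-cyclical monotonicity.
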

Statement (a) is proved in L.~Ambrosio and A.~Pratelli's lecture
notes \cite{AP02} and (b) is taken from \cite{ST09}. Statement (c)
is proved in \cite{BGMS08}, it extends recent results of
A.~Pratelli \cite{Pra08}.

\begin{counterexamples}\label{res-06}  Otherwise stated,
$c$ is assumed to be \lsc.
\begin{enumerate}
    \item  $c$ is real valued.
    \begin{enumerate}
        \item \cite[Example 1.3]{BGMS08}  $c$ is not \lsc\ and no optimal plan
         exists.
        \item \cite[Example 4.5]{BS09} $c$ is the squared
        distance and no \emph{integrable} dual optimizers exist.
    \end{enumerate}
    \item  $c$ takes infinite values on a non-null set.
    \begin{enumerate}
         \item \cite[Example 4.1]{BS09} $c$ is not
        \lsc\ and the weak Kantorovich dual equality \eqref{eq-06} fails to
        hold: We have $\sup\eqref{Kt}<\inf\eqref{MK}.$
        \item \cite[Example 3.5]{AP02} A $c$-cyclically
        monotone plan which is not optimal.
        \item \cite[Example 1]{ST09} A $c$-cyclically monotone plan which is not
        strongly $c$-cyclically monotone.
        \item \cite[Example 5.1]{BGMS08} An optimal plan which is not
        strongly $c$-cyclically monotone.
        \item \cite[Example 4.2]{BS09} An optimal plan which is not
        strongly $c$-cyclically monotone with $c$ continuous.
    \end{enumerate}
\end{enumerate}
\end{counterexamples}

It appears that Theorem \ref{res-15} is the best possible result
under the assumption \eqref{eq-76} but that its extension to the
general case where $c$ takes infinite values on a non-null set is
still open. The Counterexample 2-b tells us that one must drop the
notion of $c$-cyclically monotone plan and with the
Counterexamples 2-d and 2-e, one sees  that even the notion of
strongly $c$-cyclically monotone plan is not enough to
characterize optimality.

Because of the dual gap when $c$ is not \lsc\ (Counterexample
2-a), it is reasonable to assume that $c$ is \lsc\ in the general
case. This will be assumed from now on.

\subsection*{The aim of this paper} This paper is aimed at
implementing the saddle-point method for solving \eqref{MK} in the
general case where $c$ might take infinite values, without
appealing to $c$-conjugates, see \cite{Vill03,Vill09}. After all,
\eqref{MK} is a convex minimization problem and one may wonder
what the standard approach based on convex duality could yield. It
appears that an extended version of the saddle-point is necessary
to investigate \eqref{MK} without imposing strong qualifications
of the constraints such as the standard requirement:
$\int_Ac_A\,d\mu+\int_Bc_B\,d\nu<\infty$ where $c\le c_A\oplus
c_B,$ see \cite{Vill03, Vill09}. The present paper relies on an
extension of the standard saddle-point method which has been
developed in \cite{Leo07a}.

\begin{enumerate}[-]
    \item A new proof of the Kantorovich dual equality \eqref{eq-04} is given in Theorem \ref{res-MK} providing at the
same time an improved result about the behavior of the minimizing
sequences of \eqref{MK}.
    \item An abstract characterization of the optimal plans is given in
Theorem \ref{res-20}. It expresses a saddle-point property and
extends the notion of strong $c$-cyclical monotonicity.
    \item It leads easily to a sufficient condition of optimality
    in Theorem \ref{res-05} below which is an improvement of Theorem
    \ref{res-75}-b. Its proof is given in Section \ref{sec-02}.
    \item Finding good necessary conditions for  optimal plans with a
genuinely $[0,\infty]$-valued cost function $c$ is still an open
problem. Theorem \ref{res-73} below, which is a corollary of
Theorem \ref{res-20}, goes one step in this direction. Its proof
is given in Section \ref{sec-01}.
\end{enumerate}

One says that $f$ is $\mu$-measurable if there exists a measurable
set $N$ such that $\mu(N)=0$ and $\1_Nf$ is a measurable function.
A property holds $\Pcmn$-almost everywhere if it holds true
outside a measurable set $N$ such that $\gamma(N)=0,$ for all
$\gamma\in\Pcmn.$ With these definitions in hand, we can state our
sufficient condition of optimality.

\begin{theorem}\label{res-05}
Let $\pi\in\Pcmn$ be any finite plan. If there exist a
$\mu$-measurable function $ f$ on $A$ and a $\nu$-measurable
function $ g$ on  $B$  which satisfy
\begin{equation}\label{eq-08bis}
    \left\{\begin{array}{l}
       f\oplus g\leq c\quad \Pcmn\textrm{-almost everywhere} \\
        f\oplus g= c\quad \pi\textrm{-almost everywhere,} \\
    \end{array}\right.
    \end{equation}
then $\pi$ is optimal.
\end{theorem}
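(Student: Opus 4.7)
The plan is to establish $\int c\,d\pi\le\int c\,d\tilde\pi$ for every $\tilde\pi\in\Pcmn$ (the case $\tilde\pi\in\Pmn\setminus\Pcmn$ being trivial), routing the comparison through the bilinear quantity $\int(\bar f\oplus\bar g)\,d\gamma$ for a Borel representative $(\bar f,\bar g)$ of $(f,g)$ and invoking Lemma~\ref{res-03}. First I would reduce to the Borel-measurable case: by $\mu$- and $\nu$-measurability we can pick Borel functions $\bar f:A\to[-\infty,\infty)$ and $\bar g:B\to[-\infty,\infty)$ with $\bar f=f$ $\mu$-a.e.\ and $\bar g=g$ $\nu$-a.e. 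Since every $\gamma\in\Pcmn$ has marginals $\mu$ and $\nu$, the equality $\bar f\oplus\bar g=f\oplus g$ holds $\gamma$-a.e., so both conditions in \eqref{eq-08bis} transfer verbatim to $(\bar f,\bar g)$.

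The main obstacle is that Lemma~\ref{res-03} requires $\bar f\oplus\bar g\le c$ to hold \emph{everywhere}, while \eqref{eq-08bis} only provides this on the complement of some Borel set $N\subset\AB$ with $\gamma(N)=0$ for every $\gamma\in\Pcmn$. Because $N$ need not have product structure, one cannot absorb it by modifying $\bar f$ and $\bar g$ separately. Instead, I would modify the cost itself: set $c'=c$ on $\AB\setminus N$ and $c'=\infty$ on $N$. Then $c'$ is Borel measurable, $c\le c'$, and $\bar f\oplus\bar g\le c'$ on all of $\AB$. For every $\gamma\in\Pcmn$ we have $\int c'\,d\gamma=\int c\,d\gamma<\infty$, and conversely any $\gamma\in\PAB$ with $\int c'\,d\gamma<\infty$ necessarily satisfies $\gamma(N)=0$, hence $\int c\,d\gamma<\infty$; so the set of finite plans attached to $c'$ coincides with $\Pcmn$. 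Applying Lemma~\ref{res-03} with cost function $c'$ and dual pair $(\bar f,\bar g)$ therefore yields, for every $\tilde\pi\in\Pcmn$,
\begin{equation*}
\int(\bar f\oplus\bar g)\,d\pi=\int(\bar f\oplus\bar g)\,d\tilde\pi\in[-\infty,\infty).
\end{equation*}

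Finally I would assemble three relations. The $\pi$-almost everywhere equality $\bar f\oplus\bar g=c$, combined with $\int c\,d\pi<\infty$ and $c\ge 0$, gives $\int(\bar f\oplus\bar g)\,d\pi=\int c\,d\pi$; the $\tilde\pi$-almost everywhere inequality $\bar f\oplus\bar g\le c$, legitimate because $\tilde\pi\in\Pcmn$ and $\tilde\pi(N)=0$, gives $\int(\bar f\oplus\bar g)\,d\tilde\pi\le\int c\,d\tilde\pi$; and the middle equality from the previous paragraph bridges the two. Chaining produces $\int c\,d\pi\le\int c\,d\tilde\pi$, which is the required optimality of $\pi$.
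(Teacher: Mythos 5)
Your proof is correct, but it follows a genuinely different route from the paper's. The paper obtains Theorem \ref{res-05} by repeating the proof of Proposition \ref{res-62}, which runs entirely inside the abstract saddle-point framework: Lemma \ref{res-03} is used to define a linear form $\omega_o$ on the line spanned by $(\mu,\nu)$, which is then extended by the Hahn-Banach theorem to $\omega\in\Xs$ dominated by $\Las=\iota_\Upsilon^*$; one checks $\omega_A\oplus\omega_B\in\overline{\Gamma}$ and closes with the abstract optimality criterion of Theorem \ref{res-20}-(1). You bypass Theorem \ref{res-20} altogether and compare $\pi$ directly with an arbitrary competitor $\tilde\pi\in\Pcmn$ via the chain $\int c\,d\pi=\int\bar f\oplus\bar g\,d\pi=\int\bar f\oplus\bar g\,d\tilde\pi\le\int c\,d\tilde\pi$, with Lemma \ref{res-03} as the sole substantive input. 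You also isolate and solve a point that the paper leaves implicit when it claims the proof of Proposition \ref{res-62} transfers ``without changing a word'' from the everywhere inequality to the $\Pcmn$-a.e.\ one: since the exceptional set $N$ need not have product form, one cannot repair $f$ and $g$ separately, and your replacement of $c$ by the Borel cost $c'$ (equal to $c$ off $N$ and to $\infty$ on $N$) restores the global inequality hypothesis of Lemma \ref{res-03} without altering $\Pcmn$ or any of the relevant integrals. The trade-off is that the paper's longer route showcases the saddle-point machinery that is the subject of the article, while yours is shorter, more elementary, and makes the optimality mechanism completely transparent; but it also shows that this particular sufficient condition does not really need the abstract apparatus at all.
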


The Counterexamples \ref{res-06}-(d,e) are optimal plans which are
not  strongly $c$-cyclically monotone but they both satisfy the
weaker property \eqref{eq-08bis}. See Subsection \ref{sec-ctex}
for more details.

The following result is our necessary condition of optimality.

\begin{theorem}\label{res-73}
Take any optimal plan $\pi,$  $\epsilon>0$ and $p$ any probability
measure on $\AB$ such that $\IAB c\,dp<\infty.$ Then, there exist
functions $\varphi\in L_1(\pi+p),$ $u$ and $v$ bounded continuous
on $A$ and $B$ respectively and a measurable subset $D\subset\AB$
such that
\begin{enumerate}
    \item
    $\varphi= c,\ \pi$-almost
    everywhere on $\AB\setminus D;$
    \item
    $\int_{D} (1+c)\,d\pi\le\epsilon;$
    \item
    $-c/\epsilon\le\varphi\le c,\ (\pi+p)$-almost everywhere;
    \item
    $-c/\epsilon\le u\oplus v\le c,$ everywhere;
    \item
    $\|\varphi-u\oplus v\|_{L_1(\pi+p)}\le\epsilon.$
\end{enumerate}
\end{theorem}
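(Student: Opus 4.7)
The plan is to deduce Theorem \ref{res-73} from the abstract saddle-point characterization of optimal plans provided by Theorem \ref{res-20}. That theorem should supply, for the optimal plan $\pi$, a measurable ``dual'' function $\varphi_0$ with $\varphi_0 \leq c$ everywhere and $\varphi_0 = c$ $\pi$-almost everywhere, together with an approximating sequence of bounded continuous tensor-product functions $u_n \in C_A$, $v_n \in C_B$ with $u_n \oplus v_n \leq c$ approaching $\varphi_0$ in a sense strong enough to pass to $L_1(\pi+p)$ once a small exceptional set is excised. Theorem \ref{res-73} is then a quantitative extraction from this limit.

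First I would exploit the integrability of $c$ under $\pi$ (finite by Definition \ref{def-02}-3 applied to the optimal plan) and under $p$ (by hypothesis) to choose $M > 0$ large enough that
\begin{equation*}
    \int_{\{c > M\}} (1+c)\, d(\pi + p) < \epsilon/4.
\end{equation*}
On the closed set $\{c \leq M\}$ the upper bound $u_n \oplus v_n \leq c \leq M$ is uniform, so dominated convergence reduces $L_1(\pi+p)$-approximation on $\{c \leq M\}$ to a statement about the negative part of $\varphi_0 - u_n \oplus v_n$. Selecting $n$ large enough that this $L_1$-error is at most $\epsilon/4$, setting $(u, v) := (u_n, v_n)$, $K := \|u\|_\infty + \|v\|_\infty$, and (in a subsequent refinement) enlarging $M$ if necessary so that $M \geq K\epsilon$, I would define
\begin{equation*}
    D := \{c > M\} \cup \bigl(\{c \leq M\} \cap \{u \oplus v < -c/\epsilon\}\bigr)
\end{equation*}
and $\varphi := c\,\1_{\AB \setminus D} + (u \oplus v)\,\1_D$. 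The choice $M \geq K\epsilon$ guarantees $u \oplus v \geq -K \geq -c/\epsilon$ on $\{c > M\}$, so the sandwich $-c/\epsilon \leq \varphi \leq c$ is enforced everywhere. Items (1), (3) and (4) then hold by construction, item (2) follows from the choice of $M$ together with the smallness of the second piece of $D$, and item (5) from the $L_1$-approximation on $\AB \setminus D$ combined with the integrability of $c + |u \oplus v|$ on $D$.

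The main obstacle is ensuring that the second piece $\{c \leq M\} \cap \{u \oplus v < -c/\epsilon\}$ carries small $(1+c)$-mass under $\pi$. The tensor-product structure forbids any pointwise truncation of $u \oplus v$ at $-c/\epsilon$, so one must argue separately that the negative excursions of $u + v$ below $-c/\epsilon$ occur only on a $\pi$-small set. This requires more than pointwise approximation of $\varphi_0$: it requires a one-sided $L_1$-control on the negative part of $\varphi_0 - u_n \oplus v_n$, which in the extended saddle-point framework of \cite{Leo07a} is built into the very definition of the abstract saddle-point (via an appropriate uniform-integrability / Fenchel-duality mechanism in the bidual). Propagating this control through the choice of $n$ --- so that both the negative-part norm and the standard $L_1$-error are simultaneously less than $\epsilon/4$ --- is the technical heart of the argument.
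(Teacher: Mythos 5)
There is a genuine gap, and it lies precisely at the two places you flag as ``the technical heart'' without resolving them. Your starting point assumes that Theorem~\ref{res-20} delivers a measurable function $\varphi_0$ with $\varphi_0\le c$ everywhere, $\varphi_0=c$ $\pi$-a.e., together with an approximating sequence $u_n\oplus v_n\le c$ converging to $\varphi_0$ in a usable sense. Theorem~\ref{res-20} provides no such thing: part~(1) is an abstract equivalence in the algebraic bidual $\Xs$, and part~(2), which would give dual attainment, is conditional on the constraint qualification $(\mu,\nu)\in\diffdom\Las$, which the paper explicitly cannot verify (Remark~\ref{rem-04} shows $\icordom\Las=\emptyset$). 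Even when a dual optimizer $\omega$ exists, $T^*\omega=\omega_A\oplus\omega_B$ lives in the algebraic dual $\Uc^{\prime*}$, not in any function space, and passing from that abstract object to an $L_1(\pi+p)$ function requires the separate argument of Lemma~\ref{res-67}; and the ``approximating sequence'' you invoke would at best be a weak-$*$ net, not a sequence with one-sided $L_1$ control.

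The paper's proof avoids your starting point altogether. It replaces $\Phi=\iota_{\{u\le c\}}$ by the approximants $\Phi_k=\iota_{\{-kc\le u\le c\}}$, whose conjugate $\Fs_k(\ell)=\Ncs{\ell^+}+k\Ncs{\ell^-}$ has full domain $\Uc'$; consequently $\Las_k$ has full domain, $\partial\Las_k(\mu,\nu)\not=\emptyset$, and the dual problems for the $\Phi_k$ are genuinely attained. This is where the bound $-c/\epsilon$ in items~(3)--(4) actually comes from: it is $-kc$ with $k\sim 1/\epsilon$, built into the constraint set $\Sigma_k$ from the outset, not a truncation imposed afterwards via the set $D$. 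The link between the approximating problems and the original one is made through $\Gamma$-convergence (Lemmas~\ref{res-69}--\ref{res-70}), Ekeland's variational principle to handle $\epsilon$-minimizers (Lemma~\ref{res-71}), and the $L_1$ identification in the bidual (Lemma~\ref{res-67}). Crucially, the one-sided control you need on the negative part---your ``main obstacle''---is exactly the estimate $\Ncs{\lt_k^-}\le (\inf\eqref{MK}+2)/k$ of Lemma~\ref{res-72}(iv), which follows quantitatively from the $k$-Lipschitz behaviour of $\Fs_k$ and the choice $\epsilon_k=1/k^2$, not from any property of a hypothetical $\varphi_0$. Your sketch, by contrast, acknowledges that this one-sided control is needed but offers no mechanism to produce it; in particular the second component of your set $D$, namely $\{c\le M\}\cap\{u\oplus v<-c/\epsilon\}$, is left entirely uncontrolled for item~(2). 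Finally, the set $D$ in the paper arises from a different decomposition: the Riesz decomposition $\lt=\lt^a+\lt^s$ into measure and singular parts in $\Uc'$ and the estimate $\Ncs{\lt^{+a}-\pi}\le 2\epsilon$, not from a truncation of $c$ at level $M$.
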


An important thing to notice in this result is the appearance of
the probability measure $p$ in items (3) and (5). One can read
(3-5) as an approximation of $\fog\le c,$ $(\pi+ p)$-almost
everywhere. Since it is required that $\IAB c\,dp<\infty,$ one can
choose $p$ in $\Pcmn,$ and the properties (1-5) of this theorem
are an approximation of \eqref{eq-08bis} where $\Pcmn$-a.e.\! is
replaced by $(\pi+p)$-a.e.

In \cite{BLS09a}, M. Beiglb\"ock, W. Schachermayer and the author
of the present paper investigate the same problem with a different
approach which is still based on a saddle point method. A
characterization of optimal plans which is more explicit than
Theorem \ref{res-20}  is obtained under some restrictive
assumptions. In particular, the problem of considering
simultaneously all the measures $p\in\Pcmn$ in Theorem
\ref{res-73} is handled by means of a projective limit argument.

\subsection*{Notation}
Let $X$ and $Y$ be topological vector spaces and  $f: X\rightarrow
[-\infty,+\infty]$ be an extended-real valued function.
\begin{enumerate}[-]
    \item The algebraic dual space of $X$ is $X^{\ast},$ the topological
dual space of $X$ is $X'.$
    \item The topology of $X$ weakened by $Y$ is $\sigma(X,Y)$ and one
writes $\langle X,Y\rangle$ to specify that $X$ and $Y$ are in
separating duality.
    \item The convex conjugate of $f$ with respect to $\langle X,Y\rangle$ is
$f^*(y)=\sup_{x\in X}\{\langle x,y\rangle -f(x)\}\in
[-\infty,+\infty],$ $y\in Y.$
    \item The subdifferential of $f$ at $x$ with
respect to $\langle X,Y\rangle$ is $\partial_Y f(x)=\{y\in Y;
f(x+\xi)\geq f(x)+\langle y,\xi\rangle, \forall
 \xi\in X\}.$ If no confusion occurs, one writes
$\partial f(x).$
    \item The effective domain of $f$ is $\dom f=\{x\in X; f(x)<\infty\}.$
    \item One denotes $\icordom f$ the intrinsic core of the effective domain of $f$
$\dom f.$ Recall that the intrinsic core of a subset $A$ of a
vector space is
    $
\mathrm{icor\,} A=\{x\in A; \forall x'\in\mathrm{aff\,}A, \exists
t>0, [x,x+t(x'-x)[\subset A\}
    $
where $\mathrm{aff\,}A$ is the affine space spanned by $A.$

    \item The domain of $\partial f$ is $\diffdom f=\{x\in X;\partial f(x)\not=\emptyset\}.$
    \item The indicator function of a subset $A$ of $X$ is defined by
    \begin{equation*}
    \iota_A(x)=\left\{%
\begin{array}{ll}
    0, & \hbox{if }x\in A \\
    +\infty, & \hbox{otherwise} \\
\end{array}%
\right.,\quad x\in X.
   \end{equation*}
    \item The support function of $A\subset X$ is $\iota_A^*(y)=\sup_{x\in
A}\langle x,y\rangle,$ $y\in Y.$
    \item The Dirac measure at $a$ is denoted $\delta_a.$
\end{enumerate}

\subsection*{Acknowledgements}
The author thanks Stefano Bianchini for very useful comments.

\section{The abstract convex minimization problem}
\label{sec:abstractpb} The Monge-Kantorovich problem is a
particular instance of an abstract convex minimization problem
which is solved in \cite{Leo07a} by means of an extension of the
saddle-point method.  This extension allows to remove standard
topological restrictions on the constraint sets (the so-called
constraint qualifications) at the price of solving an arising new
problem. Namely, one has to compute a \lsc\ convex extension of
the convex conjugate of the objective function. This may be a
rather difficult task in some situations, but it is immediate in
the case of the Monge-Kantorovich problem.
\\
Let us recall the main results of \cite{Leo07a}.

\subsection*{The minimization problem}

Let $\Uo,$ $\Yo$ be two vector spaces, $$\Fo:\Uo\to[0,\infty]$$ an
extended-nonnegative  function on  $\Uo,$ and $$T_o^*:\Yo\to\Uo$$
a linear mapping from $\Yo$ to $\Uo.$ Throughout this section, it
is assumed that
\begin{itemize}
    \item[\HF]
1-\quad $\Fo: \Uo\rightarrow [0,+\infty]$ is convex and $\Fo(0)=0$\\
2-\quad $\forall u\in\Uo, \exists \alpha >0, \Fo(\alpha u)<\infty$\\
3-\quad $\forall u\in\Uo, u\not=0,\exists t\in\R, \Fo(tu)>0$
    \item[\HT]
1-\quad $T_o^\ast (\Yo)\subset\Uo$\\
2-\quad $\mathrm{ker\ }T_o^\ast =\{0\}$
\end{itemize}
Let $\Lo$ and $\Xo$ be the \emph{algebraic} dual vector spaces of
$\Uo$ and $\Yo.$ The convex conjugate of $\Fo$ with respect to the
dual pairing $\langle\Uo,\Lo\rangle$ is
\begin{equation*}
    \Fos(\ell)=\sup_{u\in\Uo}\{\ul-\Fo(u)\}\in[0,\infty],\quad \ell\in\Lo
    \end{equation*}
and the adjoint of $T_o^*$ is $T_o:\Lo\to\Xo$ defined for all
$\ell\in\Lo$ by $\langle y,T_o\ell\rangle_{\Yo,\Xo}=\langle T_o^*
y,\ell\rangle_{\Uo,\Lo}.$ We obtain the diagram
\begin{equation}
 \begin{array}{ccc}
\Big\langle\ \Uo & , & \Lo \ \Big\rangle \\
T^\ast_o  \Big\uparrow & & \Big\downarrow
 T_o
\\
\Big\langle\ \Yo & , & \Xo\ \Big\rangle
\end{array}
\tag{Diagram 0}
\end{equation}
For each $x\in\Xo,$ the optimization problem to be considered is
\begin{equation*}
    \textsl{minimize } \Fos(\ell) \quad\textsl{subject to}\quad T_o\ell=x,\qquad \ell\in\Lo   \tag{$P_o^x$}
\end{equation*}

\subsection*{A dual problem}

Let us define
\begin{eqnarray*}
  \La_o(y) &:=& \Fo(T_o^* y),\quad y\in\Yo \\
  \NF u &:=& \inf\left\{a>0; \max\Big(\Fo(-u/a),\Fo(u/a)\Big)\le1\right\},\quad u\in\Uo \\
  \NL y &:=& \inf\left\{a>0;
  \max\Big(\La_o(-y/a),\La_o(y/a)\Big)\le1\right\},\quad y\in\Yo
\end{eqnarray*}
Under our assumptions, $\NF\cdot$ and $\NL\cdot$ are norms. One
introduces
\begin{enumerate}[-]
    \item $\UU$ and $\YY$ the completions of $(\Uo,\NF\cdot)$ and
$(\Yo,\NL\cdot);$
    \item The \emph{topological} dual spaces of $\UU$ and
$\YY$ are $\UU'=\LL$ and $\YY'=\XX;$
    \item The \emph{algebraic} dual spaces of $\LL$ and $\XX$ are
    denoted by $\Ls$ and $\Xs;$
    \item $T$ is the restriction of $T_o$ to $\LL\subset\Lo;$
    \item $\Fs$ is the restriction of $\Fos$ to $\LL\subset\Lo.$
\end{enumerate}
It is proved in \cite{Leo07a} that $T\LL\subset\XX.$ This allows
one to define the algebraic adjoint $T^*:\Ls\to\Xs.$ We have the
following diagram
\begin{equation}
 \begin{array}{ccc}
\Big\langle\ \LL & , & \Ls \ \Big\rangle \\
T \Big\downarrow & & \Big\uparrow
 T^\ast
\\
\Big\langle\ \XX & , & \Xs\ \Big\rangle
\end{array}
\tag{Diagram 1}
\end{equation}
Also consider
\begin{equation}
\left\{
\begin{array}{lcll}
 \Fb(\zeta) &:=& \sup_{\ell\in\LL}\{\langle\zeta,\ell\rangle-\Fs(\ell)\},&\quad \zeta\in\Ls \\
  \Lab(\omega) &:=& \Fb(T^*\omega),&\quad\omega\in\Xs.\\
\end{array}
  \right.
\end{equation}
The maximization problem
\begin{equation*}
    \textsl{maximize } \langle\omega,x\rangle -\Lab(\omega), \quad\omega\in\Xs   \tag{$\overline{D}^x$}
\end{equation*}
is the dual problem associated with
\begin{equation*}
    \textsl{minimize } \Fs(\ell) \quad\textsl{subject to}\quad T\ell=x,\qquad \ell\in\LL   \tag{$P^x$}
\end{equation*}
with respect to the usual Fenchel perturbation.

\subsection*{Statements of the abstract results}

Let us introduce
\begin{equation*}
    \Las(x):=\sup_{y\in\Yo}\{\yx-\La_o(y)\},\quad x\in\XX
\end{equation*}
the convex conjugate of $\La_o+\iota_{\Yo}$ with respect to the
dual pairing $\langle \YY,\XX\rangle.$

\begin{theorem}[Primal attainment and dual equality, \cite{Leo07a}]\label{xP3}
Assume that \HF\ and \HT\ hold.
\begin{enumerate}[(a)]
    \item For all $x$ in $\XX,$ we have the dual equality
        \begin{equation}\label{xped}
            \inf\{\Fs(\ell); \ell\in\LL, T\ell=x\}=\inf(P^x_o)=\inf(P^x)=\Las(x)\in
            [0,\infty].
        \end{equation}
        Moreover,
        $\Las$ is $\sigma(\XX,\YY)$-inf-compact.
    \item If $x\not\in\XX,$ $(P^x_o)$ admits no solution.
    Otherwise, $P^x_o$ and $P^x$  admit the same (possibly empty) set of solutions.
    \item If in addition $$x\in\dom\Las,$$ then $P^x_o$ (or equivalently $P^x$) is attained in
    $\LL.$ Moreover, any minimizing sequence for $P^x_o$ has $\sigma(\LL,\UU)$-cluster
    points and every such cluster point solves $P^x_o$.
\end{enumerate}
\end{theorem}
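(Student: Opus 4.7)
The plan is to exploit the fact that the gauges $\NF{\cdot}$ and $\NL{\cdot}$ endow $\UU$ and $\YY$ with Banach-space structures, so that compactness---both of the sublevel sets of $\Las$ in $\sigma(\XX,\YY)$ and of minimizing sequences of $P_o^x$ in $\sigma(\LL,\UU)$---can be extracted from Banach--Alaoglu.

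As a preliminary bridge between $P_o^x$ and $P^x$, I would show that every $\ell\in\Lo$ with $\Fos(\ell)<\infty$ automatically lies in $\LL=\UU'$. Indeed, for any $u\in\Uo$ and any $a>\NF u$ one has $\Fo(\pm u/a)\le 1$ by the very definition of $\NF{\cdot}$, so Fenchel's inequality gives $|\ul|\le a(1+\Fos(\ell))$; letting $a\downarrow\NF u$ yields $|\ul|\le\NF u\,(1+\Fos(\ell))$, so $\ell$ is $\NF{\cdot}$-continuous on $\Uo$ and extends uniquely to $\UU$. Consequently $P_o^x$ and $P^x$ share the same finite-cost feasible points and therefore have the same infimum and the same solution set; in particular, if $x\notin\XX=T(\LL)$ no finite-cost feasible element exists and (b) follows.

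For the dual equality in (a), weak duality gives $\Las(x)\le\inf(P^x)$ on $\XX$. For the reverse inequality I would apply Fenchel--Rockafellar duality in the pairings $\langle\UU,\LL\rangle$ and $\langle\YY,\XX\rangle$, using that $\La_o$ extends to a convex function bounded on the unit ball of $\YY$ (since $\La_o(\pm y)\le 1$ whenever $\NL y<1$) and hence continuous on the whole Banach space $\YY$; the no-gap conclusion follows from standard Fenchel duality with a continuous objective. The inf-compactness of $\Las$ rests on the same key estimate: any $x\in\{\Las\le M\}$ satisfies $\yx\le\La_o(y)+M\le 1+M$ for every $y$ with $\NL y\le 1$, so $\{\Las\le M\}$ is norm-bounded in $\XX=\YY'$ and $\sigma(\XX,\YY)$-closed as a supremum of continuous affine functions; Banach--Alaoglu concludes. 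For (c), given $x\in\dom\Las$, a minimizing sequence $(\ell_n)$ for $P^x$ satisfies $\Fs(\ell_n)\le M$, and the same Fenchel estimate produces $|\ul|\le\NF u\,(1+M)$, so $(\ell_n)$ is norm-bounded in $\LL=\UU'$. Banach--Alaoglu supplies a $\sigma(\LL,\UU)$-cluster point $\bar\ell$, which is feasible by the $\sigma(\LL,\UU)$-to-$\sigma(\XX,\YY)$ continuity of $T$ and optimal by the $\sigma(\LL,\UU)$-lsc of $\Fs$ (itself a convex conjugate), hence a solution of $P_o^x$ by the preliminary step.

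The main obstacle is the absence-of-duality-gap in step (a): the usual Fenchel constraint qualification would require $x$ to sit in the norm-interior of $T(\dom\Fs)$, which typically fails in the Monge--Kantorovich applications we have in mind. The device of \cite{Leo07a} bypasses this by passing to the algebraic dual $\Ls$ of $\LL$ (and correspondingly $\Xs$ of $\XX$), extending $\Fs$ to $\Fb$ and $\La_o$ to $\Lab$, and studying the enlarged dual problem $\overline{D}^x$; the perturbation is automatically qualified on these extended spaces, and one obtains $\inf(P^x)=\inf(\overline{D}^x)=\Las(x)$ by a biconjugation argument combined with the inf-compactness bound above.
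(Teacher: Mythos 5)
Theorem~\ref{xP3} is imported from \cite{Leo07a} and not proved in the present paper, so there is no internal proof to compare against; I assess your sketch on its own terms. The preliminary estimate $|\langle u,\ell\rangle|\le\NF u\,(1+\Fos(\ell))$ is exactly the right bridge: it shows $\dom\Fos\subset\LL$, identifies $P_o^x$ with $P^x$ (giving (b)), and makes the sublevel sets of $\Fs$ norm-bounded in $\LL=\UU'$. Combined with Banach--Alaoglu, the $\sigma(\LL,\UU)$-lower semicontinuity of $\Fs$ (a supremum of $\sigma(\LL,\Uo)$-continuous affine maps), and the weak continuity of $T$ (the adjoint of the isometric extension $T^*:\YY\hookrightarrow\UU$ of $T_o^*$), this gives (c) and the inf-compactness of $\Las$. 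All of this is correct.

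The argument for the dual equality in (a), however, has a genuine flaw. You claim that $\La_o$ extends to a function ``continuous on the whole Banach space $\YY$''. This is false: in the Monge--Kantorovich specialisation $\Fo=\iota_\Gamma$, so $\La_o$ is $\{0,\infty\}$-valued; boundedness above near $0$ gives continuity at $0$ and local Lipschitzness on $\inter\dom\La_o$, not on $\YY$. Continuity at $0$ is indeed a legitimate qualification, but the ``standard Fenchel duality with a continuous objective'' conclusion cannot be invoked as stated, and the truly delicate step is hidden: one must show that the marginal function $h(x)=\inf\{\Fs(\ell);T\ell=x\}$ is $\sigma(\XX,\YY)$-lower semicontinuous (this follows from your inf-compactness bound, but you never say so), \emph{and} that $h^*$ and $\La_o|_{\Yo}$ share the same lower semicontinuous convex closure on $\YY$. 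That last identification is a non-trivial commutation of closure with restriction along the isometric embedding $T^*:\YY\hookrightarrow\UU$, and your sketch silently assumes it.

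Your closing paragraph also misattributes the r\^ole of the algebraic-dual enlargement $\Ls,\Xs,\Fb,\overline{D}^x$. That device of \cite{Leo07a} is what secures dual \emph{attainment} and the representation formula (Theorem~\ref{T3b}), where the qualification $x\in\diffdom\Las$ replaces the usually empty $\icordom\Las$; it is not what makes the dual \emph{equality} of Theorem~\ref{xP3}(a) hold. Presenting it as the fix for the absence-of-gap in (a) conflates two different constraint qualifications and two different theorems.
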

Denote the ``algebraic'' subdifferentials
\begin{eqnarray*}
  \partial_{\LL}\Fb(\zeta_o) &=& \{\ell\in\LL;\Fb(\zeta_o+\zeta)\ge \Fb(\zeta_o)+\langle\zeta,\ell\rangle,\forall\zeta\in\Ls\},\quad \zeta_o\in\Ls \\
  \partial_{\Xs}\Las(x_o) &=& \{\omega\in\Xs; \Las(x_o+x)\ge \Las(x_o)+\langle \omega,x\rangle, \forall
  x\in\XX\}, \quad x_o\in\XX
\end{eqnarray*}
and define
 $$
\diffdom\Las=\{x\in\XX;\partial_{\Xs}\Las(x)\not=\emptyset\}
 $$
the subset of the constraint specifiers $x\in\XX$ such that
$\partial_{\Xs}\Las(x)$ is not empty.

\begin{theorem}[Dual attainment and representation, \cite{Leo07a}]\label{T3b}
Let us assume that \HF\ and \HT\ hold.
\begin{enumerate}[(1)]
    \item
For any  $\ell\in\LL$ and $\omega\in\Xs,$
 \begin{equation}\label{eq-97}
    \left\{\begin{array}{l}
     T\ell=x\\
     \ell\in\partial_{\LL}\Fb(T^*\omega) \\
    \end{array}\right.
\end{equation}
is equivalent to
\begin{equation*}
    \left\{%
\begin{array}{l}
    \hbox{$\ell$ is a solution to $P^{x}$,} \\
    \hbox{$\omega$ is a solution to  $\overline{D}^x$ and} \\
    \hbox{the dual equality $\inf(P^{x})=\Las(x)$ holds.} \\
\end{array}%
\right.
\end{equation*}

    \item Suppose that in addition the constraint qualification
\begin{equation}\label{xCQbis}
    x\in\diffdom\Las,
\end{equation}
is satisfied. Then, the primal problem $P^{x}$ is attained in
$\LL,$ the dual problem $\overline{D}^x$ is attained in
    $\XX^*$ and every couple of solutions $(\ell,\omega)$  to $P^{x}$ and $\overline{D}^x$ satisfies
    (\ref{eq-97}).
\end{enumerate}
\end{theorem}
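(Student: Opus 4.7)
The plan is to derive Theorem \ref{T3b} from the Fenchel--Young characterization of subdifferentials, once a pair of biconjugacy identities is in hand, combined with Theorem \ref{xP3}.

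I would begin with two preparatory identifications. First, $\Fb$ is by definition the Fenchel conjugate of $\Fs$ with respect to $\langle\LL,\Ls\rangle$; conversely $\Fs$ is the restriction to $\LL$ of $\Fos$, hence convex and $\sigma(\LL,\Ls)$-lower semicontinuous (as $\Fos$ is a supremum of $\sigma(\Lo,\Uo)$-continuous affine functions and $\Uo$ embeds into $\Ls$), so Fenchel--Moreau yields
\begin{equation*}
\Fs(\ell)=\sup_{\zeta\in\Ls}\{\langle\zeta,\ell\rangle-\Fb(\zeta)\},\qquad \ell\in\LL.
\end{equation*}
Second, combining the dual equality $\Las(x)=\inf\{\Fs(\ell):T\ell=x\}$ of Theorem \ref{xP3}(a) with a simple interchange of suprema, a direct calculation gives, for every $\omega\in\Xs$,
\begin{equation*}
\sup_{x\in\XX}\{\langle\omega,x\rangle-\Las(x)\}=\sup_{\ell\in\LL}\{\langle T^*\omega,\ell\rangle-\Fs(\ell)\}=\Fb(T^*\omega)=\Lab(\omega),
\end{equation*}
so that $\Lab$ is the conjugate of $\Las$ with respect to $\langle\XX,\Xs\rangle$.

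For Part (1), the first identification turns the inclusion $\ell\in\partial_\LL\Fb(T^*\omega)$ into the Fenchel--Young equality $\Fs(\ell)+\Fb(T^*\omega)=\langle T^*\omega,\ell\rangle$. Under the feasibility constraint $T\ell=x$, the right-hand side becomes $\langle\omega,x\rangle$ and $\Fb(T^*\omega)=\Lab(\omega)$, so this identity reads $\Fs(\ell)=\langle\omega,x\rangle-\Lab(\omega)$. This matches the primal value at $\ell$ with the dual value at $\omega$; together with weak duality and the primal--dual equality $\inf(P^x)=\Las(x)$ of Theorem \ref{xP3}(a), it forces $\ell$ to solve $P^x$, $\omega$ to solve $\overline{D}^x$, and the three values to agree. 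The converse implication is obtained by retracing the same chain of equalities, using $T\ell=x$ (built into $\ell$ being primal-feasible) to convert $\langle\omega,x\rangle$ back into $\langle T^*\omega,\ell\rangle$.

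For Part (2), the constraint qualification $x\in\diffdom\Las$ delivers both attainments. Since $\diffdom\Las\subset\dom\Las$, Theorem \ref{xP3}(c) supplies a primal solution $\ell\in\LL$ of $P^x$. For the dual, pick any $\omega\in\partial_\Xs\Las(x)$; the Fenchel--Young equality for $\Las$ at $x$, combined with the second preparatory identification $\Lab=\Las^*$, reads $\Las(x)+\Lab(\omega)=\langle\omega,x\rangle$, so $\omega$ attains the supremum in $\overline{D}^x$. Part (1) then characterizes every pair of primal and dual solutions by (\ref{eq-97}). The chief technical obstacle, which is what \cite{Leo07a} handles once and for all, is the preparatory claim that $\Lab$ on the \emph{algebraic} dual $\Xs$ is the genuine conjugate of $\Las$: without a Banach structure on $\Ls$ and $\Xs$, the Fenchel--Moreau step and the interchange of suprema must be justified through the explicit construction of the completions $(\UU,\NF\cdot)$ and $(\YY,\NL\cdot)$ and the verification that $T\LL\subset\XX$.
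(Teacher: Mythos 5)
The paper states Theorem \ref{T3b} as a recalled result from \cite{Leo07a} and does not reproduce its proof, so there is no in-paper argument to compare against; your reconstruction has to be judged on its own terms, and it is sound. The two biconjugacy identifications you set up are indeed the crux: $\Fs$ is $\sigma(\LL,\Ls)$-lower semicontinuous because it is a supremum of the affine maps $\ell\mapsto\langle u,\ell\rangle-\Fo(u)$ with $u\in\Uo\subset\UU\subset\Ls$, so $\Fb^*=\Fs$ and Fenchel--Young turns $\ell\in\partial_{\LL}\Fb(T^*\omega)$ into the identity $\Fs(\ell)+\Fb(T^*\omega)=\langle T^*\omega,\ell\rangle$; and $\Lab=\Las^*$ for the pairing $\langle\XX,\Xs\rangle$ follows from the interchange of suprema, which is legitimate precisely because $T\LL\subset\XX$ (the fact you rightly flag as the content carried over from \cite{Leo07a}).

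One step deserves to be made explicit rather than described as a mechanical ``retracing.'' In the converse of part (1) you have $\Fs(\ell)=\inf(P^x)=\Las(x)$ and $\langle\omega,x\rangle-\Lab(\omega)=\sup(\overline{D}^x)$, but to recover the Fenchel identity you still need $\sup(\overline{D}^x)=\Las(x)$, i.e.\ $\Las^{**}=\Las$ in the pairing $\langle\XX,\Xs\rangle$. This does hold: $\Las$ is convex, proper (e.g.\ $\Las(0)=0$), and $\sigma(\XX,\YY)$-inf-compact by Theorem \ref{xP3}(a), hence $\sigma(\XX,\Xs)$-lsc since $\YY\subset\Xs$, so Fenchel--Moreau applies. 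It is worth stating, both because it makes the third item of the equivalence automatic given Theorem \ref{xP3}(a), and because it is exactly the point at which passing to the algebraic dual $\Xs$ (rather than $\XX'$) guarantees the dual problem always has enough candidates. With that remark filled in, your derivation of part (2) — primal attainment from Theorem \ref{xP3}(c) via $\diffdom\Las\subset\dom\Las$, dual attainment by plugging $\omega\in\partial_{\Xs}\Las(x)$ into Fenchel--Young for $\Las$ and $\Lab$ — is complete.
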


By the geometric version of Hahn-Banach theorem, $\icordom\Las:$
the intrinsic core of $\dom\Las,$ is included in $\diffdom\Las.$
But, as will be seen at Remark \ref{rem-04} below, the
Monge-Kantorovich problem provides us with a situation where
$\icordom\Las$ is empty. This is one of the main difficulties to
be overcome when applying the saddle-point method to solve
\eqref{MK}.

It is a well-known result of convex conjugacy that the
representation formula
        \begin{equation}\label{xeq-110}
    \ell\in\partial_{\LL}\Fb(T^*\omega)
\end{equation}
 is equivalent to
 \begin{equation*}
    T^*\omega\in\partial_{\Ls}\Fs(\ell)
\end{equation*}
 and also equivalent to Fenchel's identity
\begin{equation}\label{xeq-107}
    \Fs(\ell)+\Fb(T^*\omega)=\langle \omega,T\ell\rangle.
\end{equation}

\begin{proposition}[\cite{Leo07a}]\label{res-01}
Assume that \HF\ and \HT\ hold. Any solution $\omega\in\Xs$ of
$\overline{D}^x$ shares the following properties
\begin{itemize}
 \item[(a)] $\omega$ is in the $\sigma(\XX^*,\XX)$-closure of $\dom\La;$
 \item[(b)] $T^\ast \omega$ is in the $\sigma(\LL^*,\LL)$-closure of
 $T^\ast(\dom\La).$
\end{itemize}
If in addition the level sets of $\Fo$ are $\NF\cdot$-bounded,
then
\begin{itemize}
 \item[(a')] $\omega$ is in $\YY''.$ More precisely, it is in the $\sigma(\YY'',\XX)$-closure of $\dom\La;$
 \item[(b')] $T^\ast \omega$ is in $\UU''.$ More precisely, it is in the $\sigma(\UU'',\LL)$-closure of
 $T^\ast(\dom\La)$
\end{itemize}
where $\YY''$ and $\UU''$ are the topological bidual spaces of
$(\UU,\NF\cdot)$ and $(\YY,\NL\cdot).$
\end{proposition}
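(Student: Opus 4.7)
My plan is to argue by Hahn--Banach separation, exploiting two simple facts: first, the weak-duality inequality
\begin{equation*}
\Las(x) \ge \langle \omega,x\rangle - \Lab(\omega),
\end{equation*}
valid for any $\omega\in\Xs$ and any $x\in\XX$ (this follows from Theorem \ref{xP3}(a), which gives $\sup(\overline{D}^x)=\Las(x)$); second, the explicit formula $\Las(x)=\sup_{y\in\Yo}\{\langle y,x\rangle-\La_o(y)\}$ together with $\La_o\ge 0$. Note that any solution $\omega$ of $\overline{D}^x$ automatically lies in $\dom\Lab$.

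For (a) I argue by contradiction. Suppose $\omega$ is not in the $\sigma(\Xs,\XX)$-closure of $\dom\La_o$, where $\dom\La_o\subset\Yo$ is embedded in $\Xs$ via the natural maps $\Yo\hookrightarrow\YY\hookrightarrow\YY''\subset\Xs$. The geometric Hahn--Banach theorem, applied in the locally convex space $(\Xs,\sigma(\Xs,\XX))$ whose continuous dual is $\XX$, produces $\bar x\in\XX$ and $\alpha\in\R$ with $\langle\omega,\bar x\rangle>\alpha\ge\langle y,\bar x\rangle$ for every $y\in\dom\La_o$. For every $t>0$, using $\La_o\ge 0$,
\begin{equation*}
\Las(t\bar x)=\sup_{y\in\dom\La_o}\{t\langle y,\bar x\rangle-\La_o(y)\}\le t\alpha,
\end{equation*}
while weak duality yields $\Las(t\bar x)\ge t\langle\omega,\bar x\rangle-\Lab(\omega)$. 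Dividing by $t$ and letting $t\to\infty$, using $\Lab(\omega)<\infty$, gives $\langle\omega,\bar x\rangle\le\alpha$, contradicting the strict separation. Part (b) follows the same template: if $T^*\omega$ is not in the $\sigma(\Ls,\LL)$-closure of $T_o^*(\dom\La_o)$, separate it via $\bar\ell\in\LL$ and $\beta\in\R$, then apply the argument above with $\bar x$ replaced by $T\bar\ell\in\XX$, using $\langle T_o^*y,\bar\ell\rangle=\langle y,T\bar\ell\rangle$ and $\langle T^*\omega,\bar\ell\rangle=\langle\omega,T\bar\ell\rangle$.

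For (a') and (b') under the stronger hypothesis that the level sets of $\Fo$ are $\NF\cdot$-bounded, the first step is to upgrade $\omega\in\Xs$ to $\omega\in\YY''$ and $T^*\omega\in\Ls$ to $T^*\omega\in\UU''$. The identity $\NL y=\NF{T_o^*y}$, which follows directly from the definitions, transfers boundedness of the level sets of $\Fo$ to $\NL\cdot$-boundedness of those of $\La_o$. Hence $\{\La_o\le 1\}$ is bounded in $(\YY,\NL\cdot)$, and its polar in $\XX$ is a neighborhood of $0$ in the dual-norm topology of $\XX=\YY'$. Running the separation argument of (a) with $\bar x$ ranging over this polar shows that $\omega$ is bounded on a norm-neighborhood of $\XX$, hence norm-continuous, i.e.\ $\omega\in\XX'=\YY''$. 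An analogous bidual argument, using $T$ and the $\NF\cdot$-boundedness directly in $\UU$, places $T^*\omega$ in $\UU''$. Once these finer memberships are in hand, one replays the separation of (a) and (b) inside $\YY''$ and $\UU''$; since the continuous duals of $(\YY'',\sigma(\YY'',\XX))$ and $(\UU'',\sigma(\UU'',\LL))$ are still $\XX$ and $\LL$, the very same proof yields the finer closures claimed in (a') and (b').

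The main obstacle is precisely the upgrade in (a') and (b') from $\Xs$ to $\YY''$ and from $\Ls$ to $\UU''$. The separations in (a) and (b) are essentially one-line applications of Hahn--Banach combined with scaling; by contrast, translating the norm-boundedness of the level sets into norm-continuity of the dual optimizer requires careful bookkeeping of polars and of the bidual embedding, and is the technical heart of the proposition.
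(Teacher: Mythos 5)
The paper cites Proposition \ref{res-01} from \cite{Leo07a} and does not reproduce its proof, so I can only assess your argument on its own terms.

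Parts (a) and (b) are correct. The weak-duality inequality $\Las(x)\ge\langle\omega,x\rangle-\Lab(\omega)$ for all $x\in\XX$ and $\omega\in\Xs$ follows from the dual equality (\ref{xped}) together with $\Lab(\omega)=\Fb(T^*\omega)\ge\langle\omega,T\ell\rangle-\Fs(\ell)$; the strict separation in $(\Xs,\sigma(\Xs,\XX))$, whose continuous dual is $\XX$, the scaling $t\to\infty$ using $\La_o\ge0$ and $\Lab(\omega)<\infty$, and the reduction of (b) to (a) via $\bar x:=T\bar\ell$ are all sound.

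The argument for (a') and (b') has a genuine gap. You write that ``running the separation argument of (a) with $\bar x$ ranging over this polar shows that $\omega$ is bounded on a norm-neighborhood of $\XX$'', but the separation argument of (a) delivers only the bound $\langle\omega,\bar x\rangle\le\iota^*_{\dom\La_o}(\bar x)$, a support function over the \emph{whole} effective domain $\dom\La_o$, not over the bounded sublevel set $\{\La_o\le1\}$. Since $\dom\La_o$ is in general unbounded, $\iota^*_{\dom\La_o}$ can be $+\infty$ on every norm-neighborhood of $0$, and nothing about continuity of $\omega$ follows. What is actually needed is a quantitative coercivity estimate: if $\{\La_o\le1\}\subset\{\NL\cdot\le M\}$, then convexity together with $\La_o(0)=0$ yields $\La_o(y)\ge\NL y/M$ whenever $\NL y>M$, hence $\La_o(y)\ge\NL y/M-1$ for all $y\in\Yo$. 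This bounds $\Las$ on the dual ball: $\Las(\bar x)\le1$ whenever $\|\bar x\|_{\XX}\le1/M$. Inserting this into the weak-duality inequality at $t=1$ applied to $\pm\bar x$ gives $|\langle\omega,\bar x\rangle|\le1+\Lab(\omega)<\infty$ on that ball, hence $\omega\in\XX'=\YY''$. A parallel estimate on $\Fo$ (using $\NF\cdot$-boundedness of its level sets to bound $\Fs$ on a ball of $\LL$) combined with $\langle T^*\omega,\ell\rangle\le\Fs(\ell)+\Fb(T^*\omega)$ places $T^*\omega$ in $\UU''$. Finally, once the memberships $\omega\in\YY''$ and $T^*\omega\in\UU''$ are in hand, the ``more precisely'' clauses are immediate from (a) and (b) because $\sigma(\YY'',\XX)$ (resp.\ $\sigma(\UU'',\LL)$) is simply the trace of $\sigma(\Xs,\XX)$ (resp.\ $\sigma(\Ls,\LL)$) on $\YY''$ (resp.\ $\UU''$); there is no need to ``replay the separation inside the biduals'' as you suggest.
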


\section{Kantorovich dual equality and existence of optimal plans}\label{sec:transport}

We apply the results of Section \ref{sec:abstractpb} to the
Monge-Kantorovich problem \eqref{MK}.

\subsection{Statement of the results}

The set of all probability measures $\pi$ on $\AB$ such that $\IAB
c\,d\pi<\infty$ is denoted $\Pc.$ Hence,
    $
    \Pcmn=\Pc\cap P\mn.
    $
It is immediate to see that
\begin{equation}
    \sup\eqref{K}\le\sup(\Kt)\le\inf\eqref{MK}
    \label{eq-95}
\end{equation}
where for the last inequality, we state $\inf\emptyset=+\infty$ as
usual.
\\
 In the next theorem, $\Pc$ will be
endowed with the weak topology $\sigma(\Pc,\mathcal{C}_c)$ where
$\mathcal{C}_c$ is the space of all continuous functions $u$ on
$\AB$ such that
$$
\Lim t \sup\{|u(x)|/(1+c(x)); x\in\AB, |u(x)|\ge t\}=0
$$
with the convention that $\sup\emptyset=0$ which implies that the
space $\CAB$ of bounded continuous functions on $\AB$ satisfies
    $
\CAB\subset \mathcal{C}_c.
    $

\begin{theorem}[Dual equality and
primal attainment]\label{res-MK}\
\begin{enumerate}
 \item The dual equality for \eqref{MK} is
\begin{equation}
    \inf\eqref{MK}=\sup\eqref{K}=\sup\eqref{Kt}\in [0,\infty].
    \label{eq-kanto}
\end{equation}
 \item Assume that there exists
some $\pi^o$ in $\Pmn$ such that $\IAB c\,d\pi^o<\infty.$ Then:
    \begin{enumerate}
            \item There is at least one optimal plan;
            \item Any minimizing sequence is relatively compact for the topology
            $\sigma(\Pc, \mathcal{C}_c)$ and all its cluster points are optimal plans.
   \end{enumerate}
\end{enumerate}
\end{theorem}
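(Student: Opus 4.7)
The plan is to recognize \eqref{MK} as an instance of the abstract problem $(P^x_o)$ from Section \ref{sec:abstractpb} and then invoke Theorem \ref{xP3}. First I would set up the correspondence: let $\mathcal{Y}_o=C_A\oplus C_B$ with $T_o^*(f,g)=f\oplus g$, and let $\mathcal{U}_o$ be a space of measurable functions on $A\times B$ that contains $T_o^*(\mathcal{Y}_o)=\{f\oplus g:f\in C_A,g\in C_B\}$ and also $C_{AB}$. The function $\Phi_o:\mathcal{U}_o\to[0,\infty]$ must be chosen so that its convex conjugate on the space of positive measures reproduces $\pi\mapsto \int c\,d\pi$; the natural candidate is the indicator $\Phi_o(u)=\iota_{\{u\le c\}}$, suitably rescaled (for instance through a Young-type functional built from $c$) so as to satisfy the norm-generating hypothesis \HFii. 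Hypotheses \HF\ then reduce to elementary convexity and non-degeneracy checks, and \HT\ is immediate since $T_o^*(f,g)=f\oplus g$ vanishes only for $f,g$ constant with $f+g=0$, so $\mathrm{ker}\,T_o^*=\{0\}$ once one restricts to a transversal of constants.

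Second, I would identify the objects. The algebraic dual $\mathcal{L}_o$ contains the space $M_{AB}$ of signed measures via $\langle u,\ell\rangle=\int u\,d\ell$, and $\Phi_o^*(\ell)=\int c\,d\ell$ precisely when $\ell$ is a nonnegative measure (else $+\infty$). The adjoint $T_o\ell=(\ell_A,\ell_B)$ reads off the marginals, so the constraint $T_o\ell=(\mu,\nu)$ together with finiteness of $\Phi_o^*(\ell)$ recovers exactly \eqref{MK} with $x=(\mu,\nu)$. Computing $\Lambda^*(\mu,\nu)=\sup_{f,g}\{\int f\,d\mu+\int g\,d\nu-\Phi_o(f\oplus g)\}=\sup\{\int f\,d\mu+\int g\,d\nu:f\oplus g\le c,\ f\in C_A,g\in C_B\}=\sup\eqref{K}$, yields the Kantorovich dual value on the right-hand side of (\ref{eq-kanto}). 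Theorem \ref{xP3}(a) then directly produces the chain $\inf\eqref{MK}=\Lambda^*(\mu,\nu)=\sup\eqref{K}$; the intermediate equality $\sup\eqref{K}=\sup(\widetilde{\mathrm K})$ follows from the sandwich (\ref{eq-95}). This proves assertion (1).

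Third, under the hypothesis that some $\pi^o\in P(\mu,\nu)$ has $\int c\,d\pi^o<\infty$, we have $\Lambda^*(\mu,\nu)\le \int c\,d\pi^o<\infty$, i.e.\ $(\mu,\nu)\in\dom\Lambda^*$. Theorem \ref{xP3}(c) then gives both the existence of an optimal $\ell\in\mathcal{L}$ and the $\sigma(\mathcal{L},\mathcal{U})$-cluster point property of every minimizing sequence, every such cluster point being itself optimal. To deliver assertion (2) I would identify these abstract objects with their probabilistic counterparts: any $\ell\in\mathcal{L}$ satisfying $T\ell=(\mu,\nu)$ with $\Phi^*(\ell)<\infty$ is necessarily a positive measure (by the form of $\Phi_o^*$) of total mass $\mu(A)=1$, hence a plan in $\mathcal{P}(\mu,\nu,c)$.

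The main obstacle is the last identification: translating the abstract weak topology $\sigma(\mathcal{L},\mathcal{U})$ into the stated $\sigma(\mathcal{P}_c,\mathcal{C}_c)$. This requires showing that the completion $\mathcal{U}$ of $(\mathcal{U}_o,|\cdot|_\Phi)$ contains $\mathcal{C}_c$ (so that test functions in $\mathcal{C}_c$ pair continuously with the sequence) and conversely that $\mathcal{U}$-duality is no finer than $\mathcal{C}_c$-duality on $\mathcal{P}_c(\mu,\nu)$. The decay condition defining $\mathcal{C}_c$—that $|u(x)|/(1+c(x))\to 0$ at infinity—is tailored to ensure exactly this, because the norm $|u|_\Phi$ is essentially controlled by $\sup|u|/(1+c)$, making elements of $\mathcal{C}_c$ approximable in $\mathcal{U}$ by bounded continuous truncations. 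Once this identification is in place, the abstract cluster-point conclusion delivers exactly (2)(b), and the abstract minimizer delivers (2)(a).
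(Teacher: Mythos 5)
Your overall strategy — casting \eqref{MK} as the abstract problem $(P^x_o)$ with $\Fo=\iota_{\{u\le c\}}$, $T_o^*(f,g)=\fog$, $T_o\ell=(\ell_A,\ell_B)$, computing $\Las(\mu,\nu)=\sup\eqref{K}$, invoking Theorem \ref{xP3}, and then sandwiching with \eqref{eq-95} to get $\sup(\Kt)$ — is exactly the route the paper takes. However, two places where you write ``elementary checks'' or ``by the form of $\Fos$'' are precisely where the proof would break down with your set-up, and are where the paper does real work.

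First, \HFiii\ is \emph{not} elementary with $\Uo\supset\CAB$ and $\Fo=\iota_{\{u\le c\}}$. If $\{c=\infty\}$ has nonempty interior and $u\in\CAB$ is a nonzero function supported there, then $tu\le c$ everywhere for every $t\in\R$, so $\Fo(tu)=0$ for all $t$ and \HFiii\ fails. This is not a cosmetic issue — it invalidates the invocation of Theorem \ref{xP3}. The paper fixes it by quotienting $\CAB$ and $C_A\times C_B$ by the equivalence relation $u\sim v\Leftrightarrow u_{|\s}=v_{|\s}$ with $\s=\{c<\infty\}$, i.e.\ $\Uo:=\CAB/\sim$ and $\Yo:=(C_A\times C_B)/\sim$. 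Your ``transversal of constants'' is the right move only when $c$ is finite everywhere; once $c=\infty$ on a nontrivial set, the kernel of the map $(f,g)\mapsto\fog_{|\s}$ is larger than constants, and the quotient must be by restriction to $\s$.

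Second, the step ``any $\ell\in\LL$ with $T\ell=(\mu,\nu)$ and $\Fs(\ell)<\infty$ is a positive measure of mass $1$'' is a genuine theorem, not a reading-off. An element of $\LL=\Uc'$ is a priori only a norm-continuous linear form on a completion of a space of continuous functions — a finitely additive object. Showing it is $\sigma$-additive (hence a bona fide probability measure in $\Pcmn$) requires tightness of $\mu$ and $\nu$ on the Polish spaces $A,B$: one produces compactly supported $f_\epsilon\in C_A$, $g_\epsilon\in C_B$ with $\int_A(1-f_\epsilon)\,d\mu,\int_B(1-g_\epsilon)\,d\nu\le\epsilon$, uses the marginal constraints to bound $\langle 1-f_\epsilon\otimes g_\epsilon,\ell\rangle$, and then carries out a Daniell-type argument. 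Likewise $\Fs(\ell)=\IAB c\,d\ell$ for probability measures $\ell$ supported in $\cl\s$ uses the lower semicontinuity of $c$ (write $c$ as an increasing limit of functions in $\CAB$ and pass to the limit by monotone convergence), together with a dominated-convergence lemma on $\Uc$. These are the content of Proposition \ref{res-02} and Lemma \ref{res-60} in the paper and must be supplied before the abstract conclusions of Theorem \ref{xP3} translate into statements about transport plans.

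Your treatment of the topology $\sigma(\Pc,\mathcal{C}_c)$ via $\mathcal{C}_c\subset\Uc$ is correct in spirit and matches Proposition \ref{res-61}-(3), so that part is fine once the identifications above are in place.
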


\begin{proof}
It directly follows from Theorem \ref{xP3} with Proposition
\ref{res-61}-(3) and Proposition \ref{res-02} which are stated and
proved below. About statement (1), note that the dual equality
(\ref{xped}) is $\inf\eqref{MK}=\sup\eqref{K}$ and conclude with
(\ref{eq-95}).
\end{proof}

Except for the appearance of the dual problem ($\Kt$) and for the
statement (2-b) with the topology $\sigma(\Pc, \mathcal{C}_c)$,
this result is well-known. The dual equality (\ref{eq-kanto}) is
the Kantorovich dual equality. The proof of Theorem \ref{res-MK}
will be an opportunity to make precise the abstract material
$\Fo,$ $\Uo,$ $T_o$ and so on in terms of the Monge-Kantorovich
problem.

\subsection{The spaces and functions}

Let us particularize the spaces and functions of Section
\ref{sec:abstractpb} which are necessary for solving the
Monge-Kantorovich problem.

\subsubsection*{Description of $\Uo,$ $\Lo,$ $\Fo$ and $\Fos$}

Define
\begin{equation*}
    \s:=\{(a,b)\in \AB;c(a,b)<\infty\}
\end{equation*}
the effective domain of $c$ and consider the equivalence relation
on $\CAB$ defined for any $u,v$ in $\CAB$ by
\begin{equation*}
    u\sim v\Leftrightarrow u_{|\s}=v_{|\s},
\end{equation*}
which means that $u$ and $v$ match on $\s.$ Take
\begin{equation*}
    \Uo:=\CAB/\sim
\end{equation*}
the set of all equivalence classes and
\begin{equation*}
    \Fo(u):=\iota_{\Gamma}(u)=\left\{%
\begin{array}{ll}
    0, & \hbox{if }u\le c \\
    +\infty, & \hbox{otherwise} \\
\end{array}%
\right.    ,\quad u\in\Uo
\end{equation*}
the convex indicator function of
\begin{equation*}
    \Gamma=\{u\in\Uo;u\le c\}.
\end{equation*}
Assumption \HFi\ is obviously satisfied and it is easy to see that
\HFiii\ also holds. Note that, without factorizing by the
equivalence relation in the definition of $\Uo,$ \HFiii\ would
fail if $c$ is infinite on some nonempty open set. For \HFii\ to
be satisfied, assume that
\begin{equation*}
    c\ge1.
\end{equation*}
Doing this, one doesn't loose any generality since changing $c$
into $c+1$ raises $\inf\eqref{MK}$ by $+1$ without changing the
minimizers.
\\
The algebraic dual space $\Lo=\Uo^*$ of $\Uo$ is identified as a
subset of $\CAB^*$ as follows. Any $\ell\in\CAB^*$ is in $\Lo$ if
and only if for all $u,v\in\CAB$ such that $u_{|\s}=v_{|\s},$ we
have $\ell(u)=\ell(v).$ One shows at Proposition \ref{res-16}
below that any $\ell\in\CAB^*$ which is in $\Lo$ has its support
included in the closure $\cl\s$ of $\s$ in the sense of the
following
\begin{definition}\label{def-01}
For any linear form $\ell$ on the space of all (possibly
unbounded) continuous functions on $\AB,$ we define the
\emph{support} of $\ell$ as the subset of all $(a,b)\in\AB$ such
that for any open neighborhood $G$ of $(a,b),$ there exists some
function $u$ in $\CAB$ satisfying $\{u\not=0\}\subset G$ and
$\langle u,\ell\rangle \not=0.$ It is denoted by
$\mathrm{supp\,}\ell.$
\end{definition}
\begin{proposition}\label{res-16}
For any $\ell\in\Lo,$ we have $\supp\ell\subset\cl\s.$
\end{proposition}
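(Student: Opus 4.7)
The plan is to prove the contrapositive: if $(a,b)\notin\cl\s$, then $(a,b)\notin\supp\ell$. This is a direct unravelling of the definitions, and no deep tools should be needed; the only nontrivial point is to exploit the quotient structure of $\Uo=\CAB/\sim$ correctly.

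Fix $(a,b)\in\AB\setminus\cl\s$. Since $\cl\s$ is closed in the Polish space $\AB$, there is an open neighborhood $G$ of $(a,b)$ such that $G\cap\s=\emptyset$; equivalently, every point of $G$ lies in $\{c=\infty\}$. I now want to show that for every $u\in\CAB$ with $\{u\neq 0\}\subset G$, one has $\langle u,\ell\rangle=0$, which by Definition \ref{def-01} is precisely the statement that $(a,b)\notin\supp\ell$.

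So take any such $u\in\CAB$ with $\{u\neq 0\}\subset G$. Since $G\cap\s=\emptyset$, the function $u$ vanishes identically on $\s$, i.e.\ $u_{|\s}=0=0_{|\s}$. By the very definition of the equivalence relation $\sim$ on $\CAB$, this means $u\sim 0$, so $u$ and $0$ represent the same element of $\Uo=\CAB/\sim$. Because $\ell\in\Lo=\Uo^{*}$ descends to the quotient (that is, $\ell$ is insensitive to the choice of representative within a $\sim$-class), we conclude
\begin{equation*}
\langle u,\ell\rangle=\langle 0,\ell\rangle=0.
\end{equation*}

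Since this holds for every continuous bounded $u$ with support contained in the open neighborhood $G$ of $(a,b)$, Definition \ref{def-01} yields $(a,b)\notin\supp\ell$. Thus $\supp\ell\subset\cl\s$, as required. The only subtlety worth flagging is the quotient step: the assertion $\ell(u)=\ell(v)$ whenever $u_{|\s}=v_{|\s}$ is exactly the identification of $\Lo$ as a subspace of $\CAB^{*}$ given just before the proposition, and that is what makes the argument go through — otherwise a continuous bounded function supported outside $\s$ could in principle be paired nontrivially with an element of $\CAB^{*}$.
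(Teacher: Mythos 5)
Your proof is correct and follows essentially the same route as the paper: take a point outside $\cl\s$, choose an open neighborhood $G$ disjoint from $\s$, observe that any $u\in\CAB$ supported in $G$ vanishes on $\s$, and then invoke the defining property of $\Lo$ (insensitivity to the choice of representative modulo $\sim$) to conclude $\langle u,\ell\rangle=0$. The only cosmetic difference is that you phrase the key step explicitly as $u\sim 0$, whereas the paper states directly that $u_{|\s}=0$ forces $\langle u,\ell\rangle=0$; the content is the same.
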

\begin{proof}
As $\ell\in\Lo,$ for all $u\in \CAB$ such that $u_{|\s}=0,$ we
have $\langle u,\ell\rangle=0.$ Take $x\not\in\cl\s.$ There exists
an open neighborhood $G$ of $x$ such that $G\cap\s=\emptyset.$ For
all $u\in\CAB$ such that $\{u\not=0\}\subset G,$ we have
$u_{|\s}=0$ so that $\langle u,\ell\rangle=0.$ This proves that
$x\not\in\supp\ell.$
\end{proof}

Denote $\Cs$ the space of the restrictions $u_{|\s}$ of all the
functions $u$ in $\CAB.$

\begin{remark}
Beware, $\Cs$ is smaller than the space $C_\s$ of all bounded and
continuous functions on $\s.$ It consists of the functions in
$C_\s$ which admit a continuous and bounded extension to $\AB.$
\end{remark}
Identifying
\begin{equation}\label{eq-03}
    \Uo\cong \Cs,
\end{equation}
one sees with Proposition \ref{res-16} that for all $ \ell\in\Lo,$
\begin{equation*}
    \Fos(\ell)
    :=\sup\{\langle u,\ell\rangle; u\in\Cs,u\le c\}.
\end{equation*}

\subsubsection*{Description of $\Yo,$ $\Xo,$ $T_o^*$ and $T_o$}
Consider
\begin{equation*}
    \Yo:=(C_A\times C_B)/\sim
\end{equation*}
where $C_A,$ $C_B$ are the spaces of bounded continuous functions
on $A,$ $B$ and $\sim$ is a new equivalence relation on $C_A\times
C_B$ defined for all $ f, f'\in C_A,$ $ g, g'\in C_B$ by
$$
( f, g)\sim( f', g')\stackrel{\mathrm{def}}\Leftrightarrow
 f\oplus g\sim f'\oplus g'\Leftrightarrow
 f\oplus g_{|\s}= f'\oplus g'_{|\s}
$$
The algebraic dual space $\Xo=\Yo^*$ of $\Yo$ is identified as a
subset of $\CAs\times\CBs$ as follows. Any
$(k_1,k_2)\in\CAs\times\CBs$ is in $\Xo$ if and only if for all $
f, f'\in C_A,$ $ g, g'\in C_B$  such that $ f\oplus g_{|\s}=
f'\oplus g'_{|\s},$ we have $\langle  f,k_1\rangle+\langle
 g,k_2\rangle=\langle  f',k_1\rangle+\langle  g',k_2\rangle.$
\\
 It is immediate to
see that the operator $T_o^*$ defined by
\begin{equation*}
    T_o^*( f, g):= f\oplus g_{|\s},\quad ( f, g)\in\Yo
\end{equation*}
satisfies the assumptions \HT. In particular, one sees that
$T_o^*(\Yo)\subset\Cs.$
\\
Define the $A$ and $B$-marginal projections $\ell_A\in\CAs$ and
$\ell_B\in\CBs$ of any $\ell\in\CABs$ by
$$
\langle f,\ell_A\rangle:=\langle f\otimes 1,\ell\rangle\quad
\textrm{and}\quad \langle g,\ell_B\rangle:=\langle 1\otimes
g,\ell\rangle
$$
for all $ f\in C_A$ and all $ g\in C_B.$ Since for all $( f,
g)\in\Yo,$
 $\langle T_o^\ast( f, g),\ell\rangle_{\Cs,\Lo}
 =\langle f\oplus g,\ell\rangle_{\Cs,\Lo}
 =\langle f,\ell_A\rangle+\langle g,\ell_B\rangle
 =\langle( f, g),(\ell_A,\ell_B)\rangle,$
one obtains that for all $\ell\in\Lo\subset\CAB^*,$
\begin{equation*}
    T_o(\ell)=(\ell_A,\ell_B)\in\Xo\subset C_A^*\times C_B^*.
\end{equation*}

\subsubsection*{Description of $\UU$ and $\YY$}

For each $u\in C_{|\s},$
$\Fo^{\pm}(u):=\max(\Fo(u),\Fo(-u))=\iota_{\{|u|\le c\}}.$ Hence,
 $\NF u$ is the norm
 $$
\Nc u:=\sup_\s|u|/c,\quad u\in\Cs
 $$
Recall that it is assumed that $c\ge1.$ Let us denote the Banach
space
$$\UU:=U_c$$ which is the completion of $\Cs$ with respect to the
norm $ \Nc\cdot.$

\begin{proposition}\label{res-61}\
\begin{enumerate}
    \item $\Uc$ is a space of functions on $\s$ which are
    continuous in restriction to each level set $\{c\le \alpha\},$
    $\alpha\ge1.$ In particular, any $u\in\Uc$ is a measurable function.
    \item Even if $c$ is finite everywhere, $\Uc$ may contain
    functions which are not continuous.
    \item $\mathcal{C}_c\subset\Uc.$
\end{enumerate}
\end{proposition}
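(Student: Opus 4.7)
For part (1), my plan is to exploit the elementary bound $\sup_{\{c\le\alpha\}}|u_n-u_m|\le\alpha\,\Nc{u_n-u_m}$, which follows immediately from the definition of $\Nc\cdot$ and shows that any Cauchy sequence $(u_n)$ in $(\Cs,\Nc\cdot)$ is uniformly Cauchy on each level set $\{c\le\alpha\}$, $\alpha\ge 1$. Since the members of $\Cs$ are restrictions to $\s$ of continuous functions on $\AB$, a uniform limit $u$ of such a sequence is continuous on $\{c\le\alpha\}$ for every $\alpha$. Measurability comes for free: since $c<\infty$ on $\s$ we have $\s=\bigcup_{n\ge 1}\{c\le n\}$, each set $\{c\le n\}$ is Borel (closed, by lower semicontinuity of $c$), and $u$ is continuous hence Borel measurable on each. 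A countable-union argument then propagates Borel measurability to all of $\s$.

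For part (2), I plan to exhibit an explicit example where $c$ is finite everywhere but $\Uc$ contains a discontinuous function. Take $A=B=\R$ and
\[
c(a,b)=\begin{cases}1+1/(a^{2}+b^{2})&\text{if }(a,b)\neq(0,0),\\ 1&\text{if }(a,b)=(0,0);\end{cases}
\]
this $c$ is lower semicontinuous (continuous off the origin, with $\liminf_{(a,b)\to 0}c(a,b)=+\infty\ge c(0,0)$), is finite everywhere, and obeys $c\ge 1$. The function $u=\1_{\R^{2}\setminus\{0\}}$ is the $\Nc\cdot$-limit of bump-cutoffs $u_{n}=1-\phi_{n}$, where $\phi_{n}\in C_b(\R^2)$ takes values in $[0,1]$, equals $1$ at the origin, and is supported in $\{a^{2}+b^{2}\le 1/n\}$: on that support $c\ge 1+n$, whence $\Nc{u-u_{n}}\le 1/(1+n)\to 0$. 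Hence $u\in\Uc$ even though $u$ is discontinuous at the origin. The only step that calls for any ingenuity is designing this cost: one must arrange that points close together in $\AB$ can have wildly different values of $c$, so that the level-set analysis of part (1) does \emph{not} automatically upgrade to continuity on all of $\s$. This is precisely the obstruction that prevents a cleaner characterisation of $\Uc$ in general.

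For part (3), given $u\in\mathcal{C}_c$ I would truncate at level $n$ by setting $u_{n}=(-n)\vee(u\wedge n)\in\CAB$. Then $|u-u_{n}|=(|u|-n)_{+}\le|u|\,\1_{\{|u|>n\}}$, and using $c\ge 1$, which yields $1/c\le 2/(1+c)$, one obtains
\[
\Nc{u-u_{n}}\le 2\sup\bigl\{|u(x)|/(1+c(x));\,x\in\AB,\,|u(x)|\ge n\bigr\}.
\]
The defining property of $\mathcal{C}_c$ says exactly that this right-hand side tends to $0$ as $n\to\infty$, so $u_{n}\to u$ in $\Nc\cdot$, and the restriction to $\s$ of $u$ lies in the completion $\Uc$.
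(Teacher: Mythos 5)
Your proof is correct and follows essentially the same route as the paper's: part (1) compares the $\Nc{\cdot}$-norm to uniform estimates on the level sets $\{c\le\alpha\}$ (the paper phrases this through a pointwise-limit plus modulus-of-continuity inequality, but the content is identical), part (2) is the paper's counterexample up to replacing the line singularity $1+1/a^2$ by a radial one, and part (3) is precisely the paper's truncation argument $u_n=(-n)\vee u\wedge n$ with the details written out.
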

\begin{proof}
 \boulette{(1)}
Let $\seq un$ be a $\Nc\cdot$-Cauchy sequence in $\Cs.$ For all
$x\in\s,$  $\Lim n u_n(x):=u(x)$ exists since $(u_n(x))_{n\ge1}$
is a Cauchy sequence in $\R.$ For each $\epsilon>0,$ and there are
large enough $n,m$ such that
$\sup_{x\in\s}|u_n(x)-u_m(x)|/c(x)\le\epsilon$ for all $x\in\s.$
Letting $m$ tend to infinity leads us to
$|u_n(x)-u(x)|/c(x)\le\epsilon$ for all $x\in\s,$ which gives us
$$
\Lim n \Nc{u_n-u}=0
$$
where the definition $\|v\|_c:=\sup_\s|v|/c$ still holds for any
function $v$ on $\s.$ As for any $x,y\in\s,$
\begin{equation*}
   \frac{u(x)-u(y)}{c(x)+c(y)}
   \le 2\|u-u_n\|_c+\frac{u_n(x)-u_n(y)}{c(x)+c(y)}
   \le 2\|u-u_n\|_c+ |u_n(x)-u_n(y)|,
\end{equation*}
the announced continuity result follows from the above limit. The
measurability statement follows from this continuity result and
the measurability of $c.$

 \Boulette{(2)}
To see this, take
\begin{enumerate}[$\ast$]
    \item $A=B=\R;$
    \item $c(a,b)=1+1/a^2$ for any $(a,b)$ with $a\not=0,$ and
    $c(0,b)=1$ for all real $b;$
    \item $0\le u_n\le1$ with $u_n(a,b)=0$ if $a\le -1/n,$
    $u_n(a,b)=1$ if $a\ge 1/n$ and $u_n(0,b)=1/2$ for all real
    $b.$
\end{enumerate}
Then, $\seq un$ admits the limit $u$ in $\Uc$ with $u(a,b)=0$ if
$a<0,$ $u(a,b)=1$ if $a>0$ and $u(0,b)=1/2.$

 \Boulette{(3)}
By the very definition of $\mathcal{C}_c,$ one sees that for any
$u\in \mathcal{C}_c,$ defining $u_n:=(-n)\vee u\wedge n\in\CAB,$
the sequence $\seq un$ converges to $u$ in $\Uc.$
\end{proof}

The norm on $\Yo$ is given by
\begin{equation*}
    |( f, g)|_\La:=\| f\oplus g\|_c=\sup_\s| f\oplus g|/c,\quad
    ( f, g)\in\Yo.
\end{equation*}
Identifying
\begin{equation*}
    \Yo\cong (C_A\oplus C_B)_{|\s}
\end{equation*}
by means of $( f, g)\in\Yo\mapsto f\oplus g_{|\s}\in(C_A\oplus
C_B)_{|\s}\subset\Cs,$ the $|\cdot|_\La$-completion $\YY$ of $\Yo$
is identified with the $\|\cdot\|_c$-closure of $(C_A\oplus
C_B)_{|\s}$ in $\Uc.$

\subsubsection*{Description of $\LL,$ $\XX,$ $\La$ and $\Las$}

The topological dual space of $\Uc$
\begin{equation*}
    \LL:= \Uc'
\end{equation*}
is equipped with the dual norm
\begin{equation*}
    \|\ell\|_c^*=\sup\{\langle u,\ell\rangle; u\in \Uc,
    \|u\|_c\le1\},\quad \ell\in \Uc'.
\end{equation*}
The function $\La$ is given by
\begin{equation*}
    \La( f, g)=\iota_{\{ f\oplus  g\le c\}},\quad
     f\oplus g\in (C_A\oplus C_B)_{|\s}
\end{equation*}
and the corresponding norm is
\begin{equation*}
    |( f, g)|_\La=\| f\oplus  g\|_c,\quad
     f\oplus g\in (C_A\oplus C_B)_{|\s}.
\end{equation*}
The topological dual space of $\YY$ is
\begin{equation*}
    \XX=\{(k_1,k_2)\in\Xo; |(k_1,k_2)|_\La^*<\infty \}
\end{equation*}
with
\begin{eqnarray*}
  |(k_1,k_2)|_\La^*
  &=& \sup\{\langle f,k_1\rangle+\langle g,k_2\rangle; ( f, g)\in\Yo,\| f\oplus g\|_c\le1\} \\
  &=& \inf\{\|\ell\|_c^*;\ell\in \Uc':\ell_A=k_1,\ell_B=k_2\}
\end{eqnarray*}
where this last equality is a direct consequence of the dual
equality (\ref{xped}). Hence,  $k=(k_1,k_2)\in\CAs\times\CBs$
belongs to $\XX$ if and only if there exists some $\ell(k)\in \LL$
such that $\langle f,k_1\rangle+\langle g,k_2\rangle=\langle
f\oplus g,\ell(k)\rangle$ for all $( f, g)\in\Yo$ and $\Lim
n\langle u_n,\ell(k)\rangle=0$ for any sequence $\seq un$ in
$\CAB$ such that $\Lim n \|u_n\|_c=0.$
\\
The function $\Las$ is given by
\begin{equation*}
    \Las(k_1,k_2)=\sup\{\langle f,k_1\rangle+\langle g,k_2\rangle; ( f, g)\in\Yo, f\oplus g\le
    c\}, \quad k=(k_1,k_2)\in\XX.
\end{equation*}

\subsubsection*{Description of $\Xs$ and $T^*$}

Seeing $\XX$ as a subspace of $\CAs\times\CBs,$ $\Xs$ is a
subspace of $(\CAs\times\CBs)^*.$ By the axiom of choice, there
exists a subspace $\mathcal{Z}$ of $\CAs\times\CBs$ in direct sum
with $\XX:$ $\CAs\times\CBs=\XX\oplus\mathcal{Z}.$ To any
$\omega\in\Xs,$ one associates its extension $\bar{\omega}$ to
$\CAs\times\CBs$ characterized by $\bar{\omega}_{|\mathcal{Z}}=0$
and $\bar{\omega}_{|\XX}=\omega.$ This permits us to define the
marginal projections $\omega_A$ and $\omega_B$ and their tensor
sum $\omega_A\oplus\omega_B\in \Uc^{\prime\ast}$ as follows. For
all $(k_1,k_2)\in\XX,$
\begin{eqnarray*}
  \langle\omega,(k_1,k_2)\rangle_{\Xs,\XX}
  &=&  \langle \bar{\omega},(k_1,k_2)\rangle_{(\CAs\times\CBs)^*,\CAs\times\CBs}\\
  &=&  \langle \bar{\omega},(k_1,0)+(0,k_2)\rangle_{(\CAs\times\CBs)^*,\CAs\times\CBs}\\
  &:=& \langle \omega_A,k_1\rangle_{C_A^{**},\CAs}
        +\langle \omega_B,k_2\rangle_{C_B^{**},\CBs}\\
  &:=& \langle \omega_A\oplus\omega_B,\ell(k)\rangle_{\Uc^{\prime\ast},\Uc'}
\end{eqnarray*}
For any $\omega\in\Xs$ and $\ell\in \Uc',$ $ \langle
T^*\omega,\ell\rangle_{\Uc^{\prime\ast},\Uc'}
  =\langle \omega,(\ell_A,\ell_B)\rangle_{\Xs,\XX}
  := \langle \omega_A\oplus\omega_B,\ell\rangle_{\Uc^{\prime\ast},\Uc'}$
  which means that
\begin{equation}\label{eq-92}
    T^*\omega=\omega_A\oplus\omega_B\in \Uc^{\prime\ast},\quad
    \omega\in\Xs.
\end{equation}

\subsection{The connection with (MK)}
The connection with the Monge-Kantorovich problem is given at
Proposition \ref{res-02} below.  The modified primal problem is
\begin{equation}
    \textsl{minimize } \Fs(\ell) \textsl{ subject to
    }\ell_A=\mu \textsl{ and } \ell_B=\nu,\quad \ell\in \Uc'
    \tag{$P_1$}
\end{equation}
where
\begin{equation*}
  \Fs(\ell)=\sup\{\langle u,\ell\rangle; u\in \Uc, u\leq
    c\},\quad \ell\in \Uc'.
\end{equation*}

\begin{definition}\
\begin{enumerate}[(a)]
    \item One says that $\ell\in \Uc'$ \emph{acts as a probability measure}
if there exists $\tilde{\ell}\in\PAB$ such that
$\supp\tilde{\ell}\subset\cl\s$ and for all $u\in \CAB,$ $\langle
\uS,\ell\rangle=\IS u\,d\tilde{\ell}.$  In this case, we write:
$\ell\in\PS.$
    \item One says that $\ell\in \Uc'$ stands in $\Pc$ if there exists
$\tilde{\ell}\in\Pc$ such that $\supp\tilde{\ell}\subset\cl\s$ and
for all $u\in \Uc,$ $\langle \uS,\ell\rangle=\IS
u\,d\tilde{\ell}.$  In this case, we write: $\ell\in\Pc.$
\end{enumerate}
\end{definition}
Of course, if there exists $\tilde{\ell}$ satisfying (a), it
belongs to $\Pc$ and is unique since any probability measure on a
metric space is determined by its values on the continuous bounded
functions. This explains why the notation $\ell\in\Pc$ in (b) is
not misleading.
\\
Note also that any probability measure $\tilde{\ell}\in\Pc$ has a
support included in $\cl\s.$ Since $\AB$ is a metric space, for
any $\ell\in\Pc$ acting as a measure, $\supp\ell$ in the sense of
Definition \ref{def-01} matches with the usual support of the
measure $\tilde{\ell}.$

\begin{proposition}\label{res-02}
For all $\ell\in \Uc',$
\begin{enumerate}[(a)]
    \item $\Fs(\ell)<\infty\Rightarrow \ell\geq 0,$
    \item $\Fs(\ell)<\infty\Rightarrow \supp\ell\subset\cl\s,$
    \item $[\ell\geq 0,\supp\ell\subset\cl\s, \ell_A=\mu \textsl{ and } \ell_B=\nu]\Rightarrow
    \ell\in\PS$ and
    \item for all $\ell\in\PS,$ $\Fs(\ell)=\IS c\,d\ell\in [0,\infty].$
\end{enumerate}
It follows that  $\dom\Fs\subset\Pc$ and the problems \eqref{MK}
and \emph{$(P_1)$} share the same values and the same minimizers.
\end{proposition}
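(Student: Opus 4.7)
The plan is to establish the four assertions (a)--(d) in turn and then chain them for the identification of \eqref{MK} with $(P_1)$. For (a) I argue by contradiction: if $\Fs(\ell)<\infty$ and $\ell\not\ge 0$, pick $u\in\Uc$ with $u\ge 0$ and $\langle u,\ell\rangle<0$; for every $t>0$ the element $-tu\in\Uc$ satisfies $-tu\le 0\le c$ and so is admissible in the supremum defining $\Fs(\ell)$, whence $\Fs(\ell)\ge -t\langle u,\ell\rangle\to+\infty$, a contradiction. Assertion (b) is immediate from Proposition~\ref{res-16}: $\Cs=\Uo$ is dense in $\Uc$ by construction, so restriction yields a canonical injection $\Uc'\hookrightarrow\Lo$; the notion of support in Definition~\ref{def-01} only involves the action of $\ell$ on $\CAB$, and Proposition~\ref{res-16} gives $\supp\ell\subset\cl\s$ directly.

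The crux is (c). Given $\ell$ satisfying its hypotheses, I would introduce the positive linear form $\tilde\ell$ on $\CAB$ defined by $\tilde\ell(u):=\langle u_{|\s},\ell\rangle$, which is normalized thanks to the marginal identity $\tilde\ell(1)=\langle 1,\ell_A\rangle=\mu(A)=1$. The key step is to upgrade it to a Radon probability measure on the Polish product $\AB$ via a tightness argument driven by the marginals. Given $\epsilon>0$, Prokhorov's theorem produces compacts $K_A\subset A$, $K_B\subset B$ with $\mu(A\setminus K_A),\nu(B\setminus K_B)\le\epsilon$, and Urysohn furnishes $\chi_A\in C_A,\chi_B\in C_B$ valued in $[0,1]$ and equal to $1$ on $K_A,K_B$. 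The pointwise bound $1-\chi_A(a)\chi_B(b)\le(1-\chi_A(a))+(1-\chi_B(b))$ combined with the marginal identities gives $\tilde\ell(1-\chi_A\otimes\chi_B)\le 2\epsilon$; since $K_A\times K_B$ is compact in $\AB$, this tightness together with a Riesz-type representation yields a unique Radon probability $\tilde\ell\in\PAB$. The inclusion $\supp\tilde\ell\subset\cl\s$ follows: for any compact $K\subset\AB\setminus\cl\s$, Urysohn provides $u\in\CAB$ with $0\le u\le 1$, $u\equiv 1$ on $K$ and $\supp u\subset\AB\setminus\cl\s$, whence $u_{|\s}=0$ and $\tilde\ell(K)\le\tilde\ell(u)=\langle u_{|\s},\ell\rangle=0$, and inner regularity concludes.

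For (d), take $\ell\in\PS$ with representative $\tilde\ell$. Because $c\ge 0$ is \lsc\ on the metric space $\AB$, there exists an increasing sequence $(c^{(n)})$ of nonnegative bounded continuous functions with $c^{(n)}\nearrow c$ pointwise; each $c^{(n)}_{|\s}\in\Cs\subset\Uc$ lies below $c$, so $\Fs(\ell)\ge\langle c^{(n)}_{|\s},\ell\rangle=\IS c^{(n)}\,d\tilde\ell$, and monotone convergence gives $\Fs(\ell)\ge\IS c\,d\tilde\ell$, disposing of the case $\IS c\,d\tilde\ell=\infty$. When $\IS c\,d\tilde\ell<\infty$ the identity $\langle u_{|\s},\ell\rangle=\IS u\,d\tilde\ell$ extends from $u\in\CAB$ to $u\in\Uc$ by density of $\Cs$ in $\Uc$ together with the pointwise estimate $|u-u_n|\le\Nc{u-u_n}\,c$ and dominated convergence, so $u\le c$ yields $\langle u,\ell\rangle\le\IS c\,d\tilde\ell$ and hence $\Fs(\ell)\le\IS c\,d\tilde\ell$. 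The final inclusion then chains (a)--(d): any $\ell\in\Uc'$ feasible for $(P_1)$ with $\Fs(\ell)<\infty$ is represented by some $\pi\in\Pcmn$ via the above, and conversely any $\pi\in\Pcmn$ gives $\ell_\pi\in\Uc'$ through $\langle u,\ell_\pi\rangle:=\IS u\,d\pi$, which is continuous since $|\langle u,\ell_\pi\rangle|\le\Nc u\,\IAB c\,d\pi$; by (d) the objectives agree, so \eqref{MK} and $(P_1)$ share feasible sets, values and minimizers.

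The principal obstacle is step (c), the passage from a positive functional on $\CAB$ with the prescribed Polish marginals to a bona fide Radon probability measure on $\AB$; all the remaining pieces reduce to standard \lsc\ approximation, density and monotone/dominated convergence.
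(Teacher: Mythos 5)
Your proposal takes essentially the same route as the paper on all four items: the homogeneity trick for (a), the tightness-via-marginals argument with the elementary bound $1-st\le (1-s)+(1-t)$ for (c), the monotone/dominated-convergence sandwich for (d), and the identification of $(P_1)$ with \eqref{MK} by chaining (a)--(d). Two small variations are worth noting. For (b) you embed $\Uc'\hookrightarrow\Lo$ (by density of $\Cs$ in $\Uc$) and apply Proposition~\ref{res-16} directly, which shows $\supp\ell\subset\cl\s$ for \emph{every} $\ell\in\Uc'$, not only on $\dom\Fs$; the paper instead re-runs a homogeneity argument as in (a), with $\lambda\to+\infty$, under the hypothesis $\Fs(\ell)<\infty$. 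For (c) you package the passage from tightness to a Radon probability measure as a single citation of a general representation theorem, whereas the paper checks the Daniell $\sigma$-additivity criterion by hand, which is why it insists that the cut-offs $f_\epsilon,g_\epsilon$ have \emph{compact} support (so that $u_n(f_\epsilon\otimes g_\epsilon)\downarrow 0$ uniformly by Dini). Neither difference affects the substance or correctness.
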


 \proof
 Clearly, the last statement follows from the first part of the
 proposition.

\Boulette{(a)} Suppose that $\ell\in \Uc'$ is not in the
nonnegative\ cone. This means that there exists $u_o\in \Uc$ such
that $u_o\geq 0$ and $\langle u_o,\ell\rangle<0.$ Since  $u_o$
satisfies $\lambda u_o\leq 0\leq c$ for all $\lambda<0,$ we have
$\Fs(\ell)\geq \sup_{\lambda<0}\langle \lambda
u_o,\ell\rangle=+\infty.$ Hence, $\Fs(\ell)<\infty$ implies that
$\ell\geq 0$ and one can restrict our attention to the
nonnegative\ $\ell$'s.

\Boulette{(b)} Suppose ad absurdum that
$\supp\ell\not\subset\cl\s.$ Then, there exists a  nonnegative\
function $u_o\in\CAB$ such that $\{u_o>0\}\cap\s = \emptyset$ and
$\langle u_o,\ell\rangle>0.$ As $\lambda u_o\leq
c_{|\AB\setminus\s}\equiv\infty$ for all $\lambda>0,$
$\Fs(\ell)\geq \sup_{\lambda>0}\langle
 \lambda u_o,\ell\rangle=+\infty.$

\Boulette{(c)} Let us take $\ell\geq 0$ such that
$\supp\ell\subset\cl\s,$ $\ell_A=\mu$ and $\ell_B=\nu.$ It is
clear that $\langle 1,\ell\rangle=1.$ It remains to check that for
any $\ell\in \Uc'$
\begin{equation}\label{eq-18}
    [\ell\geq 0, \supp\ell\subset\cl\s, \ell_A=\mu \textsl{ and } \ell_B=\nu]\Rightarrow
    \ell \textrm{ is }\sigma\textrm{-additive,}
\end{equation}
rather than only additive. Since $\AB$ is a metric space, one can
apply an extension of the construction of Daniell's integrals
(\cite{Neveu}, Proposition II.7.2) to see that $\ell$ acts as a
measure if and only if for any  decreasing\  sequence $(u_n)$ of
continuous functions such that $0\leq u_n\leq 1$ for all $n$ and
$\lim_{n\rightarrow\infty} u_n=0$ pointwise, we have
$\lim_{n\rightarrow\infty}\langle {u_n},\ell\rangle=0.$ This
insures the $\sigma$-additivity of $\ell.$
\\
Unfortunately, this pointwise convergence of $(u_n)$ is weaker
than the uniform convergence with respect to which any $\ell\in
\Uc'$ is continuous. Except if $\AB$ is compact, since in this
special case, any  decreasing\  sequence of continuous functions
which converges pointwise to zero also converges uniformly on the
compact space $\cl\s.$
\\
So far, we have only used the fact that $\AB$ is a metric space.
We now rely on the Polishness of $A$ and $B$ to get rid of this
compactness restriction. It is known that any probability measure
$P$ on a Polish space $X$ is tight (i.e.\! a Radon measure): for
all $\epsilon>0,$ there exists a compact set $K_\epsilon\subset X$
such that $P(X\setminus K_\epsilon)\leq \epsilon$ (\cite{Neveu},
Proposition II.7.3). As in addition a Polish space is completely
regular, there exists a continuous function $f_\epsilon$ with a
compact support such  that $0\leq f_\epsilon\leq 1$ and $\int_X
(1-f_\epsilon)\,dP\leq \epsilon.$ This is true in particular for
the probability measures $\mu\in\PA$ and $\nu\in\PB$ which specify
the constraint in \eqref{MK}. Hence, there exist $ f_\epsilon\in
C_A$ and $ g_\epsilon\in C_B$ with compact supports such that
$0\leq
 f_\epsilon, g_\epsilon\leq 1$ and $0\leq\int_A
(1- f_\epsilon)\,d\mu,\int_B (1- g_\epsilon)\,d\nu \leq \epsilon.$
It follows with the elementary fact: $0\le1-st\le2-s-t$ for all
$0\le s,t\le1,$ that any nonnegative $\ell\in \Uc'$ with
$\ell_A=\mu$ and $\ell_B=\nu$ satisfies $0\leq \langle (1-
f_\epsilon\otimes g_\epsilon),\ell\rangle\leq 2\epsilon.$ With the
following easy estimate $0\leq \langle u_n,\ell\rangle\leq
2\epsilon +\langle u_n( f_\epsilon\otimes g_\epsilon),\ell\rangle
$ and the compactness of the support of $ f_\epsilon\otimes
g_\epsilon,$ one concludes that $\lim_{n\rightarrow\infty}\langle
u_n,\ell\rangle=0$ which proves (\ref{eq-18}).

\Boulette{(d)} Let us take $\ell\in\PS.$ As $c$ is bounded below
and \lsc, there exists a sequence $\seq cn$ in $\CAB$ such that
$0\le c_n\le c$ converges pointwise and increasingly to $c.$
Therefore,
\begin{equation}\label{eq-90}
    \int_\s c\,d\ell\stackrel{(a)}=\Lim n \int_\s c_n\,d\ell\stackrel{(b)}=\Lim n\langle
    c_n,\ell\rangle\le\Fs(\ell)
\end{equation}
where equality (a) holds by monotone convergence and equality (b)
holds since $c_n$ belongs to $\CAB.$
\\
Let us show the converse inequality: $\Fs(\ell)\le \int_\s
c\,d\ell.$ Let us first assume that $\ell\in\dom\Fs.$ It is proved
at Lemma \ref{res-60} below that for any $u\ge0$ in $\Uc,$
$\langle u,\ell\rangle=\int_\s u\,d\ell.$ It follows that
$\Fs(\ell)=\sup\{\langle u,\ell\rangle; u\in \Uc, u\leq c\}\le
\int_\s c\,d\ell.$
\\
If $\Fs(\ell)=\infty,$ there exists a sequence $\seq un$ in $\CAB$
such that $0\le u_n\le c$ and $\sup_n\int_\s u_n\,d\ell=\infty.$
Therefore, $\int c\,d\ell\ge \sup_n\int u_n\,d\ell=\infty.$
\endproof

\begin{lemma}\label{res-60}
For any $u\ge0$ in $\Uc$ and any $\ell\in\PS\cap\Uc'$ such that
$\Fs(\ell)<\infty,$ we have $\langle u,\ell\rangle=\int_\s
u\,d\ell.$
\end{lemma}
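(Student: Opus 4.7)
The plan is a one-step approximation argument. I would pick a sequence $(u_n)$ in $\Cs$ with $\|u_n-u\|_c\to 0$ and pass to the limit on both sides of the identity
$$\langle u_n,\ell\rangle=\int_\s u_n\,d\tilde\ell,$$
which holds because each $u_n$ is the restriction to $\s$ of some function in $\CAB$ and $\ell$ acts as $\tilde\ell\in\PS$. The left-hand side converges to $\langle u,\ell\rangle$ by continuity of $\ell\in\Uc'$. For the right-hand side, the definition of $\|\cdot\|_c$ gives the pointwise bound $|u_n-u|\le\|u_n-u\|_c\,c$ on $\s$, and this will let me control
$$\left|\int_\s u_n\,d\tilde\ell-\int_\s u\,d\tilde\ell\right|\le \|u_n-u\|_c\int_\s c\,d\tilde\ell,$$
which tends to zero provided $c$ is $\tilde\ell$-integrable.

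The only non-trivial point is therefore the $\tilde\ell$-integrability of $c$. I expect to supply this by the opening inequality in the proof of Proposition \ref{res-02}(d): the chain (\ref{eq-90}), which uses only a monotone approximation of $c$ from below by bounded continuous functions and the monotone convergence theorem, delivers $\int_\s c\,d\tilde\ell\le\Fs(\ell)<\infty$ without invoking the present lemma, so there is no circularity. Measurability of $u$ on $\s$ follows from Proposition \ref{res-61}(1), and the pointwise estimate $0\le u\le\|u\|_c\,c$ combined with the integrability of $c$ makes $\int_\s u\,d\tilde\ell$ well-defined and finite, so the passage to the limit on the right-hand side is legitimate.

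The main obstacle is really just the bookkeeping. Lemma \ref{res-60} is being applied in Proposition \ref{res-02}(d) precisely to establish the reverse inequality $\Fs(\ell)\le\int_\s c\,d\ell$; I must therefore be careful to invoke only the already-established half of (\ref{eq-90}), and not the full equality of Proposition \ref{res-02}(d), so that the argument is free of circularity. Once that is recognized, the proof reduces to density of $\Cs$ in $\Uc$ together with the dominated/elementary bound above.
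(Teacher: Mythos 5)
Your proof is correct and follows essentially the same route as the paper: approximate $u$ by a sequence from $\Cs$, use continuity of $\ell$ for the left-hand side, and control the right-hand side via the pointwise bound $|u_n-u|\le\|u_n-u\|_c\,c$ together with $\int_\s c\,d\tilde\ell\le\Fs(\ell)<\infty$ from (\ref{eq-90}). The only cosmetic difference is that you bound $\big|\int_\s u_n\,d\tilde\ell-\int_\s u\,d\tilde\ell\big|$ directly by $\|u_n-u\|_c\int_\s c\,d\tilde\ell$, whereas the paper invokes dominated convergence with the uniform majorant $(1+\|u\|_c)c$; both rest on the same ingredients and your observation about the non-circular use of (\ref{eq-90}) is exactly right.
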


\begin{proof}
There exists a sequence $\seq un$ in $\CAB$ such that $\Lim n
u_n=u$ in $\Uc.$ As $\ell$ belongs to $\Uc',$
\begin{equation}\label{eq-91}
    \Lim n \int_\s u_n\,d\ell=\Lim n \langle
    u_n,\ell\rangle=\langle u,\ell\rangle.
\end{equation}
On the other hand, $|u_n-u|\le\|u_n-u\|_c c$ implies that
$|u_n|\le [\|u\|_c+\|u_n-u\|_c]c.$ Hence, for some large enough
$n_o,$
\begin{equation*}
    |u_n|\le (1+\|u\|_c) c,\quad \forall n\ge n_o.
\end{equation*}
Together with (\ref{eq-90}), the assumption that
$\Fs(\ell)<\infty$ and the dominated convergence theorem, this
entails $\Lim n\int_\s u_n\,d\ell=\int_\s u\,d\ell.$ This and
(\ref{eq-91}) lead us to $\langle u,\ell\rangle=\int_\s u\,d\ell$
which is the desired result.
\end{proof}

\section{An abstract characterization of optimality}

The abstract characterization of optimality is stated in Theorem
\ref{res-20}. It will allow us to obtain as corollaries, an
explicit sufficient condition in Theorem \ref{res-05} and an
explicit necessary condition in Theorem \ref{res-73}.
 To prove it, one has to compute the extension
$\Fb.$ As it is the greatest convex $\sigma(\Uc^{\prime
*},\Uc')$-\lsc\ extension of $\Phi=\iota_\Gamma$ and
$\Gamma=\{u\in\Uo;u\leq c\}$ is a convex subset of $\Uo,$ we have
\begin{equation}\label{eq-72}
    \Fb(\xi)=\iota_{\overline{\Gamma}}(\xi),\quad \xi\in \Uc^{\prime *}
\end{equation}
where $\overline{\Gamma}$ is the $\sigma(\Uc^{\prime
*},\Uc')$-closure of $\Gamma.$ By (\ref{eq-92}), this gives
\begin{equation*}
    \overline{\La}(\omega)=\iota_{\overline{\Gamma}}(\omega_A\oplus\omega_B),\quad
    \omega \in\Xs
\end{equation*}\label{Kb}
and the extended problem ($\overline{D}^x$) is
\begin{equation}
   \textsl{maximize }
   \langle\omega_A,\mu\rangle+\langle\omega_B,\nu\rangle ,\quad
   \omega\in\XX^* \textsl{ such that } \omega_A\oplus\omega_B\in\overline{\Gamma}
   \tag{$\Kb$}
\end{equation}
Note that for this dual problem to be meaningful, it is necessary
that  $(\mu,\nu)\in\XX.$ This is realized if
$(\mu,\nu)\in\dom\Las$ or equivalently if $\inf\eqref{MK}<\infty.$

Applying the first part of Theorem \ref{T3b}, taking into account
the dual equality (\ref{eq-kanto}) and Proposition \ref{res-02}
gives the following

\begin{lemma}\label{res-17}
 Any $(\pi,\omega)\in\PAB\times \XX^*$ is a solution to \emph{(\MK,$\Kb$)} if and only if
\begin{equation}\label{eq-17}
\left\{\begin{array}{ll}
  (a) & \pi\in\Pcmn;\\
  (b) & \pi\in \partial_{\Uc'}\Fb(\eta) \textsl{ where } \\
  (c) & \eta=T^\ast \omega. \\
\end{array}\right.
\end{equation}
\end{lemma}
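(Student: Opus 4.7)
The plan is to apply the first part of Theorem \ref{T3b} at the constraint specifier $x=(\mu,\nu)$ and transcribe its conclusion into the concrete language of the Monge-Kantorovich problem via Proposition \ref{res-02}. With the identifications already established in Section \ref{sec:transport}, the abstract primal $P^{(\mu,\nu)}$ reads as $(P_1)$ and the abstract dual $\overline{D}^{(\mu,\nu)}$ reads as $(\Kb)$, so the equivalence in Theorem \ref{T3b}(1) directly connects the saddle-point characterization we want with joint solvability of the pair $(P_1,\Kb)$. Before starting, I would dispose of the degenerate case $\inf\eqref{MK}=\infty$, where no finite plan exists, both sides of the claimed equivalence are vacuous, and there is nothing to prove; so I may assume $(\mu,\nu)\in\XX$, equivalently $\Las(\mu,\nu)<\infty$.

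For the forward direction, I would start from a solution $(\pi,\omega)$ of $(\MK,\Kb)$. By Definition \ref{def-02}, an optimal plan is finite, so $\pi\in\Pcmn$ and in particular $T\pi=(\mu,\nu)$, which is (a). Proposition \ref{res-02} upgrades $\pi$ to a solution of $(P_1)=P^{(\mu,\nu)}$, while $\omega$ solves $\overline{D}^{(\mu,\nu)}=(\Kb)$ by assumption. Combined with the Kantorovich dual equality $\inf\eqref{MK}=\Las(\mu,\nu)$ provided by Theorem \ref{res-MK}(1), these are precisely the three clauses on the right-hand side of the equivalence in Theorem \ref{T3b}(1). Its left-hand side then delivers $T\pi=(\mu,\nu)$ (already known) together with $\pi\in\partial_{\Uc'}\Fb(T^*\omega)$, i.e.\ (b) and (c) with $\eta=T^*\omega$.

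For the converse, I would assume (a), (b), (c), read (a) as giving $T\pi=(\mu,\nu)$, and read (b)--(c) as $\pi\in\partial_{\Uc'}\Fb(T^*\omega)$; this is the left-hand side of the equivalence in Theorem \ref{T3b}(1). That implication yields immediately that $\pi$ solves $(P_1)$ and $\omega$ solves $(\Kb)$. Applying Proposition \ref{res-02} once more transports the first conclusion back to $\pi$ being optimal for \eqref{MK}, so $(\pi,\omega)$ is a solution to $(\MK,\Kb)$.

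I do not expect any substantial obstacle: the only subtle point is that the third clause (the dual equality) appearing on the right-hand side of Theorem \ref{T3b}(1) must be in force when we invoke the non-trivial direction of that equivalence, and this is granted for free in the Monge-Kantorovich setting by Theorem \ref{res-MK}(1) (which is itself Theorem \ref{xP3}(a) specialized to the present $\Fo$ and $T_o^*$). All remaining steps are a direct translation between the abstract dual pair $(P^x,\overline{D}^x)$ and the concrete pair $(\MK,\Kb)$.
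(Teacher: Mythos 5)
Your proposal is correct and follows essentially the same route as the paper: the paper itself obtains Lemma \ref{res-17} by "Applying the first part of Theorem \ref{T3b}, taking into account the dual equality (\ref{eq-kanto}) and Proposition \ref{res-02}," which is precisely the translation you carry out, including the observation that the dual equality clause in Theorem \ref{T3b}(1) is supplied unconditionally by Theorem \ref{res-MK}(1). Your explicit handling of the degenerate case $\inf\eqref{MK}=\infty$ and your use of Definition \ref{def-02}-3 to pass from "optimal" to "finite" (and hence $\pi\in\Uc'$) are sound bookkeeping that the paper leaves implicit.
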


As $\Fs$ and $\Fb$ are mutually convex conjugates, (\ref{eq-17}-b)
is equivalent to
\begin{equation}\label{eq-122}
    \eta\in \partial_{\Uc^{\prime *}}\Fs(\pi)
\end{equation}
and also equivalent to Fenchel's identity
\begin{equation}\label{eq-122b}
    \Fs(\pi)+\Fb(\eta)=\langle\eta,\pi\rangle
\end{equation}
and by Proposition \ref{res-02} this is also equivalent to
\begin{equation*}
    \left\{\begin{array}{l}
      \Fb(\eta)= 0\\
      \langle\eta,\pi\rangle=\IAB cd\pi.\\
    \end{array}\right.
\end{equation*}
Therefore, with Theorem \ref{T3b} one obtains the

\begin{theorem}\label{res-20}\
\begin{enumerate}
    \item Any $(\pi,\omega)\in \Pcmn\times \XX^*$ is a solution to \emph{(\MK,$\Kb$)} if and only if
\begin{equation}\label{eq-93}
     \left\{\begin{array}{ll}
    (a) & \omega_A\oplus\omega_B\in\overline{\Gamma}  \\
    (b) & \langle\omega,(\mu,\nu)\rangle=\IAB cd\pi;\\
    \end{array}\right.
\end{equation}

    \item Suppose in addition that
\begin{equation}\label{eq-20}
    (\mu,\nu)\in \diffdom \Las.
\end{equation}
Then, \emph{(\MK,$\Kb$)} admits a solution in $\Pcmn\times\XX^*.$
    \item Writing $\eta=\omega_A\oplus\omega_B,$
(\ref{eq-17}-b), (\ref{eq-122}), (\ref{eq-122b}) and (\ref{eq-93})
are equivalent statements.
\end{enumerate}
\end{theorem}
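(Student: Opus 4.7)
The plan is to deduce Theorem \ref{res-20} directly from the abstract machinery of Section \ref{sec:abstractpb}, specialized via the dictionary assembled in Section \ref{sec:transport}. The three pieces of input are: Lemma \ref{res-17}, the formula $\Fb=\iota_{\overline{\Gamma}}$ from \eqref{eq-72}, the identity $T^{*}\omega=\omega_{A}\oplus\omega_{B}$ from \eqref{eq-92}, and the representation $\Fs(\pi)=\IAB c\,d\pi$ for $\pi\in\Pcmn$ from Proposition \ref{res-02}-(d).

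For statement (1), I would start from Lemma \ref{res-17} and translate its three conditions into \eqref{eq-93}. Condition (c) together with \eqref{eq-92} gives $\eta=\omega_{A}\oplus\omega_{B}$. Since $\Fb=\iota_{\overline{\Gamma}}$, condition (b) can hold only if $\eta\in\overline{\Gamma}$, which is \eqref{eq-93}-(a); and in that case $\pi\in\partial_{\Uc'}\iota_{\overline{\Gamma}}(\eta)$ is equivalent to Fenchel's identity $\Fs(\pi)+\Fb(\eta)=\langle\eta,\pi\rangle$. Because $\pi\in\Pcmn$ by (a), Proposition \ref{res-02}-(d) gives $\Fs(\pi)=\IAB c\,d\pi$, and $\Fb(\eta)=0$ since $\eta\in\overline{\Gamma}$; so Fenchel's identity collapses to
\begin{equation*}
\langle\omega_{A}\oplus\omega_{B},\pi\rangle=\IAB c\,d\pi.
\end{equation*}
Finally, the adjoint relation yields $\langle\omega_{A}\oplus\omega_{B},\pi\rangle=\langle T^{*}\omega,\pi\rangle=\langle\omega,T\pi\rangle=\langle\omega,(\mu,\nu)\rangle$, since $T\pi=(\pi_{A},\pi_{B})=(\mu,\nu)$ for $\pi\in\Pmn$. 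This yields \eqref{eq-93}-(b), and the converse is the same chain read backwards.

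Statement (2) is then a direct invocation of Theorem \ref{T3b}-(2) with $x=(\mu,\nu)$: the constraint qualification \eqref{xCQbis} there is exactly \eqref{eq-20}, which guarantees that $P^{(\mu,\nu)}$ is attained in $\LL$ and $\overline{D}^{(\mu,\nu)}$ is attained in $\XX^{*}$. By Proposition \ref{res-02}, any minimizer of $P^{(\mu,\nu)}$ lies in $\Pcmn$ and solves \eqref{MK}, and by the characterization just proved the resulting couple solves $(\MK,\Kb)$. Statement (3) is pure convex conjugacy: since $\Fs$ and $\Fb$ are mutually convex conjugates (with $\Fs$ the restriction of $\Fos$ to $\LL$), the inclusions $\pi\in\partial_{\Uc'}\Fb(\eta)$ and $\eta\in\partial_{\Uc^{\prime *}}\Fs(\pi)$ and Fenchel's equality $\Fs(\pi)+\Fb(\eta)=\langle\eta,\pi\rangle$ are all equivalent, and the computation in (1) shows their equivalence with \eqref{eq-93}.

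The main subtlety I anticipate is making sure that evaluating $\Fs(\pi)$ as $\IAB c\,d\pi$ is legitimate exactly at the stage where it is needed; this relies on knowing a priori that $\pi$ acts as a probability measure in $\Pcmn$, which is precisely condition (a) of Lemma \ref{res-17} and so is available. Everything else is essentially a routine unpacking of Theorem \ref{T3b} in the Monge--Kantorovich dictionary.
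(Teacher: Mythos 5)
Your proposal is correct and follows essentially the same route as the paper: starting from Lemma \ref{res-17}, using $\Fb=\iota_{\overline{\Gamma}}$ from \eqref{eq-72} and $T^*\omega=\omega_A\oplus\omega_B$ from \eqref{eq-92}, invoking Proposition \ref{res-02}-(d) to identify $\Fs(\pi)=\IAB c\,d\pi$, and passing the Fenchel identity $\Fs(\pi)+\Fb(\eta)=\langle\eta,\pi\rangle$ through the adjoint relation $\langle\eta,\pi\rangle=\langle\omega,(\mu,\nu)\rangle$, with statements (2) and (3) handled by Theorem \ref{T3b}-(2) and standard convex conjugacy respectively. The paper presents the same chain in more compressed form in the paragraph between Lemma \ref{res-17} and the theorem statement, whereas you spell out the intermediate steps (the necessity of $\eta\in\overline{\Gamma}$ for the subdifferential to be nonempty, and the adjoint calculation) more explicitly.
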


One was allowed to apply the second part of Theorem \ref{T3b}
under the constraint qualification (\ref{xCQbis})=(\ref{eq-20}).
Let us give some details about this abstract requirement.

\begin{remark}\label{rem-04}
Note that if $\AB$ is an uncountable set,
$(\mu,\nu)\not\in\icordom\Las$ for all $\mu\in\PA,$ $\nu\in\PB.$
Indeed, for all $\pi\in \Pcmn$ one can find $(a_o,b_o)$ such that
with $\delta_{(a_o,b_o)}$ the Dirac measure at $(a_o,b_o),$
$\ell_t:=t\delta_{(a_o,b_o)}+(1-t)\pi\not\geq 0$ for all $t<0,$ so
that $\Fs_1(\ell_t)=+\infty$ (Proposition \ref{res-02}-a). This
shows that $[\ell_0,\ell_1]=[\pi,\delta_{(a_o,b_o)}]\subset
\dom\Fs_1$ while $\ell_t\not\in \dom\Fs_1$ for all $t<0.$ Hence,
$(\mu,\nu)\not\in\icordom\Las$ and one has to consider the
assumption (\ref{eq-20}) on $(\mu,\nu)$ rather than
$(\mu,\nu)\in\icordom\Las.$
\end{remark}

\begin{remarks}[Some remarks about $\Fs(\ell),$ $\Fs(|\ell|),$ $\Las(k)$ and
$\Las(|k|)$] Remark \ref{rem-04} shows that $\icordom\Las$ is
empty in general. The following remarks are motivated by the
problem of circumventing this restrictive property which stops us
from applying Theorem \ref{res-20} with an easy sufficient
condition for (\ref{eq-20}).
\begin{enumerate}[(a)]
    \item As $\Uc'$ is a Riesz space (it is the topological
    dual space of the Riesz space $C_{|\s}$), any $\ell$ in $\Uc'$
    admits an absolute value $|\ell|=\ell_++\ell_-$ and one can
    consider the convex and real-valued function
    $\Fs(|\ell|)=\Fs(\ell_+)+\Fs(\ell_-)$ on $\Uc'.$
    \item Rather than the positively homogeneous sublinear
    function $\Las(k)$ one could think of a \emph{real-valued}
    positively homogeneous function of the type $\Las(|k|),$ since its $\icordom$ is nonempty. But,
    unlike $\Uc',$ $\XX$ is not a Riesz space and  $\Las(|k|)$ is
    meaningless.
    \item The dual equality is $\Las(k)=\inf\{\Fs(\ell); \ell\in \Uc',
    T_o\ell=k\}$ and unlike $\Fs,$ the effective domain of
    $\Fs(|\ell|)$ is the whole space. It is natural to think of the function
    $J(k):=\inf\{\Fs(|\ell|); \ell\in \Uc', T_o\ell=k\}$ instead
    of $\Las.$   The corresponding dual equality is
    $J=\iota^*_{\Upsilon_o}$ where $\Upsilon_o:=\{( f', g')\in (C_A\oplus C_B)_{|\s};
    -c\le { f'\oplus g'}\le c\}.$ It follows that $J$ is a positively homogeneous sublinear
    function. But it is not true that $J$ and $\Las$ match on
    $\dom\Las.$ We have $J\le\Las$ and this inequality can be strict. To see this, let us consider the following example.
    Take $A=\{a,\alpha\},$ $B=\{b,\beta\},$
    $c(a,b)=c(a,\beta)=c(\alpha,b)=0$ and $c(\alpha,\beta)=1.$
    Clearly $\Las(\delta_\alpha,\delta_\beta)=c(\alpha,\beta)=1$
    while $J(\delta_\alpha,\delta_\beta)=\Fs(|\delta_{(a,\beta)}+\delta_{(\alpha,b)}-\delta_{(a,b)}|)=0.$
\end{enumerate}
\end{remarks}

Theorem \ref{res-20} is the core of the extended saddle-point
method applied to the Monge-Kantorovich problem. To prove a
practical optimality criterion one still has to translate these
abstract properties.

\section{A sufficient condition of optimality}\label{sec-02}

The aim of this section is to prove Theorem \ref{res-05}.

\subsection{Any finite strongly $c$-cyclically monotone plan is optimal}
Next lemma gives a characterization of the closed convex hull
$\cl\cv A$ of a set $A$ in terms of its support functional
$\iota^*_A.$
\begin{lemma}\label{res-63}
Let $X$ and $Y$ be two topological vector spaces in duality. For
any subset $A$ of $X,$ one has
\begin{equation*}
    x\in\cl\cv(A)\Leftrightarrow \langle x,y\rangle\le
    \iota_A^*(y),\ \forall y\in Y
\end{equation*}
where $\iota_A^*(y)=\sup_{z\in A}\langle z,y\rangle,$ $y\in Y.$
\end{lemma}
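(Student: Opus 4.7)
The plan is to prove the lemma as a standard consequence of the Hahn--Banach separation theorem in the dual pair $\langle X,Y\rangle$, handled by treating the two implications separately.

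For the forward implication, I would first observe that $\iota_A^*=\iota_{\cl\cv(A)}^*$. Indeed, for any $y\in Y$ and any convex combination $x=\sum_{i=1}^n\lambda_ia_i$ of points $a_i\in A$, linearity gives $\langle x,y\rangle=\sum_i\lambda_i\langle a_i,y\rangle\le\iota_A^*(y)$, and then $\sigma(X,Y)$-continuity of $\langle\cdot,y\rangle$ passes the inequality to the closure. Thus if $x\in\cl\cv(A)$, then $\langle x,y\rangle\le\iota_{\cl\cv(A)}^*(y)=\iota_A^*(y)$ for every $y\in Y$.

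For the reverse implication, I would argue by contraposition. Assume $x\notin\cl\cv(A)$. The set $\cl\cv(A)$ is a closed convex subset of $X$ for the topology $\sigma(X,Y)$, and $\{x\}$ is a compact convex subset disjoint from it. The geometric Hahn--Banach theorem in the dual pair $\langle X,Y\rangle$ then yields $y\in Y$ and $\alpha\in\mathbb{R}$ such that
\begin{equation*}
\langle x,y\rangle>\alpha\ge\langle z,y\rangle\quad\textrm{for all }z\in\cl\cv(A).
\end{equation*}
In particular $\iota_A^*(y)\le\alpha<\langle x,y\rangle$, which contradicts the hypothesis that $\langle x,y\rangle\le\iota_A^*(y)$ for all $y\in Y$.

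The only minor care needed concerns the edge case $A=\emptyset$, where $\iota_A^*\equiv-\infty$ and $\cl\cv(A)=\emptyset$, making both sides of the equivalence false for every $x$; and the case $\cl\cv(A)=X$, where both sides hold trivially. I do not expect any real obstacle: the argument is a routine bipolar-type computation, the substantive input being the availability of Hahn--Banach separation in the $\sigma(X,Y)$-topology, which is guaranteed by the assumption that $X$ and $Y$ are in (separating) duality.
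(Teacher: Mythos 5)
Your proof is correct, and it is a genuinely different route from the paper's. The paper dispenses with the two implications in one stroke by invoking the Fenchel--Moreau biconjugation theorem: since $\iota_A^{**}=\iota_{\cl\cv A}$ (the biconjugate of an indicator is the indicator of the $\sigma(X,Y)$-closed convex hull), and since $\iota_A^{**}(x)\ge\langle x,0\rangle-\iota_A^*(0)=0$ always, one gets
\begin{equation*}
x\in\cl\cv(A)\ \Leftrightarrow\ \iota_A^{**}(x)=0\ \Leftrightarrow\ \iota_A^{**}(x)\le0\ \Leftrightarrow\ \langle x,y\rangle\le\iota_A^*(y)\ \forall y\in Y.
\end{equation*}
You instead unpack the equivalence: the easy direction comes from linearity plus $\sigma(X,Y)$-continuity, and the hard direction from a direct appeal to geometric Hahn--Banach strong separation of the point from the closed convex hull. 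The substance is the same (Fenchel--Moreau is itself a consequence of Hahn--Banach separation), so the two proofs are mathematically equivalent at the root. What the paper's version buys is brevity and the fact that it quotes a single black-box theorem from convex conjugacy; what yours buys is self-containment and the explicit separation picture, at the cost of having to split into cases and be a little careful about $A=\emptyset$ and strictness of the separating inequality (you handle both correctly). Either is perfectly acceptable.
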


\begin{proof}
The biconjugate $\iota_A^{**}$ of the indicator function $\iota_A$
is its closed convex envelope which is also the indicator function
$\iota_{\cl\cv A}$ of $\cl\cv A.$ Therefore,
\begin{eqnarray*}
  x\in\cl\cv(A)
  &\Leftrightarrow&  \iota_A^{**}(x)=0\\
  &\Leftrightarrow&  \iota_A^{**}(x)\le0 \\
  &\Leftrightarrow& \langle x,y\rangle\le
    \iota_A^*(y),\ \forall y\in Y
\end{eqnarray*}
where the second equivalence follows from $ \iota_A^{**}(x)=\sup_y
\langle x,y\rangle-\iota_A^*(y)\ge \langle
x,0\rangle-\iota_A^*(0)=0,$ for all $x\in X.$
\end{proof}

\begin{proposition}\label{res-62}
Any finite strongly $c$-cyclically monotone plan is optimal.
\end{proposition}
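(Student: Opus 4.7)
The plan is to exploit Lemma \ref{res-03} directly, since that lemma was designed precisely to make strong $c$-cyclical monotonicity yield optimality almost for free. Let $\pi \in \Pcmn$ be a finite, strongly $c$-cyclically monotone plan, and let $f,g$ be the $[-\infty,\infty)$-valued measurable witnesses from Definition \ref{def-04}, so that $f\oplus g \le c$ everywhere and $f\oplus g = c$ $\pi$-almost everywhere. First I would note that on the $\pi$-full set where $f\oplus g = c$, both sides are finite $\pi$-a.e.\ (since $\int c\,d\pi < \infty$), so $\int (f\oplus g)\,d\pi$ is well-defined and equals $\int c\,d\pi$.

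Next, pick an arbitrary competitor $\tilde\pi \in \Pmn$. If $\int c\,d\tilde\pi = \infty$, then $\int c\,d\pi \le \int c\,d\tilde\pi$ trivially. Otherwise $\tilde\pi \in \Pcmn$, and Lemma \ref{res-03} applies to both $\pi$ and $\tilde\pi$ to give the crucial identity
\begin{equation*}
\int (f\oplus g)\,d\pi \;=\; \int (f\oplus g)\,d\tilde\pi \;\in\; [-\infty,\infty).
\end{equation*}
Combining with $f\oplus g \le c$ everywhere and $f\oplus g = c$ $\pi$-a.e.\ yields
\begin{equation*}
\int c\,d\pi \;=\; \int (f\oplus g)\,d\pi \;=\; \int (f\oplus g)\,d\tilde\pi \;\le\; \int c\,d\tilde\pi,
\end{equation*}
which is exactly the optimality of $\pi$.

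The main obstacle has already been absorbed upstream: Lemma \ref{res-03} is the nontrivial ingredient that lets one compare $\int (f\oplus g)\,d\pi$ across different finite plans without any integrability of $f,g$ individually against $\mu$ and $\nu$. Once that lemma is in hand, the proof of Proposition \ref{res-62} is purely a two-line chain of inequalities, with the only care being the split into the cases $\int c\,d\tilde\pi = \infty$ versus $\tilde\pi\in\Pcmn$ so that Lemma \ref{res-03} is legally invoked.
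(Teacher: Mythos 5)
Your proof is correct, but it takes a genuinely different route from the one the paper uses. You argue directly from Lemma~\ref{res-03}: since $f\oplus g = c$ $\pi$-a.e.\ and $\pi\in\Pcmn$, one gets $\int(f\oplus g)\,d\pi = \int c\,d\pi$; then, for any competitor $\tilde\pi\in\Pcmn$, Lemma~\ref{res-03} transfers this value to $\int(f\oplus g)\,d\tilde\pi$, and $f\oplus g\le c$ everywhere closes the chain. This is essentially the argument of Schachermayer--Teichmann/Beiglb\"ock--Schachermayer, with the case $\int c\,d\tilde\pi=\infty$ dispatched trivially; it is short, elementary, and self-contained once Lemma~\ref{res-03} is accepted. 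The paper, by contrast, deliberately derives Proposition~\ref{res-62} as a corollary of the abstract saddle-point characterization, Theorem~\ref{res-20}-(1): it uses Lemma~\ref{res-03} only to define a linear form $\omega_o$ on the one-dimensional subspace spanned by $(\mu,\nu)$, checks $\langle\omega_o,\cdot\rangle\le\Las$ there, invokes an extended Hahn--Banach theorem to produce $\omega\in\XX^*$ dominated by $\Las$ on all of $\XX$, shows $\omega_A\oplus\omega_B\in\overline\Gamma$ by continuity of $T^*$, and then reads off optimality from \eqref{eq-93}. The paper's route is considerably heavier but is chosen on purpose: its point is to show that the conjugate-duality/saddle-point machinery of \cite{Leo07a} recovers and systematizes this known sufficient condition, and the Hahn--Banach extension step is exactly what gets generalized into the abstract necessary condition later. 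Your proof buys brevity; the paper's buys a uniform method that also yields Theorems~\ref{res-20} and~\ref{res-73}.
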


This result is a restatement of \cite[Theorem 2]{ST09}, see
Theorem \ref{res-75}-b.

\begin{proof}
This is a corollary of the first (easy) part of Theorem
\ref{res-20}.
\\
Let $\pi$ be a finite strongly $c$-cyclically monotone plan:
There exist two measurable $[-\infty,\infty)$-valued functions $f$
and $g$ such that $\fog$ satisfies \eqref{eq-75}:
\begin{equation*}
    \left\{\begin{array}{ll}
       f\oplus g\leq c &\  \textrm{everywhere}\\
        f\oplus g= c &\ \pi\textrm{-almost everywhere}.\\
    \end{array}\right.
\end{equation*}
Let us first check that $(\mu,\nu)$ belongs to $\XX.$ Indeed,
$|(\mu,\nu)|^*_\La=\inf\{\|\ell\|^*_c;
\ell\in\Uc',\ell_A=\mu,\ell_B=\nu\}\le\|\pi\|^*_c\le\int
c\,d\pi<\infty.$ Let $E_o$ be the vector subspace of $\XX$ spanned
by $(\mu,\nu)$ and by means of Lemma \ref{res-03}, define the
linear form on $E_o$

\begin{equation*}
    \langle\omega_o,t(\mu,\nu)\rangle:=t\int \fog\,d\mnc,
    \quad t\in\R.
\end{equation*}
As $  f\oplus g= c,$ $\pi$-almost everywhere and $\pi\in \Pcmn$,
we have
\begin{equation}\label{eq-94}
     \langle\omega_o,(\mu,\nu)\rangle=\int c\,d\pi.
\end{equation}
On the other hand, thanks to (\ref{xped}) and Theorem
\ref{res-MK}-(1), $\Las(\mu,\nu) = \sup(\overline{K}) \ge
\int\fog\,d\mnc =\langle\omega_o,(\mu,\nu)\rangle.$  It follows
that
$$
\langle\omega_o,k\rangle\leq\Las(k),\quad \forall k\in E_o.
$$
Denoting
 $$
    \Upsilon:=\{( f', g')\in (C_A\oplus C_B)_{|\s};
{ f'\oplus g'}\le c\},
 $$
one has $\Las(k)=\iota_\Upsilon^*(k),$ $k\in\XX.$ In particular,
it is a $[0,\infty]$-valued positively homogeneous convex function
on $\XX.$ By the analytic form of the Hahn-Banach theorem (see
Remark \ref{rem-61} below), there exists an extension $\omega$ of
$\omega_o$ to $\XX$ which satisfies
\begin{equation*}
    \langle\omega,k\rangle\leq\iota_\Upsilon^*(k),\ \forall k\in \XX.
\end{equation*}
By Lemma \ref{res-63}, this means that $\omega$ belongs to the
$\sigma(\Xs,\XX)$-closure $\overline{\Upsilon}$ of $\Upsilon.$ It
is clear that $T_o^*\Upsilon\subset\Gamma$ and one sees that
$T^\ast\overline{\Upsilon}\subset\overline{\Gamma}$ because of the
$\sigma(\XX^*,\XX)$-$\sigma(\LL^*,\LL)$-continuity of $T^\ast:
\XX^*\to\LL^*,$ see \cite[Lemma 4.13]{Leo07a}.  Since $
T^\ast\omega= \omega_A\oplus\omega_B,$ we have
\begin{equation*}
   \omega_A\oplus\omega_B\in \overline{\Gamma}.
\end{equation*}
Together with (\ref{eq-94}), this allows us to apply Theorem
\ref{res-20}-(1) to obtain the desired result.
\end{proof}

\begin{remark}\label{rem-61}
We have used an unusual form of the Hahn-Banach theorem
    where the positively homogeneous sublinear function $\Las(k)$ is
    $(-\infty,+\infty]$-valued instead of real-valued. For a proof of this variant,
    rewrite without any change the proof the analytic version of
    Hahn-Banach theorem based on Zorn's lemma; see for instance
    \cite[Thm. 1.1]{Brezis}.
\\
The reason for not considering this extended version in the
literature might be the following: To obtain separation by
\emph{closed} hyperplane, the positively homogeneous sublinear
function  of interest is the gauge of an \emph{open} convex
neighborhood of zero which is finite.
\end{remark}

\subsection{Measurability considerations. Strong $c$-cyclical
monotonicity revisited}

\newcommand{\Lum}{L_1(\mu)}
\newcommand{\Lun}{L_1(\nu)}
\newcommand{\mon}{\mu\!\otimes\!\nu}

\begin{definitions}
Let $\gamma$ be a probability measure and $\Gamma$ be a set of
probability measures on some measurable space.
\begin{enumerate}[(1)]
      \item  A set $N$  is said to
be $\gamma$-negligible if it is measurable and $\gamma(N)=0.$
     \item A set is said to be $\Gamma$-negligible if it is
$\gamma$-negligible for all $\gamma\in \Gamma.$
    \item A property holds
$\Gamma$-almost everywhere if it holds everywhere except on a
$\Gamma$-negligible set.
     \item  A function $h$  is said to
be $\gamma$-measurable if there exists a $\gamma$-negligible set
$N$ such that $\mathbf{1}_{N^c}h$ is measurable.
    \item  A function $h$  is said to
be $\Gamma$-measurable if there exists a $\Gamma$-negligible set
$N$ such that $\mathbf{1}_{N^c}h$ is measurable.
\end{enumerate}
\end{definitions}

In particular the spaces  $\Lum$ and $\Lun$ of all $\mu$ and
$\nu$-integrable functions on $A$ and $B$ consist of classes with
respect to the $\mu$ and $\nu$-almost everywhere equalities of
$\mu$ and $\nu$-measurable functions.

\begin{definitions}[$\mn$-measurability] These notions are meaningful
only for a measurable product space $\AB.$
\begin{enumerate}[(1)]
    \item A  subset $N$ of $\AB$ is said to be
\emph{$\mn$-negligible} if there exist two measurable sets
$N_A\subset A$ and $N_B\subset B$ such that $\mu(N_A)=\nu(N_B)=0$
and $N\subset (N_A\times B)\cup(A\times N_B).$
    \item A property holds
$\mn$-almost everywhere if it holds everywhere except on a
$\mn$-negligible set.
       \item A function $\varphi$ on $\AB$ is said to be
$\mn$-measurable if there exists a $\mn$-negligible measurable set
$N$ such that $\mathbf{1}_{N^c}\varphi$ is measurable on $\AB.$
\end{enumerate}
\end{definitions}

\begin{proposition}\label{res-04}\
\begin{enumerate}[(1)]
    \item Any $\mn$-negligible set is $\Pmn$-negligible.\\ Hence,
    any $\mn$-measurable function is $\Pmn$-measurable.
    \item
Let $f$ and $g$ be functions on $A$ and $B.$ The following
statements are equivalent.
\begin{enumerate}[(a)]
    \item $f$ is $\mu$-measurable  and $g$ is $\nu$-measurable;
    \item $\fog$ is $\mn$-measurable;
    \item $\fog$ is $\Pmn$-measurable;
    \item $\fog$ is $\mon$-measurable.
\end{enumerate}
\end{enumerate}
\end{proposition}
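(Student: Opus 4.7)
\textbf{Proof plan for Proposition \ref{res-04}.}

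For part (1), a $\mn$-negligible set $N$ is by definition contained in some $(N_A \times B) \cup (A \times N_B)$ with measurable $N_A \subset A$ and $N_B \subset B$ satisfying $\mu(N_A) = \nu(N_B) = 0.$ Any $\pi \in \Pmn$ has marginals $\mu$ and $\nu,$ so $\pi(N_A \times B) = \mu(N_A) = 0$ and $\pi(A \times N_B) = \nu(N_B) = 0,$ whence $\pi(N) = 0.$ The consequence for functions follows at once by taking complements of the exceptional sets.

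For part (2), my plan is to establish the cyclic implications (a) $\Rightarrow$ (b) $\Rightarrow$ (c) $\Rightarrow$ (d) $\Rightarrow$ (a). The first three are routine. For (a) $\Rightarrow$ (b), pick a $\mu$-null measurable $N_A$ with $\1_{N_A^c} f$ measurable and a $\nu$-null measurable $N_B$ with $\1_{N_B^c} g$ measurable, and set $N := (N_A \times B) \cup (A \times N_B);$ then $N$ is $\mn$-negligible and its complement is the rectangle $N_A^c \times N_B^c,$ on which $\fog$ coincides with $(\1_{N_A^c} f) \oplus (\1_{N_B^c} g)$ and is therefore Borel measurable on $\AB.$ Implication (b) $\Rightarrow$ (c) is a direct application of part (1) to the $\mn$-negligible exceptional set, and (c) $\Rightarrow$ (d) follows from the fact that the product measure $\mon$ itself belongs to $\Pmn.$

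The substantive step is (d) $\Rightarrow$ (a). Given a $\mon$-null measurable set $N \subset \AB$ for which $\1_{N^c}(\fog)$ is Borel measurable, Fubini's theorem applied to $\1_N$ produces a set $G_B \subset B$ of full $\nu$-measure on which the section $N^b := \{a : (a,b) \in N\}$ is $\mu$-null. Selecting $b_0 \in G_B$ with $g(b_0)$ finite, the $b_0$-section of the product-measurable function $\1_{N^c}(\fog)$ is Borel in $a$ and agrees with $f(\cdot) + g(b_0)$ on the $\mu$-conull set $(N^{b_0})^c;$ subtracting the finite constant $g(b_0)$ yields the $\mu$-measurability of $f,$ and a symmetric argument yields the $\nu$-measurability of $g.$ The main obstacle is thus this last implication, and its only genuine subtlety is the degenerate extended-real case in which $g = -\infty$ $\nu$-a.e., so no admissible $b_0$ is available: there $g$ is trivially $\nu$-measurable, $\fog = -\infty$ $\mon$-almost everywhere, and recovering $\mu$-measurability of $f$ requires a short separate discussion (or else one simply restricts the statement to functions with values in $\R$).
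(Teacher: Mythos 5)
Your proof follows the paper's own argument step for step: part (1) is the same marginal computation, and part (2) establishes the same cyclic chain $(a)\Rightarrow(b)\Rightarrow(c)\Rightarrow(d)\Rightarrow(a)$, with Fubini's theorem applied to the full-measure set and a fixed section used to recover a one-variable function (you section at a fixed $b_0$ to recover $f$, the paper sections at a fixed $a_o$ to recover $g$ --- the same device up to symmetry). The extra care you take in requiring the chosen value $g(b_0)$ to be finite, together with your closing remark about the degenerate case, is an apt observation: the paper's proof passes silently from measurability of $b\mapsto\mathbf{1}_{S_{a_o}}(b)\bigl(f(a_o)+g(b)\bigr)$ to measurability of $\mathbf{1}_{S_{a_o}}g$ without ensuring $f(a_o)\in\R$, and as you note, $(d)\Rightarrow(a)$ does fail for extended-real-valued $f,g$ when one of them equals $-\infty$ on a set of full measure, so that a convention (or the restriction to $\R$-valued $f,g$) is indeed needed for the implication to hold as stated.
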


\begin{proof}

\boulette{(1)} Let $\pi\in \Pmn$ and $N=(N_A\ttimes B) \cup
(A\ttimes N_B)$ be a $\mn$-negligible measurable set. We have
$\pi(N)\le \pi(N_A\ttimes B)+\pi(A\ttimes
N_B)=\mu(N_A)+\nu(N_B)=0.$

Let us prove (2). \\ \boulette{$(a)\Rightarrow (b)$} Let $N_A$ and
$N_B$ be negligible sets such that $f\mathbf{1}_{N_A^c}$ and
$g\mathbf{1}_{N_B^c}$ are measurable. Of course, $N=(N_A\ttimes B)
\cup (A\ttimes N_B)$ is $\mn$-negligible and
$\mathbf{1}_{N^c}\fog$ is measurable.

\Boulette{$(b)\Rightarrow (c)$} This follows from (1).

\Boulette{$(c)\Rightarrow (d)$} Immediate.

\Boulette{$(d)\Rightarrow (a)$} Let $S$ be a measurable subset of
$\AB$ such that $\mon(S)=1$ and $\mathbf{1}_S\fog$ is measurable.
For all $a\in A,$ denote $S_a=\{b\in B; (a,b)\in S\}$ the
$a$-section of $S.$ It is measurable and by Fubini's theorem
\begin{equation*}
    1=\mon(S)=\int_A\left[\int_B\mathbf{1}_S(a,b)\,\nu(db)\right]\,\mu(da)
    =\int_A\nu(S_a)\,\mu(da).
\end{equation*}
Therefore, $\nu(S_a)=1,$ $\mu$-a.e.\! and there exists some
$a_o\in A$ such that $\nu(S_{a_o})=1.$ As a section of a
measurable function, the function $b\mapsto
\mathbf{1}_{S_{a_o}}(b)(f(a_o)+g(b))$ is measurable. It follows
that $\mathbf{1}_{S_{a_o}}g$ is measurable: this proves that $g$
is $\nu$-measurable. A similar proof works for $f$.
\end{proof}

It is immediate from the Definition \ref{def-04} that a transport
plan $\pi\in \Pmn$ is  strongly $c$-cyclically monotone if and
only if there exist a $\mu$-measurable function $ f$ on $A$ and a
$\nu$-measurable function
    $ g$ on  $B$  such that
    \begin{equation}\label{eq-07}
    \left\{\begin{array}{l}
       f\oplus g\leq c\quad \mn\textrm{-almost everywhere} \\
        f\oplus g= c\quad \pi\textrm{-almost everywhere.} \\
    \end{array}\right.
    \end{equation}
The underlying measurability properties of $\fog$ which are
required by \eqref{eq-07} are insured by Proposition \ref{res-04}.

By Proposition \ref{res-04}-1, if a property holds true
$\mn$-almost everywhere, then it is still true $\Pmn$-almost
everywhere and a fortiori $\Pcmn$-almost everywhere. Therefore,
\begin{equation}\label{eq-08}
    \left\{\begin{array}{l}
       f\oplus g\leq c\quad \Pcmn\textrm{-almost everywhere} \\
        f\oplus g= c\quad \pi\textrm{-almost everywhere.} \\
    \end{array}\right.
    \end{equation}
is weaker than the strong $c$-cyclical monotonicity. Without
changing a word to the proof of Proposition \ref{res-62}, one
obtains the following sufficient \textit{condition} of optimality.

\noindent\textbf{Theorem \ref{res-05}.} \textit{Let $\pi\in\Pcmn$
be any finite plan. If there exist a $\mu$-measurable function $
f$ on $A$ and a $\nu$-measurable function $ g$ on  $B$  which
satisfy \eqref{eq-08}, then $\pi$ is optimal.}

\subsection{The Counterexamples \ref{res-06}-(d,e)}\label{sec-ctex}

They are optimal plans which are not  strongly $c$-cyclically
monotone but they both satisfy the weaker property \eqref{eq-08}.

\begin{enumerate}[-]
    \item Counterexample \ref{res-06}-(d). Let $A=B=[0,1]$ both
    equipped with the Lebesgue measure $\lambda=\mu=\nu.$ Define
    $c$ to be $\infty$ above the diagonal and $1-\sqrt{a-b}$ for
    $b\le a.$ The set $\Pcmn$ is reduced to the uniform
    probability measure $\pi$ on the diagonal which is a fortiori optimal. It is shown in
    \cite{BGMS08} that $\pi$ is not strongly $c$-cyclically
    monotone. But \eqref{eq-08} is trivially satisfied.

    \item Counterexample \ref{res-06}-(e). Let
    $A=B=\mathbb{N}\cup\{\omega\}$ where $\omega$ is a ``number"
    larger than all $n\in\mathbb{N}.$ Equip $A$ and $B$ with the
    discrete topology and define $\mu=\nu$ with a full support.
    Define $c(a,b)=\left\{\begin{array}{ll}
      \infty & \textrm{for }a<b \\
      1 & \textrm{for }a=b \\
      0 & \textrm{for }a>b \\
    \end{array}\right.$ for each $a\in A$ and $b\in B.$ Again, the set $\Pcmn$ is reduced to a
    single probability measure $\pi$ which is a fortiori optimal
    and \eqref{eq-08} is trivially satisfied. Nevertheless, it is
    proved in \cite{BS09} that $\pi$ is not strongly $c$-cyclically
    monotone.
\end{enumerate}

\section{A necessary condition of optimality}\label{sec-01}

The aim of this section is to prove Theorem \ref{res-73} which
states that an optimal plan satisfies approximately \eqref{eq-08}.

\subsection{A first approach to the necessary condition}
We sketch a direct approach to the necessary condition and
emphasize some problems which remain to be solved. The Kantorovich
dual equality (\ref{eq-kanto}) yields a maximizing sequence
$\{(f_n,g_n)\}_{n\ge1}$ in $C_A\!\times\!C_B.$ Assume that $\inf
\eqref{MK}<\infty$ and define $c_n:=f_n\oplus g_n$ so that $c_n\le
c$ and $\Lim n\int c_n\,d\pi^*=\inf \eqref{MK}=\int c\,d\pi^*$ for
any optimal plan $\pi^*.$ Clearly, $c-c_n\ge0$ and $\Lim n \int
(c-c_n)\,d\pi^*=0.$ Hence, $c_n$ converges to $c$ in $L^1(\pi^*)$
and one can extract a subsequence, denoted $\seq ck,$ which
converges  to $c$ pointwise $\pi^*$-almost everywhere. By a result
of Borwein and Lewis \cite[Corollary 3.4]{BL92}, $\seq ck$
converges pointwise $\pi^*$-almost everywhere to some sum function
$f\oplus g.$ Therefore,
$
    c=f\oplus g,\ \pi^*\textrm{-almost everywhere.}
$

The remaining problem of extending $f$ and $g$ such that $f\oplus
g\le c$ everywhere is not obvious. By Tykhonov's theorem, one can
extract a \emph{subnet} from $\seq ck$ which converges pointwise
to a $[-\infty,+\infty]$-valued function $\tilde{c}$ such that
$\tilde{c}\le c$ everywhere and $\tilde{c}=c,$ $\pi^*$-almost
everywhere.  Unfortunately, a subnet limit is not enough to insure
that $\tilde{c}$ is measurable. In addition, one cannot apply the
above cited Borwein-Lewis convergence result since
$[-\infty,+\infty]$ is not a group. Consequently, one cannot
assert that $\tilde{c}$ is of the sum form $f\oplus g.$

\subsection{A necessary condition}

The idea in this section is to approximate $\Phi=\iota_{\Gamma}$
by the sequence $\seq{\Phi}k$ with $\Phi_k=\iota_{\Gamma_k}$ the
convex indicator function of
\begin{equation*}
    \Gamma_k:=\{u\in\Uo; -kc\le u\le c\},\quad k\ge1.
\end{equation*}
We define
\begin{equation}\label{eq-01}
\Fs_k(\ell)=\Ncs{\ell^+}+k\Ncs{\ell^-},\quad\ell\in\Uc'
\end{equation}
its convex conjugate, where $\Ncs{\ell}=\sup\{\langle
u,\ell\rangle; u\in\Uc: \Nc u\le1\}$ is the dual norm on $\Uc'.$
Note that as $\Uc'$ is a Riesz space, the positive and negative
part $\ell^+$ and $\ell^-$ are meaningful for all $\ell\in\Uc'.$
\\
The restriction of $\Fs_k$ to the set $\Qcmn$ of all $q\in\Uc'$
with first and second marginals $\mu$ and $\nu$ is denoted
\begin{equation*}
F_k(q)=\Ncs{q^+}+k\Ncs{q^-},\quad q\in \Qcmn:=\{q\in \Uc';
q_A=\mu,q_B=\nu\}
\end{equation*}
 It is clear that
$\seq Fk$ converges pointwise and increasingly to the restriction
$F$ of $\Fs$ to $\Qcmn.$ By Proposition \ref{res-02}-c, the
effective domain of $F$ is  $\Pcmn.$ Hence, $F$ is the objective
function of \eqref{MK}:
\begin{equation*}
    F(q)=\left\{%
\begin{array}{ll}
     \IAB c\,dq, & \hbox{if }q\in\Pcmn \\
    +\infty, & \hbox{otherwise} \\
\end{array}%
\right.,\quad q\in\Qcmn
\end{equation*}

\begin{lemma}\label{res-69}\
\begin{enumerate}[(a)]
    \item The following $\Gamma$-convergence result  $$\Glim kF_k=F$$ holds true
    for both the $\ast$-weak topology $\sigma(\Qcmn,\Uc)$ and the strong
    topology  associated with $\Nc{\cdot}^*$ on $\Qcmn;$
    \item $\min F=\Lim k \inf F_k=\sup_k \min F_k.$
\end{enumerate}

\end{lemma}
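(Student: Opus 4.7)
My strategy is to recognize $(F_k)$ as an increasing sequence of convex lower semicontinuous functions and to invoke the general principle that the $\Gamma$-limit of such a sequence coincides with its pointwise supremum.

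First I would verify three structural properties. Since $\Gamma_k=\{u\in\Uo;-kc\le u\le c\}$ is non-decreasing in $k$, the convex indicators $\Phi_k$ are non-increasing, hence by convex duality the conjugates $\Fs_k$ are non-decreasing, and restriction gives $F_k\le F_{k+1}$. Each $\Fs_k$ is $\sigma(\Uc',\Uc)$-lower semicontinuous as a convex conjugate, and $F_k=\Fs_k|_{\Qcmn}$ inherits this property because $\Qcmn$ is weak-$*$ closed in $\Uc'$. The crux is the pointwise identity $\sup_k F_k=F$ on $\Qcmn$: for $q\ge 0$, Proposition \ref{res-02}-(c) gives $q\in\PS$, so $q^-=0$ and $F_k(q)=\Ncs{q}$ for every $k$, and approximating $c$ from below by bounded continuous functions $c_n\nearrow c$ (as in the proof of Proposition \ref{res-02}-(d)) identifies $\Ncs{q}$ with $\int_\s c\,dq=F(q)$; for $q$ with nontrivial negative part, $\Ncs{q^-}>0$ forces $F_k(q)\to\infty=F(q)$ by Proposition \ref{res-02}-(a).

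With these facts in hand I would invoke the neighborhood formulation of $\Gamma$-convergence, valid in any topological space:
\begin{equation*}
\Gliminf k F_k(q)=\sup_U\liminf_k\inf_U F_k,\qquad \Glimsup k F_k(q)=\sup_U\limsup_k\inf_U F_k,
\end{equation*}
with $U$ ranging over the open neighborhoods of $q$. Monotonicity of $k\mapsto\inf_U F_k$ collapses both $\liminf$ and $\limsup$ to $\sup_k\inf_U F_k$; exchanging the two suprema and using lower semicontinuity of each $F_k$ gives the common value $\sup_k F_k(q)=F(q)$. Nothing in this argument depends on the topology, so it handles simultaneously $\sigma(\Qcmn,\Uc)$ and the strong topology associated with $\Ncs{\cdot}$ on $\Qcmn$.

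For (b) I would combine (a) with weak-$*$ equi-coercivity. Each sublevel set $\{F_k\le M\}\cap\Qcmn$ lies in $\{q\in\Uc';\Ncs{q^+}+\Ncs{q^-}\le 2M\}$, which is weak-$*$ compact by Banach--Alaoglu and the weak-$*$ closedness of $\Qcmn$. Weierstrass then provides minimizers $q_k$ of $F_k$; they lie in a common weak-$*$ compact set and hence admit a weak-$*$ cluster point $q^*$. The $\Gliminf$ inequality of (a) yields $F(q^*)\le\sup_k\min F_k$, while $F_k\le F$ gives the trivial bound $\sup_k\min F_k\le\min F\le F(q^*)$; the chain collapses, so $q^*$ realizes $\min F$ and both equalities in (b) follow. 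The main obstacle is the identification $\sup_k F_k=F$ on the full affine set $\Qcmn$, which demands both the norm computation $\Ncs{q}=\int c\,dq$ for $q\in\PS$ and Proposition \ref{res-02} to dispose of signed elements; a secondary technical point is that $\sigma(\Qcmn,\Uc)$ need not be metrizable, forcing use of the neighborhood rather than sequential formulation of $\Gamma$-convergence.
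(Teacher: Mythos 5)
Your proof takes the same route as the paper: identify $(F_k)$ as an increasing, lower semicontinuous, equicoercive sequence and deduce both parts from the standard fact that the $\Gamma$-limit of such a sequence equals its pointwise supremum. The paper simply cites Dal Maso (Prop.\ 5.4, Rem.\ 5.5, Thm.\ 7.8) and declares the pointwise convergence $F_k\nearrow F$ ``clear,'' whereas you re-derive those ingredients explicitly via the neighborhood formulation and unpack the identification $\sup_k F_k = F$ using Proposition \ref{res-02}; this is a correct elaboration of the same argument, not a different approach.
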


\begin{proof}
Since $\Qcmn$ is $\sigma(\Uc',\Uc)$-closed, one sees with the
Banach-Alaoglu theorem that $F_k$ is
$\sigma(\Qcmn,\Uc)$-inf-compact for all $k\ge1.$ It is a fortiori
\lsc\ with respect to $\sigma(\Qcmn,\Uc)$ and $\Nc{\cdot}^*.$ As
the sequence $\seq Fk$ is increasing, it is also
$\sigma(\Qcmn,\Uc)$-equicoercive.
\\
Now, (a) and (b) follow respectively from \cite[Prop.\! 5.4 \&
Rem.\! 5.5]{DalMaso} and \cite[Thm.\! 7.8]{DalMaso}.
\end{proof}

Let us denote $M_\epsilon(F_k)$ the set of the
$\epsilon$-minimizers of $F_k:$
$$
M_\epsilon(F_k)=\{q\in \Qcmn; F_k(q)\le\inf F_k+\epsilon\}, \quad
\epsilon>0, k\ge1
$$
and $M(F)$ the set of all the minimizers of $F.$

\begin{lemma}\label{res-70}
Clearly, the set of all the optimal plans is $M(F).$
\\
Assume that $\Pcmn$ is nonempty. Then $M(F)$ is nonempty and
\begin{equation*}
    M(F)=\bigcap_{\epsilon>0, k\ge1}\overline{M_\epsilon}(F_k)
\end{equation*}
where $\overline{M_\epsilon}(F_k)$ is the closure of
$M_\epsilon(F_k)$ with respect to $\Nc{\cdot}^*.$
\end{lemma}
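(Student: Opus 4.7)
The plan rests on the $\Gamma$-convergence statement of Lemma~\ref{res-69}. Nonemptiness of $M(F)$ is immediate from Theorem~\ref{res-MK}(2-a): since $\Pcmn$ is assumed nonempty, an optimal plan exists, which is by definition an element of $M(F)$.

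For the inclusion $\supseteq$, I would argue by a diagonal extraction. Given any $q\in\bigcap_{\epsilon>0,\,k\ge 1}\overline{M_\epsilon}(F_k)$, for each integer $n\ge 1$ pick $q_n\in M_{1/n}(F_n)$ with $\Nc{q-q_n}^*<1/n$; then $q_n\to q$ in the strong $\Nc{\cdot}^*$-topology. Combining the $\Gamma$-liminf inequality from Lemma~\ref{res-69}(a) with the convergence $\inf F_n\to\min F$ from Lemma~\ref{res-69}(b),
\[
F(q)\le \Liminf n F_n(q_n)\le \Liminf n\bigl(\inf F_n+1/n\bigr)=\min F,
\]
so $q\in M(F)$.

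For the inclusion $\subseteq$, fix $q^*\in M(F)\subseteq\Pcmn$. Since $q^*\ge 0$ in $\Uc'$ we have $(q^*)^+=q^*$ and $(q^*)^-=0$, so by monotone approximation of $c$ from below by elements of $\Cs$ we obtain $F_k(q^*)=\Ncs{q^*}=\int c\,dq^*=F(q^*)=\min F$. The plan is then to establish the auxiliary identity $\inf F_k=\min F$ for every $k\ge 1$; once this is in hand, $F_k(q^*)=\inf F_k$, and therefore $q^*\in M(F_k)\subseteq M_\epsilon(F_k)\subseteq\overline{M_\epsilon}(F_k)$ for every $\epsilon>0$ and $k\ge 1$ (recall also that $M_\epsilon(F_k)$ is already closed as a sublevel set of the $\Nc{\cdot}^*$-lsc functional $F_k$).

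The hard part is the identity $\inf F_k=\min F$ for each $k\ge 1$. By the dual equality~\eqref{xped} applied to $\Fo_k=\iota_{\Gamma_k}$,
\[
\inf F_k=\sup\Bigl\{\textstyle\int f\,d\mu+\int g\,d\nu:(f,g)\in\Yo,\ -kc\le f\oplus g\le c\text{ on }\s\Bigr\},
\]
whereas $\min F=\sup\eqref{K}$ is the same supremum without the lower constraint. The strategy is to start from a bounded continuous near-optimizer $(f,g)$ of~\eqref{K}, which exists by Theorem~\ref{res-MK}(1), replace it by its $c$-conjugate pair to obtain a priori bounds, and exploit $c\ge 1$ together with $k\ge 1$ to verify that the lower constraint $f\oplus g\ge -kc$ is automatic. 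The delicate point to watch is that an additive shift $(f,g)\mapsto(f+\alpha,g-\alpha)$ leaves the pointwise sum $f\oplus g$ unchanged, so the lower constraint cannot be enforced by translation alone; the $c$-conjugation is genuinely needed, and one must preserve the integrability of $(f,g)$ against $(\mu,\nu)$ throughout.
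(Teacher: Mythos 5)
Your $\supseteq$ inclusion is correct, and it is in fact a hands-on reproduction of one half of what the paper gets for free from \cite[Thm.\ 7.19]{DalMaso}: the diagonal extraction $q_n\in M_{1/n}(F_n)$, $\Nc{q-q_n}^*<1/n$, combined with the $\Gamma$-liminf inequality and $\inf F_n\to\min F$ from Lemma~\ref{res-69}, is exactly the argument hidden inside that abstract theorem. The nonemptiness claim is also fine.

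The $\subseteq$ inclusion is where the attempt breaks down. You correctly compute $F_k(q^*)=\Ncs{q^*}=\int c\,dq^*=\min F$ for an optimal plan $q^*$, and you correctly identify that the conclusion $q^*\in M_\epsilon(F_k)$ then \emph{requires} the identity $\inf F_k=\min F$ for every $k\ge1$ (since a priori $\inf F_k\le\min F$, with the sequence $\inf F_k$ merely increasing to $\min F$). But you do not prove this identity, and the sketch you offer does not obviously close the gap. In dual form the identity says that the two-sided dual problem $\sup\{\int f\,d\mu+\int g\,d\nu:\ -kc\le f\oplus g\le c\text{ on }\s\}$ has the same value as the one-sided Kantorovich problem, and the proposal is to produce near-optimizers satisfying the extra lower bound by passing to $c$-conjugates. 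However, $c$-conjugation preserves the upper constraint $f\oplus g\le c$ but gives no control of the form $f\oplus g\ge -kc$: writing $g'=f^c$ and $f'=f^{cc}$, one only gets bounds like $g'(b)\ge 1-\sup f$ and $f'(a)\ge 1-\sup g'$, i.e.\ lower bounds by constants depending on the (a priori unbounded) sup-norms of the near-optimizers, not lower bounds comparable to $-kc$. Truncation $\max(f\oplus g,-k)$ also fails because it destroys the product form $f'\oplus g'$. So as it stands, the ``hard part'' you single out is a genuine hole, and your own final caveat is well placed. The paper takes a different route: it invokes \cite[Thm.\ 7.19]{DalMaso} to get $M(F)=\bigcap_\epsilon K\text{-}\Liminf{k} M_\epsilon(F_k)$ directly, and then uses the monotonicity of $(F_k)_k$ to identify the Kuratowski liminf with the intersection of closures, thus never writing $\inf F_k=\min F$ explicitly. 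Your approach is more elementary in spirit and your $\supseteq$ half is a nice unpackaging of the abstract tool, but for the $\subseteq$ half you should either supply a complete proof of $\inf F_k=\min F$ or fall back on the $\Gamma$-convergence machinery as the paper does.
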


\begin{proof}
Lemma \ref{res-69} and \cite[Thm.\! 7.19]{DalMaso} give us
$M(F)=\bigcap_\epsilon K\textrm{-}\Liminf k M_\epsilon(F_k)$ where
$K\textrm{-}\Liminf k M_\epsilon(F_k)$ is the liminf in the sense
of Kuratowski associated with the topology generated by
$\Nc{\cdot}^*.$ Since $\seq Fk$ is increasing, we obtain
$K\textrm{-}\Liminf k
M_\epsilon(F_k)=\bigcap_{k\ge1}\overline{M_\epsilon}(F_k)$ which
completes the proof.
\end{proof}

This result invites us to learn more about $M_\epsilon(F_k).$ We
denote $\Uc''$ the strong bidual of $\Uc$ and denote $\Nc{\cdot}$
its norm. Let $\overline{\Sigma}_k$ be the
$\sigma(\Uc'',\Uc')$-closure of
$$
    \Sigma_k=\{f\oplus g\in(C_A\oplus C_B)_{|\s}; -kc\le f\oplus g\le c\}
$$
and $\Fb_k$ the largest convex $\sigma(\Uc'',\Uc')$-\lsc\
extension of $\Phi_k$ to $\Uc''.$ We denote $\Las_k$ the analogue
of $\Las$ where $\Phi$ is replaced by $\Phi_k.$

\begin{lemma}\label{res-71}
For each $\epsilon>0,$ $k\ge1$ and $q\in M_\epsilon(F_k),$ there
exist $\etab\in\overline{\Sigma}_k,$ $\etat\in\Uc''$ and
$\lt\in\Uc'$ such that $\Ncs{\lt-q}\le\sqrt{\epsilon},$
$\Nc{\etat-\etab}\le\sqrt{\epsilon}$ and
$\lt\in\partial\Fb_k(\etat).$
\end{lemma}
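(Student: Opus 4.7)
The natural tool is the Br\o ndsted--Rockafellar perturbation theorem, applied to $\Fs_k:\Uc'\to\mathbb{R}$, which is convex and norm-continuous on the Banach space $(\Uc',\Ncs{\cdot})$ and whose Fenchel conjugate with respect to the duality $\langle\Uc',\Uc''\rangle$ is $\Fb_k$. I would first exhibit an $\epsilon$-subgradient of $\Fs_k$ at $q$ that already lies in $\overline{\Sigma}_k$, and then invoke Br\o ndsted--Rockafellar to upgrade it to an exact subgradient at a nearby point.

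For the first part, applying Theorem \ref{xP3}(a) with $\Phi_k$ in place of $\Phi$ yields the dual equality
\begin{equation*}
\inf F_k=\Las_k(\mu,\nu)=\sup\{\mu(f)+\nu(g):(f,g)\in\Yo,\ f\oplus g\in\Sigma_k\},
\end{equation*}
since $\Phi_k\circ T_o^*(f,g)=\iota_{\Gamma_k}(f\oplus g)=\iota_{\Sigma_k}(f\oplus g)$. The condition $f\oplus g\in\Sigma_k$ forces $\Nc{f\oplus g}\le k$, so $\Sigma_k$ is $\Nc{\cdot}$-bounded and by the Banach--Alaoglu theorem any maximizing sequence $(f_n\oplus g_n)$ for this supremum admits a $\sigma(\Uc'',\Uc')$-cluster point $\etab\in\overline{\Sigma}_k$. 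Since $q\in\Qcmn$, $\langle f_n\oplus g_n,q\rangle=\mu(f_n)+\nu(g_n)\to\inf F_k$, and passing to the cluster gives $\langle\etab,q\rangle=\inf F_k$. Combined with $\Fb_k(\etab)=0$ (because $\etab\in\overline{\Sigma}_k\subset\overline{\Gamma}_k$) and the hypothesis $\Fs_k(q)\le\inf F_k+\epsilon$, this yields
\begin{equation*}
\Fs_k(q)+\Fb_k(\etab)\le\langle\etab,q\rangle+\epsilon,
\end{equation*}
that is, $\etab\in\partial_\epsilon\Fs_k(q)$.

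The Br\o ndsted--Rockafellar theorem then furnishes, for every $\lambda>0$, elements $\lt\in\Uc'$ and $\etat\in\Uc''$ with $\etat\in\partial\Fs_k(\lt)$, $\Ncs{\lt-q}\le\epsilon/\lambda$ and $\Nc{\etat-\etab}\le\lambda$. Choosing $\lambda=\sqrt\epsilon$ produces both announced bounds, and the Fenchel--Young identity $\Fs_k(\lt)+\Fb_k(\etat)=\langle\etat,\lt\rangle$ encoded in $\etat\in\partial\Fs_k(\lt)$ is equivalent to $\lt\in\partial\Fb_k(\etat)$. The main technical obstacle is the first part, and specifically the \emph{exact} matching $\langle\etab,q\rangle=\inf F_k$; this is precisely what the truncation parameter $k$ enables, since it makes $\Sigma_k$ uniformly $\Nc{\cdot}$-bounded and hence $\overline{\Sigma}_k$ weak-$\ast$ compact in $\Uc''$. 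Once the $\epsilon$-subgradient $\etab$ is in hand, the perturbation step is a routine application of Br\o ndsted--Rockafellar.
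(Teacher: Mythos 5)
Your proof is correct and has the same two-step skeleton as the paper's: first produce $\etab\in\overline{\Sigma}_k\cap\Uc''$ forming an $\epsilon$-subdifferential pair with $q$ (you write this as $\etab\in\partial_\epsilon\Fs_k(q)$; the paper writes the equivalent $q\in\partial_\epsilon\Fb_k(\etab)$), then perturb to an exact pair via Br\o ndsted--Rockafellar, which is interchangeable with the cited \cite[Thm.\! 6.2]{Eke-Temam}. The genuine difference is in how $\etab$ is obtained. The paper invokes the abstract machinery (Theorem \ref{T3b} and Proposition \ref{res-01}-b'): it picks a minimizer $\qb$ of $F_k$ and a dual optimizer $\ob\in\partial\Las_k(\mu,\nu)$, sets $\etab=T^*\ob=\ob_A\oplus\ob_B$, and uses the sum structure of $\etab$ --- namely that $\langle\etab,\cdot\rangle$ is constant on $\Qcmn$, equation (\ref{eq-100}) --- to transfer the exact Fenchel identity at $\qb$ into an $\epsilon$-identity at the given $q$. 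You instead build $\etab$ directly as a $\sigma(\Uc'',\Uc')$-cluster point of a dual maximizing sequence, exploiting that $\Sigma_k$ is $\Nc\cdot$-bounded by $k$ so Banach--Alaoglu applies, and passing $\langle f_n\oplus g_n,q\rangle=\mu(f_n)+\nu(g_n)$ to the limit along a subnet. Your route is more self-contained (no appeal to Proposition \ref{res-01}, and you get $\etab\in\overline{\Sigma}_k$ for free since you build it as a cluster of $\Sigma_k$-elements); the paper's route makes explicit the useful structural fact that a single $\etab$ works simultaneously for every $q\in\Qcmn$. Both are valid and deliver the lemma.
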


\begin{proof}
Since the effective domain of $\Fs_k$ is the whole space $\Uc',$
the dual equality insures that the effective domain of $\Las_k$ is
the whole space $\XX.$ In particular,
\begin{equation*}
    \partial\Las_k(\mu,\nu)\not=\emptyset.
\end{equation*}
This non-emptiness is crucial. The approximation $F_k$ of $F$ was
introduced to circumvent the problem of knowing whether
$\partial\Las(\mu,\nu)$ is empty or not. One is allowed to apply
Theorem \ref{T3b} and Proposition \ref{res-01}.
\\
Let $\qb$ be a minimizer of $F_k.$ We have $\qb\in \Qcmn$ and
$\partial\Fs_k(\qb)$ is nonempty. More precisely, with $\ob\in
\partial\Las_k(\mu,\nu)$ and $\etab=\ob_A\oplus\ob_B$ we have $
\etab\in\partial\Fs_k(\qb)$ and
\begin{equation}\label{eq-100}
    \langle\etab,\qb\rangle=\langle\etab,q\rangle,\quad\forall
    q\in \Qcmn.
\end{equation}
Fenchel's equality is
$\Fs_k(\qb)=\langle\etab,\qb\rangle-\Fb_k(\etab).$ But,
$\Fb_k(\etab)$ is equal to 0 or $+\infty$ and
$\Fs_k(\qb)>-\infty.$ Hence, $\Fb_k(\etab)=0$ and
\begin{equation*}
    \left\{%
\begin{array}{l}
    \etab\in \overline{\Sigma}_k\\
    \Fs_k(\qb)=\langle\etab,\qb\rangle\\
\end{array}%
\right.
\end{equation*}

As the level sets of $\F_k$ are $\Nc\cdot$-bounded, by Proposition
\ref{res-01}-b' we have $\etab\in\Uc''.$ Now, we consider the
topological duality $\langle\Uc',\Uc''\rangle.$
\\
Let us take $q$ in $M_\epsilon(F_k).$ With (\ref{eq-100}) one sees
that $\langle\etab,\qb\rangle=\langle\etab,q\rangle.$ It follows
that
$\Fs_k(q)\le\Fs_k(\qb)+\epsilon=\langle\etab,q\rangle+\epsilon=\langle\etab,q\rangle-\Fb_k(\etab)+\epsilon.$
This means that $q$ stands in the $\epsilon$-subdifferential
$\partial_\epsilon\Fb_k(\etab)$ of $\Fb_k$ at $\etab.$ One
completes the proof applying Ekeland's principle \cite[Thm.\!
6.2]{Eke-Temam}.
\end{proof}

\begin{lemma}\label{res-72}
Let $\pi$ be an optimal plan. There exists a sequence $\seq{\lt}k$
in $\Uc'$ and two sequences $\seq{\etat}k$ and $\seq{\etab}k$ in
$\Uc''$ such that
\begin{enumerate}[(i)]
    \item $\Lim k\Ncs{\lt_k-\pi}=0$
    \item[and] for each $k\ge1:$
    \item $\etat_k\in\partial\Fs_k(\lt_k);$
    \item $\Nc{\etat_k-\etab_k}\le 1/k$ and $\etab_k\in \overline{\Sigma}_k;$
    \item $\Ncs{\lt_k^-}\le (\inf\eqref{MK}+2)/k.$
\end{enumerate}
\end{lemma}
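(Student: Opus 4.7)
The plan is to combine Lemma \ref{res-70} with Lemma \ref{res-71} under the quantitative schedule $\epsilon_k:=1/k^2$. Since $\pi\in M(F)$, Lemma \ref{res-70} provides $q_k\in M_{\epsilon_k}(F_k)$ with $\Ncs{q_k-\pi}\le 1/k$. Applying Lemma \ref{res-71} to $q_k$ with this $\epsilon_k$ then yields $\etab_k\in\overline{\Sigma}_k$, $\etat_k\in\Uc''$ and $\lt_k\in\Uc'$ such that $\Ncs{\lt_k-q_k}\le 1/k$, $\Nc{\etat_k-\etab_k}\le 1/k$ and $\lt_k\in\partial\Fb_k(\etat_k)$. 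Taking these as the desired sequences, (iii) is a direct restatement of the conclusion of Lemma \ref{res-71}, and (i) follows from the triangle inequality $\Ncs{\lt_k-\pi}\le\Ncs{\lt_k-q_k}+\Ncs{q_k-\pi}\le 2/k$.

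For (ii) I would invoke Fenchel conjugacy: since $\Fb_k$ is by construction the largest $\sigma(\Uc'',\Uc')$-lsc convex extension of $\Phi_k$, and $\Fs_k=\Phi_k^*$, the two functions $\Fs_k$ and $\Fb_k$ are convex conjugates for the dual pairing $\langle\Uc',\Uc''\rangle$. Consequently $\lt_k\in\partial\Fb_k(\etat_k)$ is equivalent to $\etat_k\in\partial\Fs_k(\lt_k)$, which is exactly (ii).

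The delicate point, and the one I expect to be the main obstacle, is (iv): one must turn the fact that $\lt_k$ is close to the nonnegative plan $\pi$ into a \emph{quantitative} smallness of its negative part in the dual norm. The approach is first to control $\Ncs{q_k^-}$ by near-minimality, then transfer the bound to $\lt_k$ through the Banach lattice structure. For the first step, note that $\pi\in\Qcmn$ is a candidate with $\Fs_k(\pi)=\Ncs{\pi}\le\int_\s c\,d\pi=\inf\eqref{MK}$ (using $\pi\ge 0$ and the fact that $\Nc u\le 1$ forces $|u|\le c$), hence $\inf F_k\le\inf\eqref{MK}$; reading off $\Fs_k(q_k)=\Ncs{q_k^+}+k\Ncs{q_k^-}\le\inf F_k+1/k^2$ then gives $\Ncs{q_k^-}\le(\inf\eqref{MK}+1)/k$. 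For the transfer step, the norm $\Nc u=\sup_\s|u|/c$ is monotone in $|u|$, so $\Uc$ is a Banach lattice and $\Ncs{\cdot}$ is a lattice norm on the Riesz space $\Uc'$; the Riesz identity $|q_k^--\lt_k^-|\le|q_k-\lt_k|$ then yields $\Ncs{\lt_k^--q_k^-}\le\Ncs{\lt_k-q_k}\le 1/k$, and one triangle inequality produces $\Ncs{\lt_k^-}\le(\inf\eqref{MK}+2)/k$.

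The subtle calibration is the choice $\epsilon_k=1/k^2$: the Ekeland perturbation in Lemma \ref{res-71} only gives $O(\sqrt{\epsilon_k})=O(1/k)$ accuracy, which is precisely what is needed to match the $1/k$ rate demanded in (iv). A coarser choice such as $\epsilon_k=1/k$ would leak a term of order $1/\sqrt{k}$ and invalidate (iv), so this scheduling is the one genuinely creative ingredient of the proof; everything else is a clean concatenation of Lemmas \ref{res-70} and \ref{res-71}.
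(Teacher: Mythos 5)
Your proposal is correct and follows the same route as the paper: combine Lemma \ref{res-70} (producing $q_k\in M_{\epsilon_k}(F_k)$ with $\Ncs{q_k-\pi}$ small) with Lemma \ref{res-71} under the schedule $\epsilon_k=1/k^2$, and use mutual Fenchel conjugacy of $\Fs_k$ and $\Fb_k$ for the equivalence in (ii). The only place you deviate is in establishing (iv): you bound $\Ncs{q_k^-}$ directly from the $\epsilon_k$-minimality of $q_k$ and then transfer the estimate to $\lt_k^-$ through the lattice inequality $|q_k^--\lt_k^-|\le|q_k-\lt_k|$, whereas the paper bounds $\Fs_k(\lt_k)\le\Fs_k(q_k)+k\Ncs{\lt_k-q_k}$ via the $k$-Lipschitz property of $\Fs_k$ and then reads off $\Ncs{\lt_k^-}\le\Fs_k(\lt_k)/k$ from \eqref{eq-01}; both variants hinge on $\Ncs{\cdot}$ being a lattice norm on the Riesz space $\Uc'$ and yield the same constant, so the difference is purely cosmetic.
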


\newcommand{\ek}{{\epsilon_k}}

\begin{proof}
By Lemma \ref{res-70}, there exists a sequence $\seq q k$ such
that
\begin{equation}\label{eq-103}
    \Lim k \Ncs{q_k-\pi}=0
\end{equation}
and $q_k\in M_\ek(F_k)$ for each $k\ge1,$ where $\Lim k \ek=0.$
\\
By Lemma \ref{res-71}, for each $k\ge1,$ there exist
$\lt_k\in\Uc',$ $\etab\in\overline{\Sigma}_k$ and $\etat_k\in\Uc'$
such that $\Ncs{\lt_k-q_k}\le \sqrt{\ek},$
$\Nc{\etat_k-\etab_k}\le\sqrt{\ek}$ and
$\etat_k\in\partial\Fs_k(\lt_k).$
\\
Since $\Fs_k$ is $k$-Lipschitz, we have
\begin{eqnarray*}
  \Fs_k(\lt_k) &\le& \Fs_k(q_k)+k\Nc{\lt_k-q_k} \\
  &\le& \inf\eqref{MK} +\ek+k\sqrt{\ek} \\
  &\le&  \inf\eqref{MK} + 2
\end{eqnarray*}
taking
$$\ek=1/k^2.$$
It follows from (\ref{eq-01}) that
$\Ncs{\lt_k^-}\le\Fs_k(\lt_k)/k\le (\inf\eqref{MK} + 2)/k.$ One
concludes with $\Ncs{\lt_k-\pi}\le
\Ncs{\lt_k-q_k}+\Ncs{q_k-\pi}\le 1/k+\Ncs{q_k-\pi}$ and
(\ref{eq-103}).
\end{proof}

\begin{lemma}\label{res-67}
Let $\eta\in\Uc''$ be in the $\sigma(\Uc'',\Uc^{\prime})$-closure
of $\Sigma_k$ for some $k\ge1.$  For each probability measure
$p\in\Pc,$ there exists a function $\etah$ in $L_1(p)$ such that
\begin{equation}\label{eq-02}
\left\{\begin{array}{ll}
    (a) &  \langle \eta,h.p \rangle=\IAB h\etah\,dp,\quad \forall h\in L_\infty(p)\\
    (b) & \etah\in \cl_{L_1(p)}(\Sigma_k)
\end{array}\right.
\end{equation}
where $\cl_{L_1(p)}(\Sigma_k)$ is the closure of $\Sigma_k$ in
$L_1(p)$ with respect to its usual strong topology.
\end{lemma}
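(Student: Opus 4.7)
\textbf{Proof plan for Lemma \ref{res-67}.}

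The plan is to realize $\etah$ as a weak $L_1(p)$-limit of a net in $\Sigma_k$ that $\sigma(\Uc'',\Uc')$-approximates $\eta$, exploiting the fact that elements of $\Sigma_k$ are uniformly dominated by the $p$-integrable function $kc$.

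First I would check that each $h\in L_\infty(p)$ induces an element $h\cdot p\in\Uc'$. Since $p\in\Pc$ gives $\int c\,dp<\infty$ and every $u\in\Uc$ satisfies $|u|\le\Nc{u}\,c$ on $\s$ (by Proposition \ref{res-61}), one has
\begin{equation*}
\left|\int uh\,dp\right|\le\|h\|_{L_\infty(p)}\,\Nc{u}\,\int c\,dp,
\end{equation*}
so $h\cdot p$ defines a continuous linear functional on $\Uc$, with $\Ncs{h\cdot p}\le\|h\|_\infty\int c\,dp$.

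Next, by hypothesis there is a net $(u_\alpha)=(f_\alpha\oplus g_\alpha)\subset\Sigma_k$ with $\langle u_\alpha,\ell\rangle\to\langle\eta,\ell\rangle$ for every $\ell\in\Uc'$. From $-kc\le u_\alpha\le c$ pointwise on $\s$, the family $\{u_\alpha\}$ is dominated by the fixed function $kc\in L_1(p)$, so it is bounded in $L_1(p)$ and uniformly integrable. By the Dunford--Pettis theorem it is relatively weakly compact in $L_1(p)$, and I extract a subnet (still denoted $(u_\alpha)$) that converges weakly in $L_1(p)$ to some $\etah\in L_1(p)$; the bounds $-kc\le\etah\le c$ then pass to the weak limit $p$-a.e.\ because the set $\{u\in L_1(p):-kc\le u\le c\}$ is convex and norm-closed, hence weakly closed.

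Finally, for any $h\in L_\infty(p)$, weak $L_1(p)$-convergence gives $\int u_\alpha h\,dp\to\int\etah h\,dp$, while applying the $\sigma(\Uc'',\Uc')$-convergence to $\ell:=h\cdot p\in\Uc'$ gives $\int u_\alpha h\,dp=\langle u_\alpha,h\cdot p\rangle\to\langle\eta,h\cdot p\rangle$; identifying the two limits yields (a). For (b), I note that $\Sigma_k$ is convex (the constraints $-kc\le f\oplus g\le c$ are preserved under convex combinations), so its image in $L_1(p)$ is convex; since a weak closure point of a convex subset of a Banach space lies in its norm closure (Mazur), $\etah$ belongs to $\cl_{L_1(p)}(\Sigma_k)$.

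The main technical step is the Dunford--Pettis weak compactness, which hinges on the uniform domination $|u_\alpha|\le kc\in L_1(p)$. This is precisely why the truncation parameter $k$ (i.e.\ the lower bound $-kc$ in the definition of $\Sigma_k$) was introduced in the previous section: without it no single $p$-integrable majorant would be available for all of $\Gamma$, and the weak $L_1(p)$-compactness argument would break down.
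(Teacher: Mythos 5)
Your proof is correct, and it takes a genuinely different route from the paper's in the construction of $\etah$ (the verification of (b) via Mazur's theorem is the same in both). The paper does \emph{not} use the domination $|u_\alpha|\le kc$ at all in this lemma: instead it works directly with the limit object $\eta$, establishing the estimate $|\langle\eta,h.p\rangle|\le\|\eta\|_{\Uc''}\IAB|h|c\,dp$ for bounded measurable $h$, deducing by dominated convergence that $h\mapsto\langle\eta,h.p\rangle$ is $\sigma$-additive, and then invoking the Radon--Nikodym theorem to produce the density $\etah\in L_1(p)$; only afterwards does it take a net in $\Sigma_k$ to derive (b). You instead exploit the domination $|u_\alpha|\le kc$ together with $\IAB c\,dp<\infty$ to get uniform integrability, extract a weakly convergent subnet in $L_1(p)$ by Dunford--Pettis, and identify the two limits $\langle\eta,h.p\rangle$ and $\int\etah h\,dp$ term by term, so that (a) and (b) come out in one sweep. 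Both proofs require verifying that $h.p$ defines an element of $\Uc'$ with $\Ncs{h.p}\le\|h\|_\infty\IAB c\,dp$. The paper's route is perhaps slightly leaner because the $\Uc''$-norm bound on $\eta$ was already guaranteed by Proposition \ref{res-01}-(b') and feeds directly into Radon--Nikodym; your route has the pedagogical merit of making visible exactly why the truncation to $\Sigma_k$ (rather than $\Gamma$) is needed, namely to supply a single $p$-integrable majorant $kc$ for the whole approximating family.
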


\begin{proof}
For any bounded measurable function $h$ on $\AB$ we have
\begin{equation}\label{eq-98}
    |\langle\eta,h.p\rangle|\le\|\eta\|_{\Uc''}\IAB |h|c\,dp
\end{equation}
since $|\langle \eta,h.p\rangle|\le\|\eta\|_{\Uc''}\|h.p\|_{\Uc'}$
with $\|h.p\|_{\Uc'}=\IAB |h|c\,dp.$ Let $\seq hk$ be a sequence
of bounded measurable functions such that the sequence
$(|h_k|)_{k\ge1}$ decreases pointwise to zero. By (\ref{eq-98})
and dominated convergence, we have $\Lim k \langle
\eta,h_k.p\rangle=0.$ This means that $h\mapsto \langle
\eta,h.p\rangle$ is $\sigma$-additive: there exists a measure
$\rho$ such that $\langle \eta,h.p\rangle=\IAB h\,d\rho,$ $\forall
h.$ By (\ref{eq-98}), $\rho$ is absolutely continuous with respect
to $p:$ there exists $\etah$ in $L_1(p)$ such that (\ref{eq-02}-a)
holds.
\\
Let $\{u_\alpha\}_\alpha$ be a net in $\Sigma_k$ such that
$\lim_\alpha u_\alpha=\eta.$ For all $h$ in $L_\infty(p),$
\begin{equation*}
    \IAB h\etah\,dp=\langle\eta,h.p\rangle=\lim_\alpha \langle
u_\alpha,h.p\rangle=\lim_\alpha\IAB hu_\alpha\,dp.
\end{equation*}
This means that $\etah$ is in the
$\sigma(L_1(p),L_\infty(p))$-closure of $\Sigma_k.$ As $\Sigma_k$
is convex, this implies (\ref{eq-02}-b).
\end{proof}

We are now ready to give the proof of Theorem  \ref{res-73}.

\noindent\textbf{Theorem \ref{res-73}.} \textit{Let $\pi$ be any
optimal plan and let us take $\epsilon>0$ and $p\in\Pc.$ Then,
there exist $\varphi\in L_1(\pi+p),$ $f\in C_A,$ $g\in C_B$ and a
measurable subset $D\subset\AB$ such that
\begin{enumerate}
    \item
    $\varphi= c,\ \mathbf{1}_{(\s\setminus D)}.\pi$-almost
    everywhere;
    \item
    $\int_{D} (1+c)\,d\pi\le\epsilon;$
    \item
    $-c/\epsilon\le\varphi\le c,\ (\pi+p)$-almost everywhere;
    \item
    $-c/\epsilon\le f\oplus g\le c,$ everywhere;
    \item
    $\|\varphi-f\oplus g\|_{L_1(\pi+p)}\le\epsilon.$
\end{enumerate}}

\begin{proof}
By Lemma \ref{res-72}, for any $\epsilon>0,$ there exist $k$ large
enough, $\lt$ in $\Uc'$ and $\etat,\etab$ in $\Uc''$ such that
\begin{equation*}
\left\{\begin{array}{ll}
    \textrm{(i)}&\Ncs{\lt-\pi}\le\epsilon \textrm{ and }\Ncs{\lt^-}\le\epsilon \\
    \textrm{(ii)}&\etat\in\partial\Fs_k(\lt)\\
    \textrm{(iii)}&\etab\in\overline{\Sigma}_k\\
    \textrm{(iv)} &\Nc{\etat-\etab}\le \epsilon \\
\end{array}\right.
\end{equation*}
 Let $\widehat{\etat}$ and $\widehat{\etab}$ be the
functions in $L_1(\pi+p )$ which are built from $\etat$ and
$\etab$ as in Lemma \ref{res-67} and satisfy: $\langle
\etat,h.(\pi+p ) \rangle=\IAB h\widehat{\etat}\,d(\pi+p )$ and
$\langle \etab,h.(\pi+p ) \rangle=\IAB h\widehat{\etab}\,d(\pi+p
)$ for all $h\in L_\infty(\pi+p ).$ By (iii) and (\ref{eq-02}-b),
there exist $f\in C_A$ and $g\in C_B$ such that $-kc\le f\oplus
g\le c$ and
$$
\|\widehat{\etab}-f\oplus g\|_{L_1(\pi+p)}\le\epsilon.
$$
With the notation of Lemma \ref{res-67}, we have
$\|\etah\|_{L_1(p)}\le [\IAB c\,dp] \Nc\eta$ for all
$\eta\in\Uc''.$ Taking $\varphi=\widehat{\etat},$ this and (iv)
give $ \|\varphi-\widehat{\etab}\|_{L_1(\pi+p)}\le\epsilon\IAB
c\,d(\pi+p).$ Therefore,
$$
\|\varphi-f\oplus g\|_{L_1(\pi+p)}\le\left(1+\IAB
c\,d(\pi+p)\right)\epsilon.
$$
By (ii), we have $\etat\in \dom\Fb_k=\Sigma_k.$ Hence,
$-kc\le\varphi=\widehat{\etat}\le c,$ $(\pi+p)$-a.e.

Since $\Uo$ is identified with $\Cs$ which is a subspace of
$C_{\s},$ see (\ref{eq-03}), any $\ell\in\Uc'$ is the restriction
to $\Cs$ of a continuous linear form on $C_\s.$ Therefore it can
be uniquely decomposed as $\ell=\ell^a+\ell^s$ where $\ell^a$ is a
measure and $\ell^s$ is singular with
$\Ncs{\ell}=\Ncs{\ell^a}+\Ncs{\ell^s}.$ By (i), we have
$\Ncs{\lt^+-\pi}\le2\epsilon.$ Since $\pi$ is a measure, its
singular part vanishes: $\pi^s=0.$ It follows  that
$\Ncs{\lt^{+a}-\pi}\le2\epsilon$ where  $\lt^{+a}$ is the measure
part of $\lt^+.$ Therefore, the nonnegative measures $\lt^{+a}$
and $\pi$ are concentrated on the same set except for a measurable
set $D$ such that $\int_Dc\,d(\lt^{+a}+\pi)\le2\epsilon.$\
\\
    Finally, (ii) implies that
    $
    c=\varphi,
    $ $\lt^{+a}$-almost everywhere
(and also that $-kc=\varphi,$ $\lt^{-a}$-almost everywhere). One
completes the proof, putting everything together and replacing
$\epsilon$ by $\epsilon/C$ with an appropriate constant $C.$
\end{proof}


\end{document}